\newcommand{\calf}{\mathcal{F}}
\newcommand{\cala}{\mathcal{A}}
\newcommand{\calg}{\mathcal{G}}
\newcommand{\call}{\mathcal{L}}
\newcommand{\ocalg}{\overline{\mathcal{G}}}
\newcommand{\tcalg}{\widetilde {\mathcal{G}}}
\newcommand{\bx}{{\bf x}}
\newcommand{\by}{{\bf y}}
\newcommand{\compcent}[1]{\vcenter{\hbox{$#1\circ$}}} 
\newcommand{\comp}{\mathbin{\mathchoice 
{\compcent\scriptstyle}{\compcent\scriptstyle} 
{\compcent\scriptscriptstyle}{\compcent\scriptscriptstyle}}} 
\newtheorem{thm}{Theorem}[section]
\newtheorem{lem}[thm]{Lemma}
\newtheorem{cor}[thm]{Corollary}
\theoremstyle{defn}
\newtheorem{defn}[thm]{Definition}
\theoremstyle{remark}
\newtheorem{remark}[thm]{Remark}
\numberwithin{equation}{section}
\newcommand{\g}[1]{G_{#1}}
\newcommand{\gp}[1]{G'_{#1}}
\newcommand{\gi}[3]{G_{#1}, G_{#2}, \ldots, G_{#3}}
\newcommand{\gii}[3]{G'_{#1}, G'_{#2}, \ldots, G'_{#3}}
\newcommand{\gr}[3] {\calg_1 = (G_{#1}, G_{#2}, \ldots, G_{#3})}
\newcommand{\grr}[3] {\calg'_1 = (G'_{#1}, G'_{#2}, \dots, G'_{#3})}
\newcommand{\bij}[2] {\varphi ( P _{#1},  P_{#2})}
\newcommand{\ma}[2] {\match (\calg_{#1} \sqcup \calg_{#2})}
\newcommand{\re}[2] {\res_{\calg} ( \calg_{#1}, \calg_{#2})}
\newcommand{\gt}[4] {\graft_{{#1}, {#2}} ( \calg_{#3}, \calg_{#4})}
\newcommand{\la}[2] {\call ( \calg_{#1} \sqcup \calg_{#2}) }
\newcommand{\ta}[2]{(\tau_{i_{#1}}, \ldots, \tau_{i_{#2}})}
\newcommand{\taa}[2]{(\tau'_{i'_{#1}}, \ldots, \tau'_{i'_{#2}})}
\newcommand{\tp}{s>1,\ s'>1,\ t<d, \mbox{ and } t'<d' } 
\newcommand{\tpp}{s=1,\ s'=1,\ t=d, \mbox{ or } t'=d' } 
\newcommand{\prodd}{\displaystyle \prod_{\tau \in \partial \calg}} 
\newcommand{\pprod}[3]{\displaystyle \prod_{#1}^{#2} {#3}} 
\newcommand\restr[2]{{
  \left.\kern-\nulldelimiterspace 
  #1 
  \vphantom{\big|} 
  \right|_{#2} 
  }}
\newcommand{\e}{\delta}
\newcommand{\overunder}[2]{
\!\begin{array}{c}
\scriptstyle{#1}\\[-.1in]
-\!\!\!-\!\!\!-\\[-.1in]
\scriptstyle{#2}
\end{array}
\!
}
\def\sgn{\operatorname{sgn}}
\def\TT{\mathbb{T}}
\def\Trop{\operatorname{Trop}}
\def\QQ{\mathbb{Q}}
\newcommand{\za}{\alpha}
\newcommand{\zb}{\beta}
\newcommand{\zD}{\Delta}
\newcommand{\ze}{\epsilon}
\newcommand{\zg}{\gamma}
\DeclareMathOperator{\Match}{Match}
\newcommand{\A}{\mathcal{A}}
\newcommand{\PP}{\mathbb{P}}
\def\Aprin{\Acal_\bullet}
\def\Xcal{\mathcal{X}}
\def\Acal{\mathcal{A}}
\def\Fcal{\mathcal{F}}
\def\yy{\mathbf{y}}
\def\xx{\mathbf{x}}
\def\ZZ{\mathbb{Z}}
\DeclareMathOperator{\res}{Res \,} 
\DeclareMathOperator{\graft}{Graft \,  } 
\DeclareMathOperator{\match}{Match \, }
\DeclareMathOperator{\NE}{NE}
\DeclareMathOperator{\SW}{SW}
\begin{document}
\title{Snake graph calculus and cluster algebras from surfaces}
\author{Ilke Canakci}
\address{Department of Mathematics, University of Connecticut, 
Storrs, CT 06269-3009}
\email{ilke.canakci@uconn.edu}
\author{Ralf Schiffler}\thanks{The  authors were supported by the NSF grant  DMS-1001637 and by the University of Connecticut.}
\address{Department of Mathematics, University of Connecticut, 
Storrs, CT 06269-3009}
\email{schiffler@math.uconn.edu}





\begin{abstract}
Snake graphs appear naturally in the theory of cluster algebras. For cluster algebras from surfaces, each cluster variable is given by a   formula which is parametrized by the perfect matchings of a snake graph. 
 In this paper, we identify each cluster variable with its snake graph, and interpret relations among the cluster variables  in terms of these graphs. In particular, we give a new proof of skein relations of two cluster variables.
\end{abstract}


 \maketitle





\section{Introduction}
Cluster algebras were introduced in \cite{FZ1}, and further developed in \cite{FZ2,BFZ,FZ4}, motivated by combinatorial aspects of canonical bases in Lie theory \cite{Lusztig1,Lusztig2}. A cluster algebra is a subalgebra of a field of rational functions in several variables, and is given by constructing a distinguished set of generators, the \emph{cluster variables}. The cluster variables are constructed recursively and their computation is rather complicated in general. By construction cluster variables are rational functions, but it was shown in \cite{FZ1} that they are actually Laurent polynomials with integer coefficients. Moreover, these coefficients are conjectured to be positive; this is the positivity conjecture.

An important class of cluster algebras is given by the cluster algebras from surfaces \cite{GSV,FG1,FG2,FST,FT}. From a classification point of view, this class is very important since it has been shown in \cite{FeShTu} that almost all (skew-symmetric) mutation finite cluster algebras are from surfaces. For generalizations to the skew-symmetrizalbe case, see \cite{FeShTu2,FeShTu3}. If $\mathcal {A}$  is a cluster algebra from a surface, then there exists a marked surface with boundary, such that the cluster variables of $\mathcal{A}$ are in bijection with certain curves, called \emph{arcs}, in the surface and the relations between the cluster variables are given by the crossing patterns of the arcs in the surface.

In a collaboration with Musiker and Williams \cite{MSW}, building on earlier work \cite{S2,ST,S3,MS}, the second author used this  geometric interpretation to obtain a direct combinatorial formula for the cluster variables in cluster algebras from surfaces. This formula is manifestly positive and thus proves the positivity conjecture. In \cite{MSW2}, the formula was the key ingredient in the construction of two bases for the cluster algebra in the case where the surface has no punctures.

The formula is parametrized by perfect matchings of certain graphs, the \emph{snake graphs}, which are the subject of the present paper. Snake graphs had  appeared earlier in \cite{Propp} in the special case of triangulated polygons. To compute the cluster variable associated to an arc $\gamma$, one constructs the snake graph $\calg_{\gamma}$
as a union of square shaped graphs, called \emph{tiles}, which correspond to quadrilaterals in a fixed triangulation of the surface: one tile for  each quadrilateral traversed by $\gamma$. These tiles are glued together according the the geometry of the surface. Thus to every cluster variable corresponds an arc, and to every arc corresponds a snake graph. A natural question is then how much of the relations among the cluster variables or, equivalently, how much of the geometry of the arcs, can be recovered from the snake graphs alone? 

For example, the product of two cluster variables can be thought of as the union of the two corresponding snake graphs. It is known \cite{MW,MSW2} that if the two arcs cross, then one can rewrite the product of the cluster variables as a linear combination of elements corresponding to arcs without crossings. The process of resolving the crossings is given on the arcs by a \emph{smoothing operation} and the relation in the cluster algebra is called a \emph{skein relation}. On the level of snake graphs, one needs to know the following.
\begin{enumerate}
\item When do the two arcs corresponding to two snake graphs cross?
\item What are the snake graphs corresponding to the skein relations?
\end{enumerate}

In this paper, we introduce the notion of an \emph{abstract snake graph}, which is not necessarily related to an arc in a surface. Then we define what it means for two abstract snake graphs to cross. Given two crossing snake graphs, we construct the resolution of the crossing  as two pairs of snake graphs from the original pair of crossing snake graphs. We then prove that there is a bijection $\varphi$ between the set of perfect matchings of the two crossing snake graphs and the set of perfect matchings of the resolution, Theorem \ref{bijections}.

We then apply our constructions to snake graphs arising from unpunctured surfaces and prove that \begin{enumerate}
\item two snake graphs cross if and only if the corresponding arcs cross in the surface, Theorem \ref{lem 53};
\item the resolution of the crossing of the snake graphs coincides with the snake graphs of the curves obtained from the two crossing arcs by smoothing, Theorems \ref{smoothing1} and \ref{smoothing2};
\item the bijection $\varphi$ above is weight preserving, where the weights of the edges are the initial cluster variables associated to the arcs of the initial triangulation of the surface. Consequently, the Laurent polynomials associated to a pair of crossing snake graphs and to its resolution are identical, Theorems \ref{laurent} and \ref{laurent2}.
\end{enumerate}

As an application, we obtain a combinatorial formula in terms of snake graphs for the product of cluster variables, and a new proof of the skein relations for cluster variables, Corollary \ref{skein}.

The paper is organized as follows.
We introduce the notions of abstract snake graphs, as well as their crossings and resolutions in section \ref{sect 2}. In section \ref{sect 3}, we give the definition of the bijection $\varphi$. After recalling the definitions and results on cluster algebras from surfaces in section \ref{sect 4}, we prove our the results (1)-(3) in sections \ref{sect 5} and \ref{sect 6}. Section \ref{sect 7} contains the proof that the map $\varphi$ is a bijection.



\section{Abstract snake graphs}\label{sect 2}
Fix an orthonormal basis of the plane.

A {\bf tile} $G$ is a square of fixed side-length in the plane whose sides are parallel or orthogonal  to the fixed basis.
\begin{center}
  { \small
\begingroup%
  \makeatletter%
  \providecommand\color[2][]{%
    \errmessage{(Inkscape) Color is used for the text in Inkscape, but the package 'color.sty' is not loaded}%
    \renewcommand\color[2][]{}%
  }%
  \providecommand\transparent[1]{%
    \errmessage{(Inkscape) Transparency is used (non-zero) for the text in Inkscape, but the package 'transparent.sty' is not loaded}%
    \renewcommand\transparent[1]{}%
  }%
  \providecommand\rotatebox[2]{#2}%
  \ifx\svgwidth\undefined%
    \setlength{\unitlength}{68.59006958bp}%
    \ifx\svgscale\undefined%
      \relax%
    \else%
      \setlength{\unitlength}{\unitlength * \real{\svgscale}}%
    \fi%
  \else%
    \setlength{\unitlength}{\svgwidth}%
  \fi%
  \global\let\svgwidth\undefined%
  \global\let\svgscale\undefined%
  \makeatother%
  \begin{picture}(1,0.70549664)%
    \put(0,0){\includegraphics[width=\unitlength]{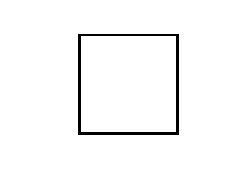}}%
    \put(0.49750979,0.31061177){\color[rgb]{0,0,0}\makebox(0,0)[lb]{\smash{$G$}}}%
    \put(0.00429408,0.31029854){\color[rgb]{0,0,0}\makebox(0,0)[lb]{\smash{West}}}%
    \put(0.79324631,0.31021311){\color[rgb]{0,0,0}\makebox(0,0)[lb]{\smash{East}}}%
    \put(0.36461283,0.61868105){\color[rgb]{0,0,0}\makebox(0,0)[lb]{\smash{North}}}%
    \put(0.36019916,0.00759722){\color[rgb]{0,0,0}\makebox(0,0)[lb]{\smash{South}}}%
  \end{picture}%
\endgroup%
}
\end{center}
We consider a tile $G$ as  a graph with four vertices and four edges in the obvious way. A {\em snake graph} $\calg$ is a connected graph consisting of a finite sequence of tiles $ \gi 12d$ with $d \geq 1,$ such that for each $i=1,\dots,d-1$

\begin{itemize}
 \item[(i)] $G_i$ and $G_{i+1}$ share exactly one edge $e_i$ and this edge is either the north edge of $G_i$ and the south edge of $G_{i+1}$ or the east edge of $G_i$ and the west edge of $G_{i+1}.$
 \item[(ii)] $G_i $ and $G_j$ have no edge in common whenever $|i-j| \geq 2.$
 \item[(ii)] $G_i $ and $G_j$ are disjoint whenever $|i-j| \geq 3.$
\end{itemize}
An example is given in Figure \ref{signfigure}.
\begin{figure}
\begin{center}
  {\tiny \scalebox{0.9}{ 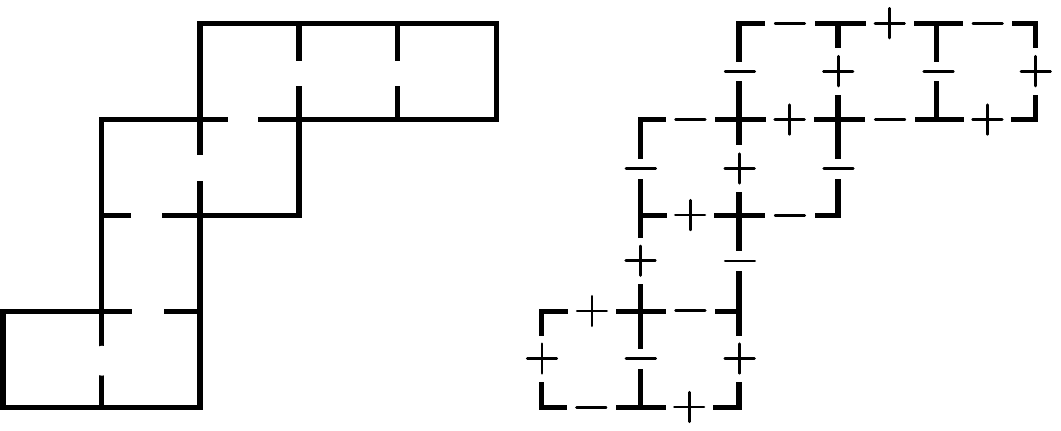}}
 \caption{A snake graph with 8 tiles and 7 interior edges (left);
 a sign function on the same snake graph (right)} 
 \label{signfigure}
\end{center}
\end{figure}

 We sometimes use the notation $\calg =(\gi 12d)$ for the snake graph and $\calg [i, i+t] = (\gi i{i+1}{i+t})$ for the subgraph of $\calg$ consisting of the tiles $\gi i{i+1}{i+t}.$

The $d-1$ edges $e_1,e_2, \dots, e_{d-1}$ which are contained in two tiles are called {\em interior edges} of $\calg$ and the other edges are called {\em boundary edges.} 

By convention we call a set containing a single edge an \emph{empty snake graph}.

A snake graph $\calg$ is called {\em straight} if all its tiles lie in one column or one row, and a snake graph is called {\em zigzag} if no three consecutive tiles are straight.
\subsection{Sign function} 

A {\em sign function} $f$ on a snake graph $\calg$ is a map $f$ from the set of edges of $\calg$ to $\{ +,- \}$ such that on every tile in $\calg$ the north and the west edge have the same sign, the south and the east edge have the same sign and the sign on the north edge is opposite to the sign on the south edge. See Figure \ref{signfigure} for an example.
%
%

Note that on every nonempty snake graph there are exactly two sign functions.
\subsection{Overlaps}

Let $\calg_1=(\gi 12d)$ and $\calg_2=(\gii 12{d'})$ be two snake graphs. We say that $\calg_1$ and $\calg_2$ have an {\em overlap} $\calg$ if $\calg$ is a snake graph and there exist two embeddings $i_1 : \calg \rightarrow \calg_1,$ $i_2: \calg \rightarrow \calg_2$ which are maximal in the following sense.

\begin{itemize}
\item[(i)] If $\calg$ has at least two tiles and if there exists a snake graph $\calg'$ with two embeddings $i'_1: \calg' \to \calg_1,$ $i'_2: \calg \to \calg_2$ such that $i_1(\calg) \subseteq i'_1(\calg')$ and $i_2(\calg) \subseteq i'_2(\calg')$ then $i_1(\calg) = i'_1(\calg')$ and $i_2(\calg) = i'_2(\calg').$
\item[(ii)] If $\calg$ consists of a single tile then using the notation $G_k=i_1(\calg)$ and $G'_{k'}= i_2(\calg),$ we have
\begin{itemize}
 \item[(a)] $k \in \{ 1,d \}$ or $k' \in \{ 1, d' \}$ or
 \item[(b)] $1<k<d,$ $1<k'<d'$ and the subgraphs $(G_{k-1},G_k,G_{k+1})$ and $(G'_{k'-1},G'_{k'},G'_{k'+1})$ are either both straight or both zigzag subgraphs.
\end{itemize}
\end{itemize}
An example of type (i) is shown in Figure \ref{overlap} and an example of type (ii)(b)   in Figure~\ref{overlapb}.

\begin{figure}
\begin{center}
\scalebox{0.8} {  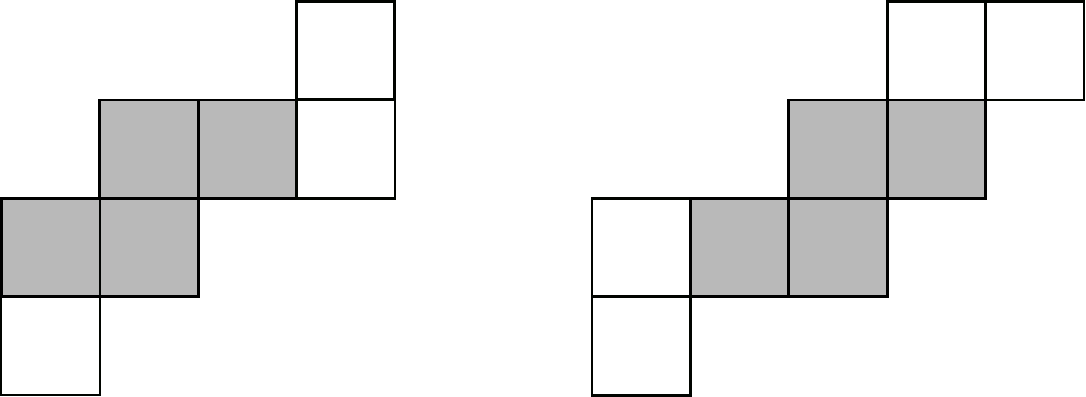}
 \caption{Two snake graphs with overlap (shaded)}
 \label{overlap}
 \end{center}
\end{figure}

\begin{figure}
\begin{center}
\scalebox{0.8} {  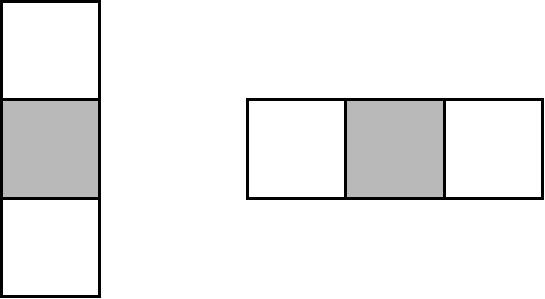}
 \caption{Two snake graphs with overlap consisting of a single tile (shaded)}
 \label{overlapb}
 \end{center}
\end{figure}

Note that two snake graphs may have several overlaps with respect to different snake graphs $\calg$.
\subsection{Crossing}

Let $\gr 12d,$ $\grr 12{d'}$ be two snake graphs with overlap $\calg$ and embeddings $i_1(\calg)=(\gi s{s+1}t)$ and $i_2(\calg)=(\gii {s'}{s'+1}{t'})$ and suppose without loss of generality that $s \leq t$ and $s' \leq t'.$ Let $e_1, \dots, e_{d-1} $ (respectively $e'_1, \dots, e'_{d'-1}$) be the interior edges of $\calg_1$ (respectively $\calg_2$.) Let $f$ be a sign function on $\calg.$ Then $f$ induces a sign function $f_1$ on $\calg_1$ and $f_2$
 on $\calg _2.$ Moreover, since the overlap $\calg$ is maximal, we have 
 
\[\begin{array}{rcll}
 f_1(e_{s-1}) &=&-f_2(e'_{s'-1}) & \mbox{ if }  s>1, s'>1\\
f_1(e_{t})&=&-f_2(e'_{t'})&\mbox{ if }t <d, t'< d'.
\end{array}
\]%
 
\begin{defn} \label{crossing} We say that $\calg_1$ and $\calg_2$ {\em cross in} $\calg$ if one of the following conditions hold.

\begin{itemize}
 \item[(i)] \label{i}
\[ \begin{array}{lrclll}
 &f_1(e_{s-1})& =& -f_1(e_t) & \mbox{ if } &s>1, t<d   \\
 \mbox{ or}\\
&  f_2 (e'_{s'-1})&=&-f_2(e'_{t'}) & \mbox{ if } &s'>1, t'<d'  
\end{array}\]
%
 \item[(ii)]  \[\begin{array}{lrcllcc}
 &f_1(e_{t}) &=& f_2(e'_{s'-1})& \mbox{ if } &s=1, t<d, s'>1, t'=d'  \\
  \mbox{ or}\\
&f_1 (e_{s-1})&=&f_2(e'_{t'}) & \mbox{ if } &s>1 , t=d, s'=1, t'<d' &
\end{array}\] 
\end{itemize}
\end{defn}

\begin{remark}
\begin{itemize}
 \item[1.] The definition does not depend on the choice of the sign function $f.$
 \item[2.] $\calg_1$ and $\calg_2$ may still cross if $s=1$ and $t=d$ because they may satisfy condition \ref{i}(i).
 \item[3.] The terminology `cross' comes from snake graphs that are associated to arcs in a surface. We shall show in  Theorem \ref{lem 53} that two such arcs cross if and only if the corresponding snake graphs cross in an overlap.
\end{itemize}
\end{remark}

\subsection {Resolution of crossing} Given two snake graphs that cross, we construct two pairs of new snake graphs, which we call the resolution of the crossing. In section~\ref{sect pm}, we show that there is a bijection between the set of perfect matchings of the two crossing snake graphs and the set of perfect matchings of the resolution. In sections~\ref{sect smoothing} and \ref{sect products}, we show that this construction is related to multiplication formulas given by Skein relations in cluster algebras.

 Let $\calg_1,$ $\calg_2$ be two snake graphs crossing in an overlap $\calg=\calg_1[s,t]=\calg_2[s',t']$. Recall that $\calg_k[i,j]$ is the subgraph of $\calg_k$ given by the tiles with indices $i, i+1, \dots, j.$ Let $\ocalg_k[j,i]$ be the snake graph obtained by reflecting $\calg_k[i,j]$ such that the order of the tiles is reversed.

We define four connected subgraphs as follows, see Figure \ref{figres} for examples.
\begin{align*}
 \calg_3&=\calg_1[1,t] \cup \calg_2[t'+1, d'] \mbox{ where the adjacency of the two subgraphs is induced by } \calg_2. \\
\calg_4&=\calg_2[1,t'] \cup \calg_1[t+1,d] \mbox{ where the adjacency of the two subgraphs is induced by } \calg_1.\\
 \calg_5 &=
\begin{cases}
\calg_1[1,s-1] \cup \ocalg_2[s'-1,1]&  \parbox[t]{.55\textwidth}{ if $s>1$, $s'>1$ where the two subgraphs are glued along the north of $G_{s-1}$ and the east of $G'_{s'-1}$ if $G_s$ is east of $G_{s-1}$ in $\calg_1$; and along the east of $G_{s-1}$ and the north of $G'_{s'-1}$ if $G_s$ is north of $G_{s-1}$ in $\calg_1$;}\\
 \calg_1[1,k] &     \parbox[t]{.55\textwidth}{ if $s'=1$ where $k < s-1$ is the largest integer such that $f_1(e_k)=f_1(e_{s-1})$ if such a $k$ exists; }  \\
  \{ e_0 \}&    \parbox[t]{.55\textwidth}{ if $s'=1$ and no such $k$ exists, where $e_0$ is the unique edge of $G_1$ which is south or west and satisfies $f_1(e_0)=f_1(e_{s-1})$; }\\
 \ocalg_2[k',1] &    \parbox[t]{.55\textwidth}{ if $s=1$ where $k' < s'-1$ is the largest integer such that $f_2(e_{k'}) = f_2(e_{s'-1})$ if such a $k$ exists; }\\   
 \{ e'_0 \}&   \parbox[t]{.55\textwidth}{ if $s=1$ and no such $k'$ exists, where $e'_0$ is the unique edge of $G'_1$ which is south or west and satisfies $f_2(e'_0)=f_2(e'_{s'-1})$; } 
\end{cases}\\ \end{align*}
\begin{align*}
\calg_6&=
\begin{cases}
\ocalg_2[d',t'+1] \cup \calg_1[t+1,d]&  \parbox[t]{.55\textwidth}{ if $t<d$, $t' <d'$ where the two subgraphs are glued along the west of $G_{t+1}$ and the south of $G'_{t'+1}$ if $G_{t+1}$ is north of $G_{t}$ in $\calg_1$; and along the south of $G_{t+1}$ and the west of $G'_{t'+1}$ if $G_{t+1}$ is east of $G_{t}$ in $\calg_1$;}\\
 \ocalg_2[d',k'] &     \parbox[t]{.55\textwidth}{ if $t=d$ where $k' > t'+1$ is the least integer such that $f_2(e'_{t'})=f_2(e'_{k'-1})$ if such a $k'$ exists; }  \\
  \{ e'_{d'} \}&    \parbox[t]{.55\textwidth}{ if $t=d$ and no such $k'$ exists, where $e'_{d'}$ is the unique edge of $G'_{d'}$ which is north or east and satisfies $f_2(e'_{d'})=f_2(e'_{t'})$; }\\
 \calg_1[k,d] &    \parbox[t]{.55\textwidth}{ if $t'=d'$ where $k > t+1$ is the least integer such that $f_1(e_{t}) = f_1(e_{k-1})$ if such a $k$ exists; }\\   
 \{ e_d \}&   \parbox[t]{.55\textwidth}{ if $t'=d'$ and no such $k$ exists, where $e_d$ is the unique edge of $G_d$ which is north or east and satisfies $f_1(e_d)=f_1(e_{t})$; } 
\end{cases}
\end{align*}

\begin{defn} \label{resolution}
 In the above situation, we say that the pair $(\calg_3 \sqcup \calg_4, \calg_5 \sqcup \calg_6 )$  is the {\em resolution of the crossing} of $\calg_1$ and $\calg_2$ at the overlap $\calg$ and we denote it by $\res_{\calg} (\calg_1,\calg_2).$ 
\end{defn}

If $\calg_1, \calg_2$ have no crossing in $\calg$ we let $\res_{\calg}  (\calg_1,\calg_2)=  \calg_1 \sqcup \calg_2. $

\begin{remark}
The pair $(\calg_3,\calg_4)$ still has an overlap in $\calg$ but without crossing. The pair  $(\calg_5,\calg_6)$ can be thought of as a reduced symmetric difference of $\calg_1$ and $\calg_2$ with respect to the overlap $\calg$.
\end{remark}

\begin{figure}
\begin{center}
\scalebox{0.8}{ 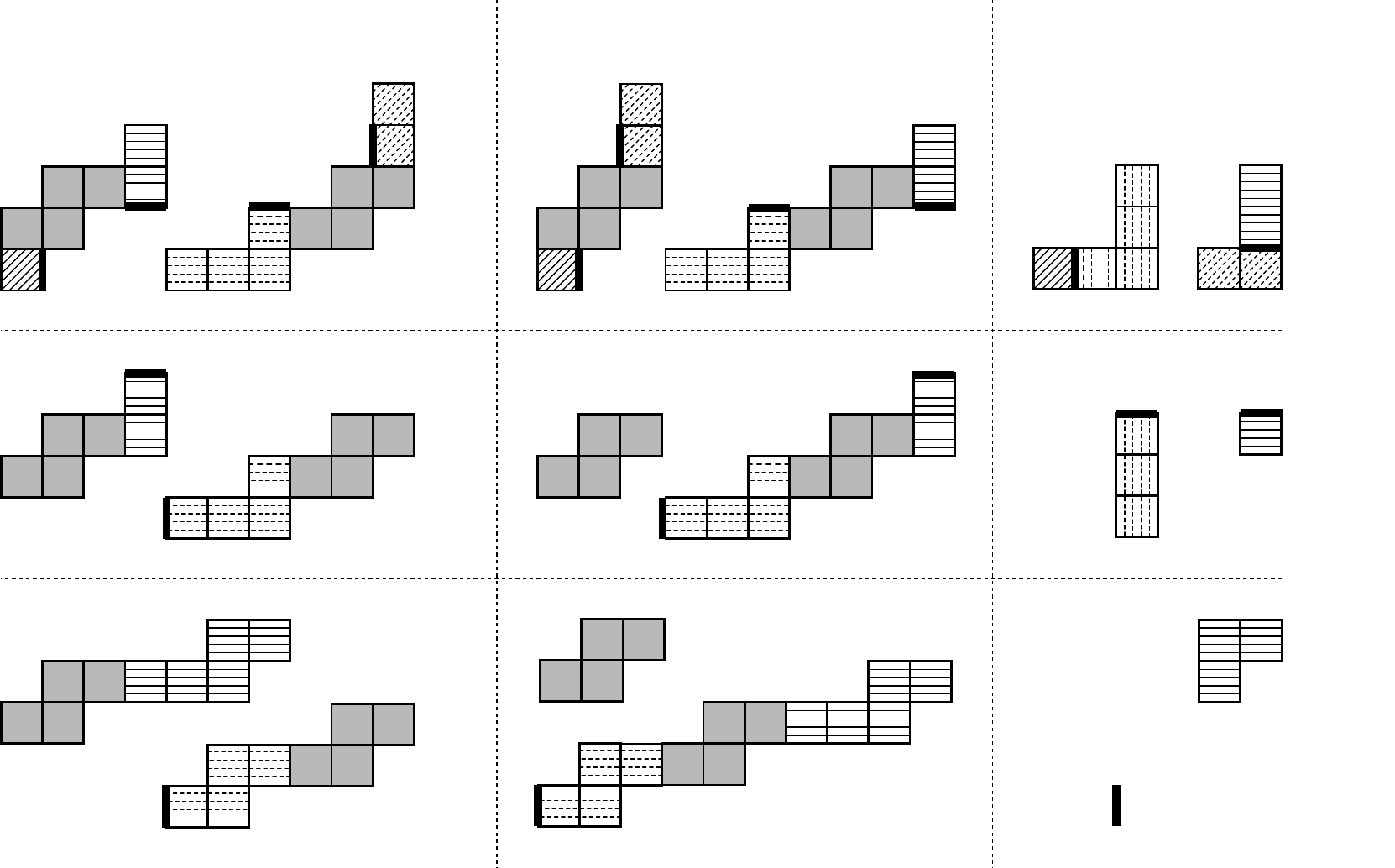}
\caption{Examples of resolutions: $s>1,s'>1,t<d,t'<d'$ in the first row;  $s=1, t'=d'$ in the second and third row;}
\label{figres}
\end{center}
\end{figure}

\subsection{Grafting} 
In this subsection, we define another operation which to two snake graphs associates two pairs of snake graphs. Here however, we do not suppose that the original two snake graphs have an overlap. In section~\ref{sect pm}, we show that there is a bijection between the set of perfect matchings of the two original snake graphs and the set of perfect matchings of the two new pairs. In sections~\ref{sect smoothing} and \ref{sect products}, we show that this construction is related to multiplication formulas in cluster algebras.

Let $\calg_1=(\gi 12d),$ $\calg_2=(\gii 12{d'})$ be two snake graphs and let $f_1$ be a sign function on $\calg_1.$

\emph{Case 1.}
Let $s$ be such that $1<s<d.$

If $G_{s+1}$ is north of $G_s$ in $\calg_1$ then let $\e_3$ denote the east edge of $G_s,$  $\e_5$ the west edge of $G_{s+1}$ and $\e'_3$ the west edge of $G'_1,$ $\e'_5$ the south edge of $G'_1.$

If $G_{s+1}$ is east of $G_s$ in $\calg_1,$ then let 
$\e_3 $  denote the north edge of $G_s $,
$\e_5$   the south edge of $G_{s+1}$ and
$\e_{3'}$   the south edge of  $ G'_1 $, 
$\e_{5'} $   the west edge of $ G'_1  $. 
Thus we have one of the following two situations.
\begin{center}
\scalebox{0.8}{
\begingroup%
  \makeatletter%
  \providecommand\color[2][]{%
    \errmessage{(Inkscape) Color is used for the text in Inkscape, but the package 'color.sty' is not loaded}%
    \renewcommand\color[2][]{}%
  }%
  \providecommand\transparent[1]{%
    \errmessage{(Inkscape) Transparency is used (non-zero) for the text in Inkscape, but the package 'transparent.sty' is not loaded}%
    \renewcommand\transparent[1]{}%
  }%
  \providecommand\rotatebox[2]{#2}%
  \ifx\svgwidth\undefined%
    \setlength{\unitlength}{312.66984863bp}%
    \ifx\svgscale\undefined%
      \relax%
    \else%
      \setlength{\unitlength}{\unitlength * \real{\svgscale}}%
    \fi%
  \else%
    \setlength{\unitlength}{\svgwidth}%
  \fi%
  \global\let\svgwidth\undefined%
  \global\let\svgscale\undefined%
  \makeatother%
  \begin{picture}(1,0.22787615)%
    \put(0,0){\includegraphics[width=\unitlength]{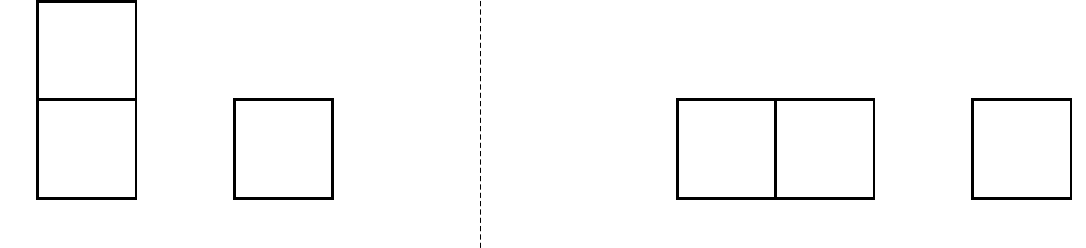}}%
    \put(0.92214271,0.0802294){\color[rgb]{0,0,0}\makebox(0,0)[lb]{\smash{$G_1'$}}}%
    \put(0.92579783,0.01434845){\color[rgb]{0,0,0}\makebox(0,0)[lb]{\smash{$\e_3'$}}}%
    \put(0.86247545,0.0802294){\color[rgb]{0,0,0}\makebox(0,0)[lb]{\smash{$\e_5'$}}}%
    \put(0.7268886,0.0802294){\color[rgb]{0,0,0}\makebox(0,0)[lb]{\smash{$G_{s+1}$}}}%
    \put(0.74740947,0.01458317){\color[rgb]{0,0,0}\makebox(0,0)[lb]{\smash{$\e_5$}}}%
    \put(0.64924712,0.0802294){\color[rgb]{0,0,0}\makebox(0,0)[lb]{\smash{$G_s$}}}%
    \put(0.65436434,0.14611037){\color[rgb]{0,0,0}\makebox(0,0)[lb]{\smash{$\e_3$}}}%
    \put(0.06168075,0.08105505){\color[rgb]{0,0,0}\makebox(0,0)[lb]{\smash{$G_s$}}}%
    \put(0.04885712,0.17160831){\color[rgb]{0,0,0}\makebox(0,0)[lb]{\smash{$G_{s+1}$}}}%
    \put(0.13167701,0.08105505){\color[rgb]{0,0,0}\makebox(0,0)[lb]{\smash{$\e_3$}}}%
    \put(-0.00006996,0.17084109){\color[rgb]{0,0,0}\makebox(0,0)[lb]{\smash{$\e_5$}}}%
    \put(0.2484092,0.01443661){\color[rgb]{0,0,0}\makebox(0,0)[lb]{\smash{$\e_5'$}}}%
    \put(0.18197352,0.08105505){\color[rgb]{0,0,0}\makebox(0,0)[lb]{\smash{$\e_3'$}}}%
    \put(0.24228363,0.08105505){\color[rgb]{0,0,0}\makebox(0,0)[lb]{\smash{$G_1'$}}}%
  \end{picture}%
\endgroup%
}
\end{center}
Define four snake graphs as follows; see Figure \ref{figrafting} for examples.
\begin{align*}
 \calg_3=&\calg_1[1,s] \cup \calg_2 \mbox{ where the  two subgraphs are glued along the edges $\e_3$ and $\e'_3$}. \\
\calg_4=&
\begin{cases}
 \calg_1 [k_4,d] &  \parbox[t]{.75\textwidth}{ where $k_4 > s+1$ is the least integer such that $f_1(e_s)=-f_1(e_{k_4-1})$ if such a $k_4$ exists; } \\
 \{ e_d \} &  \parbox[t]{.75\textwidth}{ otherwise where $e_d$ is the unique edge of $G_d$ which is north or east and such that $f_1(e_s)=-f_1(e_d)$ }
\end{cases}
\\
 \calg_5 =&
\begin{cases}
\calg_1 [1,k_5] &  \parbox[t]{.75\textwidth}{ where $k_5 < s$ is the largest integer such that $f_1(e_{k_5})=-f_1(e_{s})$ if such a $k_5$ exists; } \\
 \{ e_0 \} &  \parbox[t]{.75\textwidth}{ otherwise where $e_0$ is the unique edge of $G_1$ which is south or west and such that $f_1(e_0)=-f_1(e_s)$ }
\end{cases}\\
 \calg_6=&\ocalg_2[d',1] \cup \calg_1 [s+1,d] \mbox{ where the two subgraphs are glued along the edges } & \\ &\mbox{ $\e_5$ and $\e'_5$ .} &
\end{align*}

\emph{Case 2.} Now let $s=d.$ Choose a pair of edges $(\e_3,\e'_3)$ such that either $\e_3$ is the north edge in $G_s$ and $\e'_3$ is the south edge in $G'_1$ or $\e_3$ is the east edge in $G_s$ and $\e'_3$ is the west edge in $G'_1.$ Let $f_2$ be  a sign function on $ \calg_2$ such that $f_2(\e'_3)=f_1(\e_3).$ Then define four snake graphs as follows.

\begin{align*}
 \calg_3=&\calg_1[1,s] \cup \calg_2 \mbox{ where the  two subgraphs are glued along the edges $\e_3$ and $\e'_3$.} &\\
\calg_4=& \{ \e_3 \}
\\
 \calg_5 =& 
\begin{cases}
\calg_1 [1,k_5] &  \parbox[t]{.75\textwidth}{ where $k_5 < s$ is the largest integer such that $f_1(e_{k_5})=f_1(\e_{3})$, if such a $k_5$ exists; }\\
 \{ e_0 \} &  \parbox[t]{.75\textwidth}{ otherwise, where $e_0$ is the unique edge of $G_1$ which is south or west and such that $f_1(e_0)=f_1(\e_3)$ }
\end{cases}\\
 \calg_6=&
 \begin{cases}
\ocalg_2 [d',k_6] &  \parbox[t]{.75\textwidth}{ where $k_6>1$ is the least integer such that $f_2(e'_{k_6-1})=f_1(\e_{3})$, if such a $k_6$ exists; } \\
 \{ e_0 \} &  \parbox[t]{.75\textwidth}{ otherwise, where $e'_{d'}$ is the unique edge of $G'_{d'}$ which is south or west and such that $f_1(e'_{d'})=f_1(\e_3)$. }
\end{cases}
\end{align*}

\begin{defn} \label {grafting}
 In the above situation, we say that the pair $(\calg_3 \sqcup \calg_4, \calg_5 \sqcup \calg_6 )$ is called the {\em resolution of the grafting of $\calg_2$ on $\calg_1$ in $\calg_s$ }and we denote it by $\graft_{s,\e_3} (\calg_1,\calg_2).$ 
\end{defn}

\begin{figure}
\begin{center}
\scalebox{0.8}{ 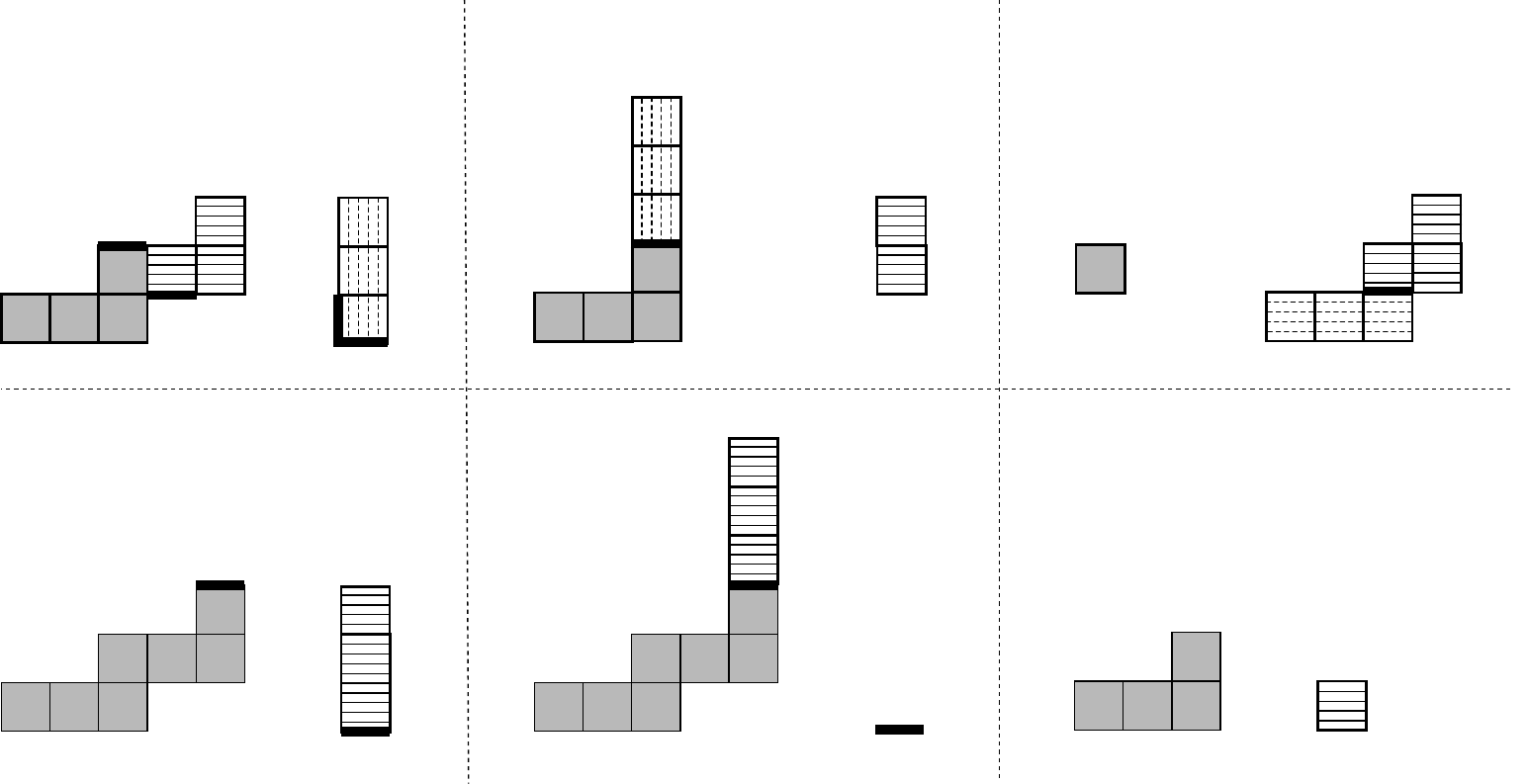}
\caption{Examples of resolutions of graftings: $1<s<d$ in the first row;  $s=d'$ in the second row.}
\label{figrafting}
\end{center}
\end{figure}

\section{Perfect Matchings}\label{sect pm}\label{sect 3}

Recall that a {\em perfect matching} $ P$ of a graph $G$ is a subset of the set of edges of $G$ such that each vertex of $G$ is incident to exactly one edge in $ P.$ Let $\match (G)$ denote the set of all perfect matchings of the graph $G$ .

The main result of this section is the following.

\begin{thm} \label {bijections} Let $\calg_1, \calg_2$ be two snake graphs. Then there are bijections
\begin{itemize}
 \item[(1)] $\match (\calg_1 \sqcup \calg_2) \xrightarrow {\varphi =(\varphi_{34}, \varphi_{56})}  \match( \res_{\calg} (\calg_1 \sqcup \calg_2))$
 \item[(2)] $\match (\calg_1 \sqcup \calg_2) \xrightarrow {\varphi =(\varphi_{34}, \varphi_{56})}   \match( \graft_{s, e_3} (\calg_1 \sqcup \calg_2))$
\end{itemize}
\end{thm}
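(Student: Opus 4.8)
The plan is to prove both statements by producing an explicit inverse of $\varphi$ and checking that the two maps are mutually inverse. Since $\varphi=(\varphi_{34},\varphi_{56})$ is a local cut-and-paste of the two matchings at the overlap (resp.\ at the grafting tile), the whole argument reduces to a finite local analysis at the cut edges. A perfect matching of $\calg_1\sqcup\calg_2$ is a pair $(P_1,P_2)$ with $P_i\in\match(\calg_i)$, and $\varphi$ leaves $P_1,P_2$ untouched away from small neighborhoods of the overlap tiles $G_t=G'_{t'}$ and $G_s=G'_{s'}$, only reorganizing the edges incident to the cut edges $e_t,e'_{t'}$ and $e_{s-1},e'_{s'-1}$. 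So the first step is to record, at each cut, the finitely many local states of $(P_1,P_2)$ and how $\varphi$ acts on each; when $\calg_1,\calg_2$ do not cross the map is the identity and there is nothing to prove.

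First I would verify that $\varphi$ is well defined, that is, that its output is a genuine perfect matching of $\calg_3\sqcup\calg_4$ or of $\calg_5\sqcup\calg_6$. Away from the cut edges this is immediate, because there the edge sets of $\calg_3,\dots,\calg_6$ coincide with those of $\calg_1,\calg_2$, so every vertex is still covered exactly once. At the cut one must check that exactly one reglued edge covers each vertex of the former interior edge, and here two facts enter: the maximality of the overlap, which forces $f_1(e_t)=-f_2(e'_{t'})$ and $f_1(e_{s-1})=-f_2(e'_{s'-1})$ and makes the gluing of the pieces geometrically coherent; and the crossing condition of Definition~\ref{crossing}, which fixes the relative signs $f_1(e_{s-1}),f_1(e_t)$ and thereby selects, for each $(P_1,P_2)$, both the branch ($\varphi_{34}$ into $\calg_3\sqcup\calg_4$ or $\varphi_{56}$ into $\calg_5\sqcup\calg_6$) and a unique consistent distribution of the boundary vertices at the cut.

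Next I would construct the candidate inverse $\psi$ by the reverse cut-and-paste: a matching of $\calg_3\sqcup\calg_4$ (resp.\ of $\calg_5\sqcup\calg_6$) is cut at the images of $e_t,e'_{t'}$ and reassembled into a pair of matchings of $\calg_1$ and $\calg_2$, and I would check that $\psi$ is well defined by the same local sign analysis. That $\psi\circ\varphi$ and $\varphi\circ\psi$ are the identity then follows because both maps act as the identity outside the cut and the local reglueing is manifestly reversible: the sign function determines uniquely how to reconnect, so cutting and regluing twice returns the original configuration. This simultaneously shows that $\match(\calg_3\sqcup\calg_4)$ and $\match(\calg_5\sqcup\calg_6)$ are disjoint and together cover the whole target, i.e.\ that the splitting of $\match(\calg_1\sqcup\calg_2)$ into the domains of $\varphi_{34}$ and $\varphi_{56}$ is exactly the preimage decomposition.

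The main obstacle, and where nearly all of the work lies, is the case analysis built into the definitions of $\calg_5,\calg_6$ and of the grafting output: one must separately treat $s=1$ versus $s>1$ and $t=d$ versus $t<d$ together with their primed analogues, and within each of these the subcases according to whether the integers $k,k',k_4,k_5,k_6$ exist, where the relevant piece degenerates to a single edge $\{e_0\}$, $\{e'_0\}$, $\{e_d\}$, or $\{e'_{d'}\}$. In every degenerate case one must check that the single-edge snake graph, which carries a unique perfect matching, correctly absorbs the corresponding matchings of $\calg_1\sqcup\calg_2$. The uniform principle that tames all branches is that the reglueing is governed entirely by the sign of the last (or first) interior edge of the relevant piece, so each branch is settled by the same local computation; this is also what makes the argument independent of the chosen sign function. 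Statement (2) is handled in complete parallel, viewing the grafting as the cut of $\calg_1$ at the tile $G_s$ equipped with the two prescribed ways of inserting $\calg_2$, so that the identical well-definedness and invertibility arguments apply.
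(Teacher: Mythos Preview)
Your proposal rests on a mischaracterization of $\varphi$. You describe it as a local cut-and-paste that ``leaves $P_1,P_2$ untouched away from small neighborhoods of the overlap tiles $G_t=G'_{t'}$ and $G_s=G'_{s'}$, only reorganizing the edges incident to the cut edges $e_t,e'_{t'}$ and $e_{s-1},e'_{s'-1}$.'' That is not how $\varphi$ is defined. The map scans the \emph{entire} overlap: first it tests the eight $\mu$-configurations at the left end $s$; if none matches, it searches for the least interior index $j$ where one of the four $\rho$-configurations applies; failing that, it tests $\mu$ at the right end $t$; only if all of these fail does it send $(P_1,P_2)$ to $\match(\calg_5\sqcup\calg_6)$. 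The switching point is thus a function of $(P_1,P_2)$ and can be any tile of the overlap, so a purely local analysis at $e_{s-1}$ and $e_t$ cannot capture $\varphi$.

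This is not a cosmetic issue. The naive recipe $P_3=P_1[1,t]\cup P_2(t',d']$ is generally \emph{not} a perfect matching of $\calg_3$: the two restrictions need not agree at the gluing edge, and the vertices there may be doubly covered or uncovered. The whole point of the scan is to locate a tile where $P_1$ and $P_2$ are compatible enough to switch from one to the other and still get a matching. Correspondingly, the branch selection $\varphi_{34}$ versus $\varphi_{56}$ is not determined by the crossing condition on the snake graphs; it is determined by whether \emph{any} switching point exists for the particular pair $(P_1,P_2)$. The crossing hypothesis enters instead through a nontrivial lemma (the paper's Lemma~\ref{psi}): if neither $\mu$ nor $\rho$ applies anywhere from $s$ up to $t-1$, then the sign equation $f_1(e_{s-1})=-f_1(e_t)$ forces $\mu$ to apply at $t$, so that $\varphi_{34}$ is always defined on $(P_3,P_4)$-side when building $\psi$. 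Your outline does not contain this mechanism, and without it one cannot show that $\psi$ lands in $\match(\calg_1\sqcup\calg_2)$ or that the images of $\varphi_{34}$ and $\varphi_{56}$ exhaust the target. For the $\varphi_{56}$ branch there is a further ingredient you are missing: the inverse uses a splitting operation $\tau$ on $P_5,P_6$ together with a \emph{unique boundary-edge completion} $(\eta_1,\eta_2)$ on the overlap, and one must prove (Lemmas~\ref{rho},~\ref{rho2},~\ref{eta}) that this completion exists, is unique, and is complementary on $\calg$.
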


\begin{proof}
We shall explicitly construct the bijection $\varphi$.
The proof that $\varphi  $ is a bijection is given in section \ref{sect 7}.

The idea for the bijection $\varphi$ is simple: Look for the first place in the overlap where one can `switch'  the matchings of $\calg_1 $ and $\calg_2$ in order to get a matching of $\calg_3\sqcup \calg_4$. If the given matching does not have such a  switching place at all, then its image under $\varphi $ is a matching of $\calg_5\sqcup \calg_6$.

If $ P$ is a matching of the snake graph $\calg,$ we denote by $ P[j_1,j_2]$ its restriction to the subgraph $\calg [j_1,j_2],$ and we define 
\begin{align*}
P (j_1,j_2]&= P [j_1,j_2] \backslash \{ e_{j_1-1} \}, \\
P[j_1,j_2) &=  P [j_1,j_2] \backslash \{ e_{j_2} \}.
\end{align*}

We first define $\varphi$ for statement (1). The statement is trivial if $\calg_1$ and $\calg_2$ do not have a crossing in $\calg.$ Suppose therefore $\calg_1$ and $\calg_2$ cross in $\calg$ and use the notation of the   Definition \ref{resolution} and Definition \ref{grafting}.
 Let $ P_i \in \match \calg_i.$

We define the map $\varphi$ by the following procedure. 
Suppose first that $s, s' \neq 1, t \neq d, t' \neq d'$ and $s \neq t.$
\begin{enumerate}
 \item[(i)] \label{?} If the pair of matchings $( P_1[s-1,s+1], \  P_2[s'-1,s'+1])$ on the pair of subgraphs $(\calg_1[s-1,s+1], \calg_2[s'-1,s'+1])$ is one of the eight configurations on the left in Figure \ref{mu} then let $\varphi ( P_1, P_2)$ be
\begin{align*}
(  P_1[1,{s-1}) \cup  \mu_{s,1} \cup  P_2(s'+1, d'], \  P_2[1,{s'-1}) \cup  \mu_{s,2} \cup  P_1(s+1, d]) 
\end{align*}
 \item[(ii)] \label{??} If  (i) does not apply, let $j$ be the least integer such that $1 < j < t-s-2$ and the local configuration of $( P_1, P_2)$ on $(\calg_1[s+j,s+j+1], \calg_2[s'+j, s'+j+1])$ is one of the four configurations shown in Figure \ref{figrho}, if such $j$ exists, and let $\varphi ( P_1, P_2)$ be 
\begin{align*}
 (  P_1[1,{s+j-1}) \cup  \rho_{j,1} \cup  P_2(s'+j+2, d'],  \\P_2[1,{s'+j-1}) \cup  \rho_{j,2} \cup  P_1(s+j+2, d])
\end{align*}
\item[(iii)] \label{???} If (i) and (ii) do not apply and the local configuration of $( P_1, P_2)$ on $(\calg_1[t-1,t+1], \ \calg_2[t'-1, t'+1])$ is one of the eight shown in Figure \ref{mu}, relabeling $s=t, s-1=t+1, s+1=t-1, s'=t', s'-1=t'+1, s'+1=t'-1,$ let $\varphi ( P_1, P_2)$ be
\begin{align*}
(  P_1[1,t-1) \cup  \mu_{t,1} \cup  P_2(t'+1, d'],\  P_2[1,t'-1) \cup  \mu_{t,2} \cup  P_1(t+1, d]) 
\end{align*}
\item[(iv)] \label{????} If (i)-(iii) do not apply then let $a_5$ (respectively $a_6$) be the glueing edge in the definition of $\calg_5$ (respecetively $\calg_6$) in Definition \ref{resolution}. Then let $\varphi ( P_1, P_2)$ be
\begin{align*}
(  P_1[1,s-1] \sqcup   P_2 [1, s'-1]\backslash \{a_5\},  P_2[t'+1,d'] \sqcup   P_1[t+2,d] \backslash \{a_6\}) 
\end{align*}
where the notation $A \sqcup B \backslash \{a\}$ means
\begin{align*}
& A \cup B \backslash \{a\} &\mbox{  if  } a \not \in A \cap B\\
& A \cup B & \mbox{  if  } a  \in A \cap B
\end{align*}
\end{enumerate}

\begin{figure}
\begin{center}
 \scalebox{0.8}{ 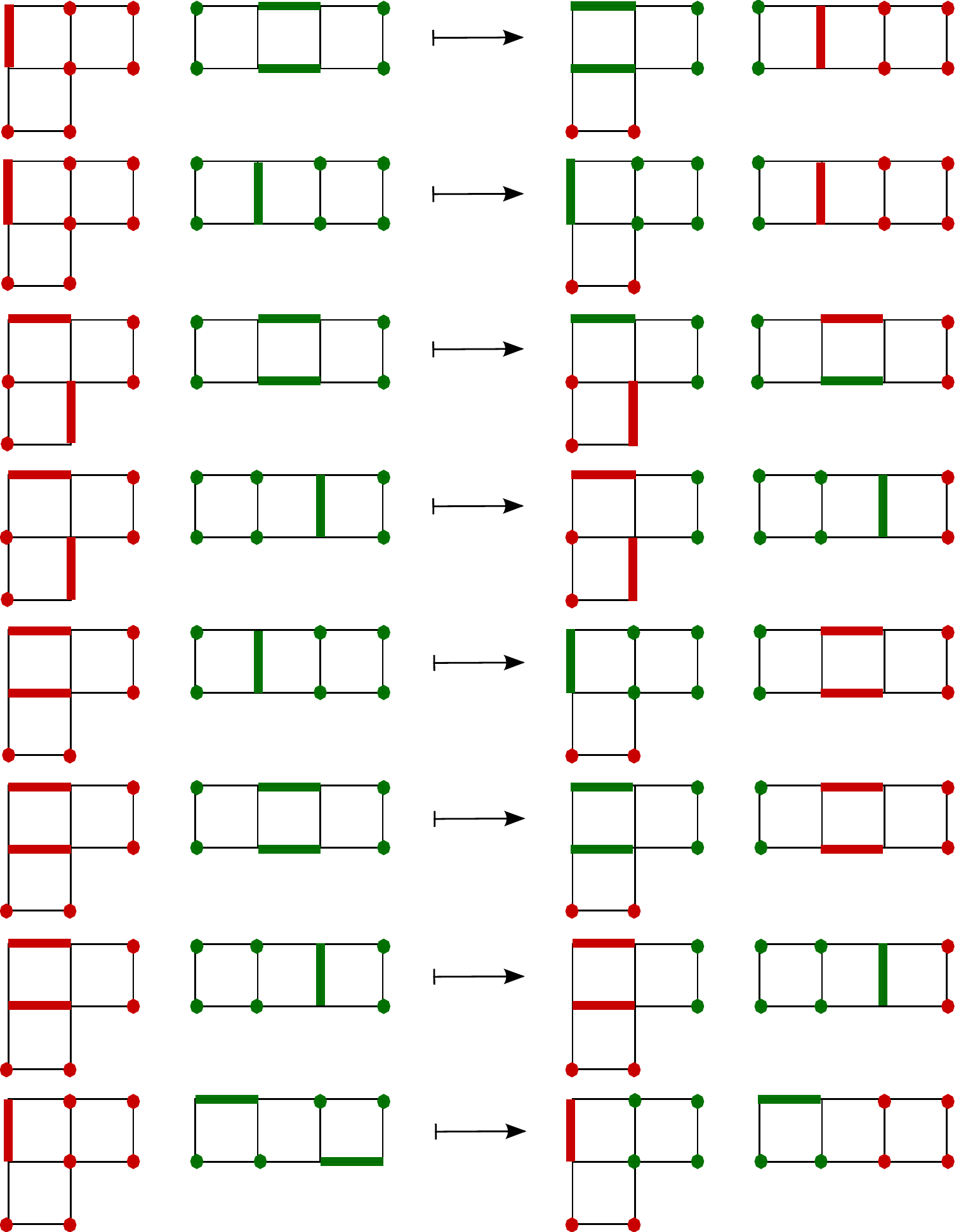}
 \caption{The operation $\mu$ applies in the 8 cases for $(P_1[s-1,s+1],P_2[s'-1,s'+1])$ shown on the left. Colored edges must belong to the matchings, colored vertices can be matched arbitrarily. The resulting pairs $(\mu_{s,1},\mu_{s,2})$ are shown on the right. Colors indicate whether the edges belong to $P_1$ or $P_2$.}
 \label{mu}
\end{center}
\end{figure}

\begin{figure} 
\begin{center}
 \scalebox{0.8}
 { 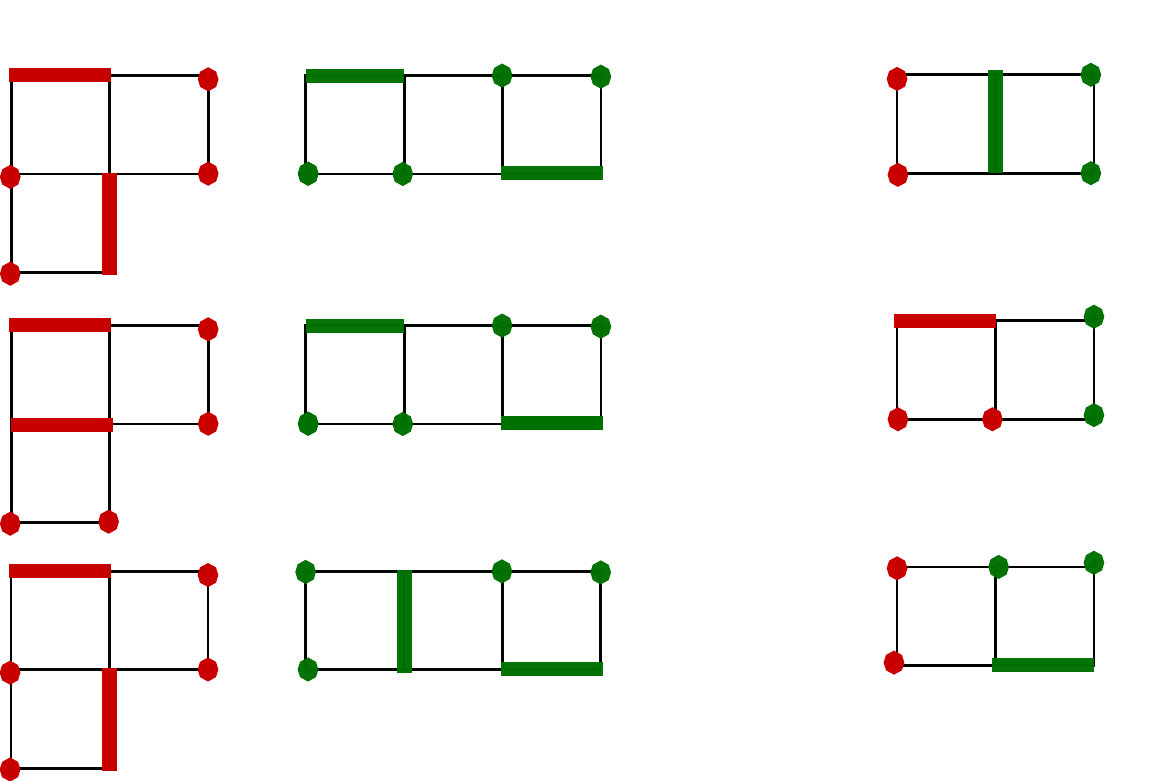}
 \caption{The 3 cases in which the operation $\mu$ does not apply}
 \label{mu2}
\end{center}
\end{figure}

\begin{figure}
\begin{center}
 \scalebox{0.8}
 { 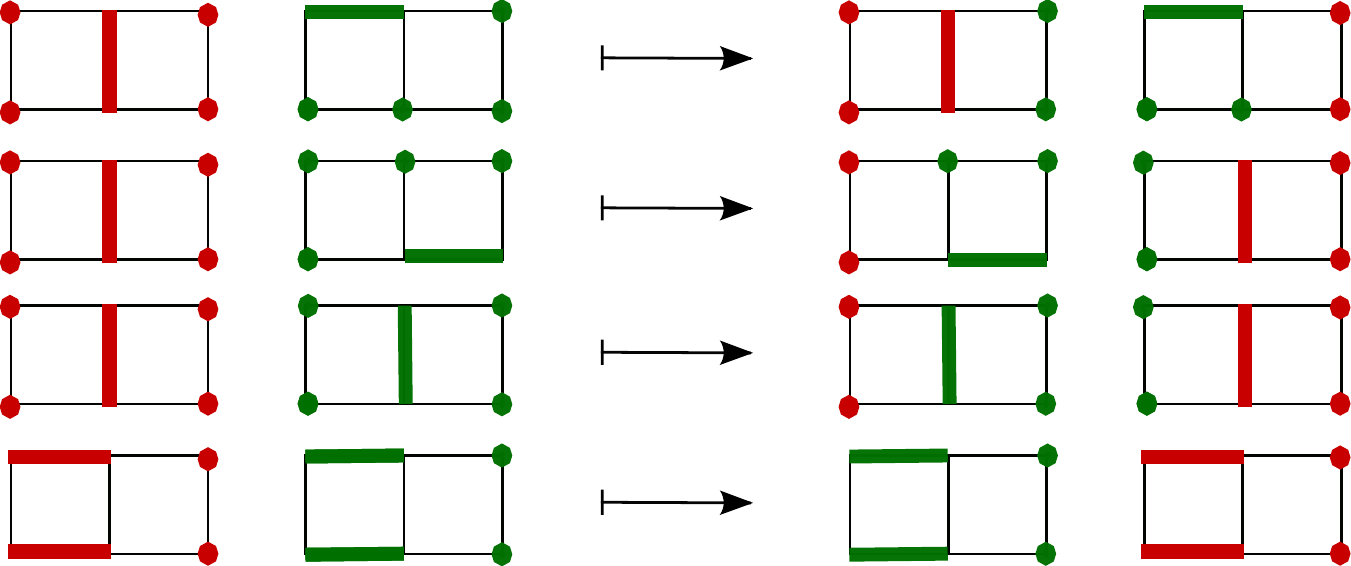}
 \caption{The operation $\rho$ applies in the 4 cases shown on the left. The resulting pairs $(\rho_{i,1},\rho_{i,2})$ are shown on the right.}
 \label{figrho}
\end{center}
\end{figure}

\begin{figure} 
\begin{center}
 \scalebox{0.8}
 { 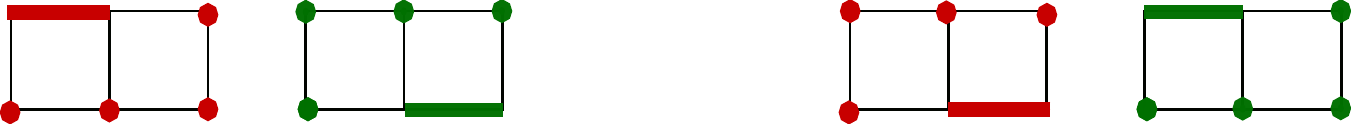}
 \caption{The 2 cases in which the operation $\rho$ does not apply}
 \label{figrho2}
\end{center}
\end{figure}

Note that in cases (i)-(iii), $\bij 12 \in \ma 34$ and in case   (iv), $\bij 12 \in \ma 56.$

If $s\neq t$ and, $\tpp$ respectively, we define $\bij 12$ by following the steps (i)-(iv) ignoring tiles $\g {s-1}, \gp {s'-1}, \g {t+1} $ or $\gp {t'+1}$ respectively, and restricting to $\calg_5 \cup \calg_6$   if step (iv) has been applied.

Finally if $s=t$ then $\varphi$ is defined by step (i) replacing the operation $\mu$ by the operation $\nu$ in Figure \ref{nu} and step (iv) only.

\begin{figure}\begin{center}
 \scalebox{0.70}{{  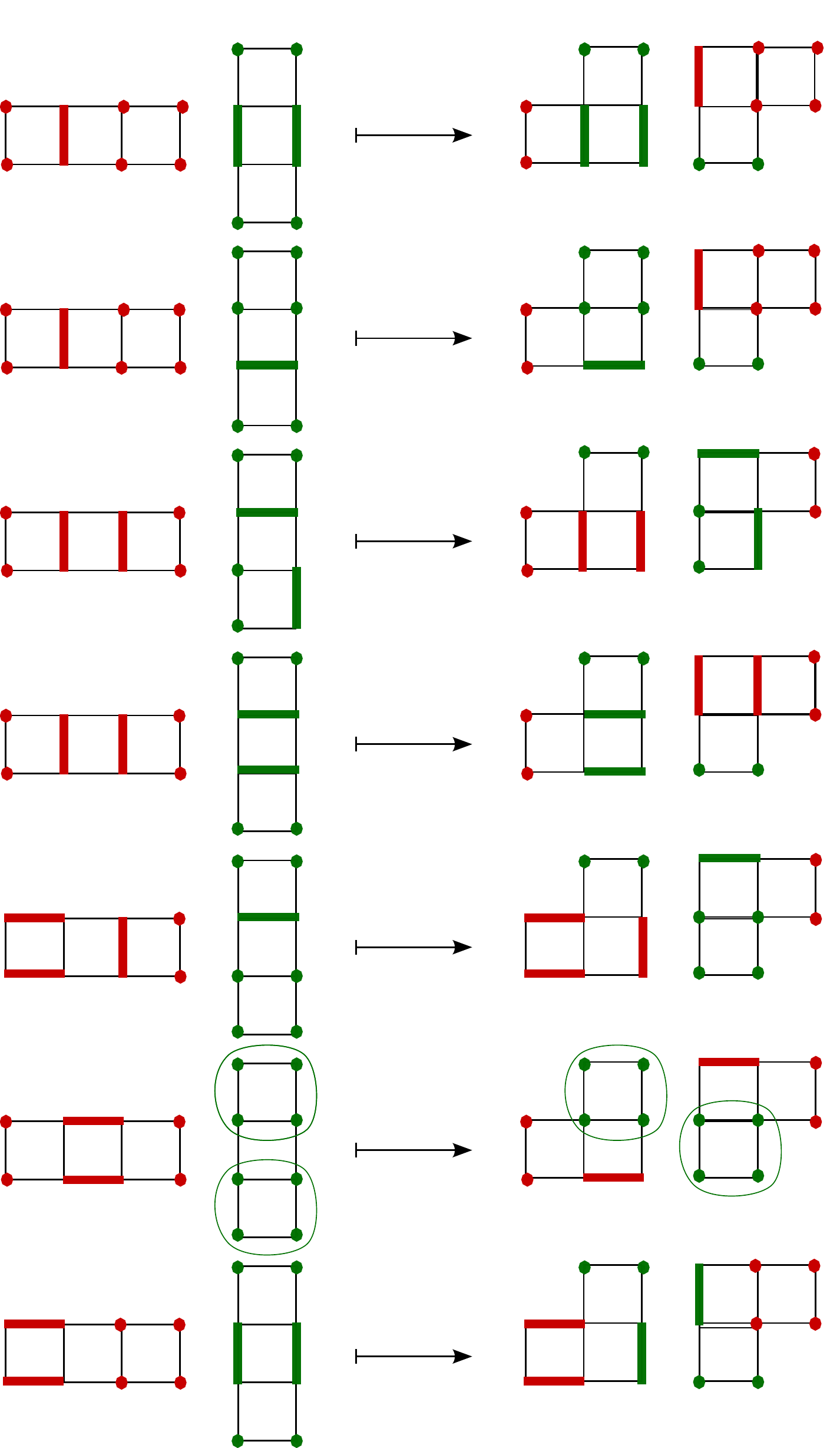}}
 \caption{The operation $\nu$. The snake graphs $\mathcal{G}_1[s-1,s+1]$ and $\mathcal{G}_2[s'-1,s'+1]$ (left) are straight snake graphs and the overlap is a single tile. The resulting pair $(\nu_{s,1},\nu_{s,2})$ is shown on the right. Two vertices in two different green circles cannot be matched to each other.}
 \label{nu}
 \end{center}
\end{figure}

Now we define $\varphi$ for the statement (2).  Let $\sigma_s=(\sigma_{s,3}, \sigma_{s,5})$ be the map described in Figure \ref{sigma}. Then define 
\begin{align*}
 \varphi : \match (\calg_1 \sqcup \calg_2) \longrightarrow \match (\graft _{s,e_3} (\calg_1,\calg_2))
\end{align*}
as follows.
\begin{figure}
\begin{center}
\scalebox{0.8}{ 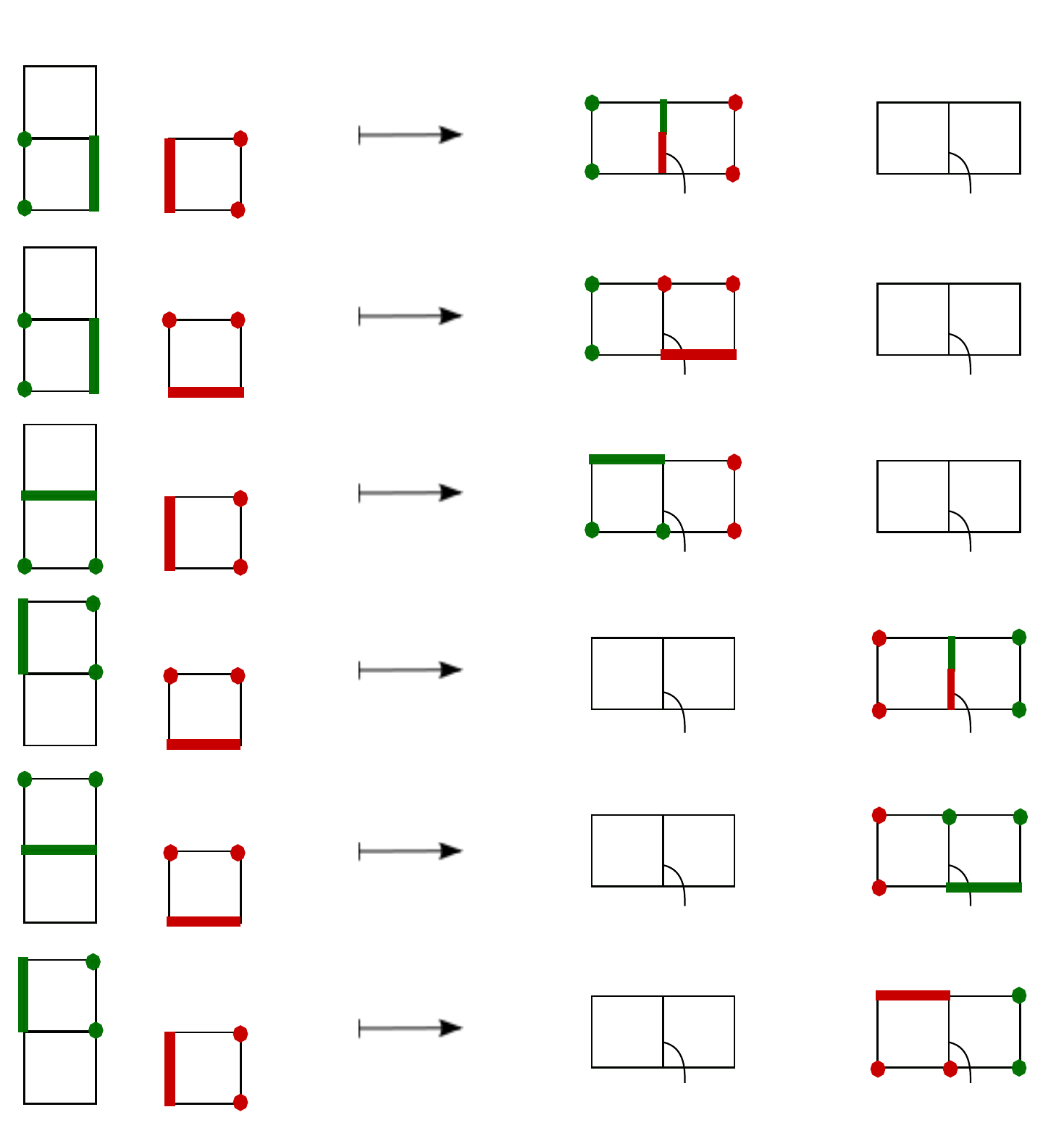}
\caption{The operation $\sigma$}
\label{sigma}
\end{center}
\end{figure}

\begin{itemize}
 \item[(i)] $\varphi (P_1,P_2)=(P[1,s-1] \cup \sigma_{s,3} \cup P_2[2,d'], P_1[k_4,d] )$ if the local configuration of $(P_1,P_2)$ on $(\calg_1[s,s+1],\calg_2[1,1])$ is one of the first three cases in Figure~\ref{sigma}, and
 \item[(ii)] $\varphi(P_1,P_2)=(P_1[s+2,d] \cup \sigma_{s,5} \cup P_2[2,d'], P_1[1,k_6])$ if the local configuration of $(P_1,P_2)$ on $(\calg_1[s,s+1],\calg_2[1,1])$ is one of the last three cases in Figure~\ref{sigma}.
\end{itemize}

Here we agree that if $s=d$ then $\varphi$ is defined by step (i) in the first three cases of Figure \ref{sigma}, where we delete the tiles $G$, and, in case five of Figure \ref{sigma}, we have $\varphi(P_1,P_2)=(P_2[k_5,d'], P_1[1,k_6]).$
In section  \ref{sect 7}, we proof the theorem by constructing the inverse map of $\varphi$.\end{proof}

\section{Snake graphs of cluster variables}\label{sect 4}

In this section we recall how snake graphs arise naturally in the theory of cluster algebras. We follow the exposition in \cite{MSW2}.

\subsection{Cluster algebras}\label{sect cluster algebras}
To define  a cluster algebra~$\Acal$ we must first fix its
ground ring.
Let $(\PP,\oplus, \cdot)$ be a \emph{semifield}, i.e.,
an abelian multiplicative group endowed with a binary operation of
\emph{(auxiliary) addition}~$\oplus$ which is commutative, associative, and
distributive with respect to the multiplication in~$\PP$.
The group ring~$\ZZ\PP$ will be
used as a \emph{ground ring} for~$\Acal$.
One important choice for $\PP$ is the tropical semifield; in this case we say that the
corresponding cluster algebra is of {\it geometric type}.
Let $\Trop (u_1, \dots, u_{m})$ be an abelian group (written
multiplicatively) freely generated by the $u_j$.
We define  $\oplus$ in $\Trop (u_1,\dots, u_{m})$ by
\begin{equation}
\label{eq:tropical-addition}
\prod_j u_j^{a_j} \oplus \prod_j u_j^{b_j} =
\prod_j u_j^{\min (a_j, b_j)} \,,
\end{equation}
and call $(\Trop (u_1,\dots,u_{m}),\oplus,\cdot)$ a \emph{tropical
 semifield}.
Note that the group ring of $\Trop (u_1,\dots,u_{m})$ is the ring of Laurent
polynomials in the variables~$u_j\,$.

As an \emph{ambient field} for
$\Acal$, we take a field $\Fcal$
isomorphic to the field of rational functions in $n$ independent
variables (here $n$ is the \emph{rank} of~$\Acal$),
with coefficients in~$\QQ \PP$.
Note that the definition of $\Fcal$ does not involve
the auxiliary addition
in~$\PP$.

\begin{defn}
\label{def:seed}
A \emph{labeled seed} in~$\Fcal$ is
a triple $(\xx, \yy, B)$, where
\begin{itemize}
\item
$\xx = (x_1, \dots, x_n)$ is an $n$-tuple 
from $\Fcal$
forming a \emph{free generating set} over $\QQ \PP$,
\item
$\yy = (y_1, \dots, y_n)$ is an $n$-tuple
from $\PP$, and
\item
$B = (b_{ij})$ is an $n\!\times\! n$ integer matrix
which is \emph{skew-symmetrizable}.
\end{itemize}
That is, $x_1, \dots, x_n$
are algebraically independent over~$\QQ \PP$, and
$\Fcal = \QQ \PP(x_1, \dots, x_n)$.
We refer to~$\xx$ as the (labeled)
\emph{cluster} of a labeled seed $(\xx, \yy, B)$,
to the tuple~$\yy$ as the \emph{coefficient tuple}, and to the
matrix~$B$ as the \emph{exchange matrix}.
\end{defn}

We obtain ({\it unlabeled}) {\it seeds} from labeled seeds
by identifying labeled seeds that differ from
each other by simultaneous permutations of
the components in $\xx$ and~$\yy$, and of the rows and columns of~$B$.

We  use the notation
$[x]_+ = \max(x,0)$,
$[1,n]=\{1, \dots, n\}$, and
\begin{align*}
\sgn(x) &=
\begin{cases}
-1 & \text{if $x<0$;}\\
0  & \text{if $x=0$;}\\
 1 & \text{if $x>0$.}
\end{cases}
\end{align*}

\begin{defn}
\label{def:seed-mutation}
Let $(\xx, \yy, B)$ be a labeled seed in $\Fcal$,
and let $k \in [1,n]$.
The \emph{seed mutation} $\mu_k$ in direction~$k$ transforms
$(\xx, \yy, B)$ into the labeled seed
$\mu_k(\xx, \yy, B)=(\xx', \yy', B')$ defined as follows:
\begin{itemize}
\item
The entries of $B'=(b'_{ij})$ are given by
\begin{equation}
\label{eq:matrix-mutation}
b'_{ij} =
\begin{cases}
-b_{ij} & \text{if $i=k$ or $j=k$;} \\[.05in]
b_{ij} + \sgn(b_{ik}) \ [b_{ik}b_{kj}]_+
 & \text{otherwise.}
\end{cases}
\end{equation}
\item
The coefficient tuple $\yy'=(y_1',\dots,y_n')$ is given by
\begin{equation}
\label{eq:y-mutation}
y'_j =
\begin{cases}
y_k^{-1} & \text{if $j = k$};\\[.05in]
y_j y_k^{[b_{kj}]_+}
(y_k \oplus 1)^{- b_{kj}} & \text{if $j \neq k$}.
\end{cases}
\end{equation}
\item
The cluster $\xx'=(x_1',\dots,x_n')$ is given by
$x_j'=x_j$ for $j\neq k$,
whereas $x'_k \in \Fcal$ is determined
by the \emph{exchange relation}
\begin{equation}
\label{exchange relation}
x'_k = \frac
{y_k \ \prod x_i^{[b_{ik}]_+}
+ \ \prod x_i^{[-b_{ik}]_+}}{(y_k \oplus 1) x_k} \, .
\end{equation}
\end{itemize}
\end{defn}

We say that two exchange matrices $B$ and $B'$ are {\it mutation-equivalent}
if one can get from $B$ to $B'$ by a sequence of mutations.
\begin{defn}
\label{def:patterns}
Consider the \emph{$n$-regular tree}~$\TT_n$
whose edges are labeled by the numbers $1, \dots, n$,
so that the $n$ edges emanating from each vertex receive
different labels.
A \emph{cluster pattern}  is an assignment
of a labeled seed $\Sigma_t=(\xx_t, \yy_t, B_t)$
to every vertex $t \in \TT_n$, such that the seeds assigned to the
endpoints of any edge $t \overunder{k}{} t'$ are obtained from each
other by the seed mutation in direction~$k$.
The components of $\Sigma_t$ are written as:
\begin{equation}
\label{eq:seed-labeling}
\xx_t = (x_{1;t}\,,\dots,x_{n;t})\,,\quad
\yy_t = (y_{1;t}\,,\dots,y_{n;t})\,,\quad
B_t = (b^t_{ij})\,.
\end{equation}
\end{defn}

Clearly, a cluster pattern  is uniquely determined
by an arbitrary  seed.

\begin{defn}
\label{def:cluster-algebra}
Given a cluster pattern, we denote
\begin{equation}
\label{eq:cluster-variables}
\Xcal
= \bigcup_{t \in \TT_n} \xx_t
= \{ x_{i,t}\,:\, t \in \TT_n\,,\ 1\leq i\leq n \} \ ,
\end{equation}
the union of clusters of all the seeds in the pattern.
The elements $x_{i,t}\in \Xcal$ are called \emph{cluster variables}.
The 
\emph{cluster algebra} $\Acal$ associated with a
given pattern is the $\ZZ \PP$-subalgebra of the ambient field $\Fcal$
generated by all cluster variables: $\Acal = \ZZ \PP[\Xcal]$.
We denote $\Acal = \Acal(\xx, \yy, B)$, where
$(\xx,\yy,B)$
is any seed in the underlying cluster pattern.
\end{defn}

\subsection{Cluster algebras arising from unpunctured
    surfaces}\label{sect surfaces} 
%

Let $S$ be a connected oriented 2-dimensional Riemann surface with
nonempty
boundary, and let $M$ be a nonempty finite subset of the boundary of $S$, such that each boundary component of $S$ contains at least one point of $M$. The elements of $M$ are called {\it marked points}. The
pair $(S,M)$ is called a \emph{bordered surface with marked points}.
 
For technical reasons, we require that $(S,M)$ is not
a disk with 1,2 or 3 marked points.

\begin{defn}
An \emph{arc} $\zg$ in $(S,M)$ is a curve in $S$, considered up
to isotopy, such that: 
\begin{itemize}
\item[(a)] 
the endpoints of $\zg$ are in $M$;
\item[(b)] 
$\zg$ does not cross itself, except that its endpoints may coincide;
\item[(c)] 
except for the endpoints, $\zg$ is disjoint from   the boundary of $S$; and
\item[(d)] 
$\zg$ does not cut out a monogon or a bigon. 
\end{itemize}   
\end{defn}     

Curves that connect two
marked points and lie entirely on the boundary of $S$ without passing
through a third marked point are \emph{boundary segments}.
Note that boundary segments are not   arcs.

\begin{defn}
For any two arcs $\zg,\zg'$ in $S$, let $e(\zg,\zg')$ be the minimal
number of crossings of 
arcs $\za$ and $\za'$, where $\za$ 
and $\za'$ range over all arcs isotopic to 
$\zg$ and $\zg'$, respectively.
We say that arcs $\zg$ and $\zg'$ are  \emph{compatible} if $e(\zg,\zg')=0$. 
\end{defn}

\begin{defn}
A \emph{triangulation} is a maximal collection of
pairwise compatible arcs (together with all boundary segments). 
\end{defn}

\begin{defn}
Triangulations are connected to each other by sequences of 
{\it flips}.  Each flip replaces a single arc $\gamma$ 
in a triangulation $T$ by a (unique) arc $\gamma' \neq \gamma$
that, together with the remaining arcs in $T$, forms a new 
triangulation.
\end{defn}

\begin{defn}
Choose any   triangulation
$T$ of $(S,M)$, and let $\tau_1,\tau_2,\ldots,\tau_n$ be the $n$ arcs of
$T$.
For any triangle $\Delta$ in $T$, we define a matrix 
$B^\Delta=(b^\Delta_{ij})_{1\le i\le n, 1\le j\le n}$  as follows.
\begin{itemize}
\item $b_{ij}^\Delta=1$ and $b_{ji}^{\Delta}=-1$ if $\tau_i$ and $\tau_j$ are sides of 
  $\Delta$ with  $\tau_j$ following $\tau_i$  in the 
  clockwise order.
\item $b_{ij}^\Delta=0$ otherwise.
\end{itemize}
 
Then define the matrix 
$ B_{T}=(b_{ij})_{1\le i\le n, 1\le j\le n}$  by
$b_{ij}=\sum_\Delta b_{ij}^\Delta$, where the sum is taken over all
triangles in $T$.
\end{defn}

Note that  $B_{T}$ is skew-symmetric and each entry  $b_{ij}$ is either
$0,\pm 1$, or $\pm 2$, since every arc $\tau$ is in at most two triangles.

\begin{thm} \cite[Theorem 7.11]{FST} and \cite[Theorem 5.1]{FT}
\label{clust-surface}
Fix a bordered surface $(S,M)$ and let $\Acal$ be the cluster algebra associated to
the signed adjacency matrix of a   triangulation. Then the (unlabeled) seeds $\Sigma_{T}$ of $\Acal$ are in bijection
with  the triangulations $T$ of $(S,M)$, and
the cluster variables are  in bijection
with the arcs of $(S,M)$ (so we can denote each by
$x_{\gamma}$, where $\gamma$ is an arc). Moreover, each seed in $\Acal$ is uniquely determined by its cluster.  Furthermore,
if a   triangulation $T'$ is obtained from another
  triangulation $T$ by flipping an arc $\gamma\in T$
and obtaining $\gamma'$,
then $\Sigma_{T'}$ is obtained from $\Sigma_{T}$ by the seed mutation
replacing $x_{\gamma}$ by $x_{\gamma'}$.
\end{thm}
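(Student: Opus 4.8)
The plan is to follow the strategy of Fomin--Shapiro--Thurston \cite{FST} and Fomin--Thurston \cite{FT}, building a dictionary between the topology of $(S,M)$ and the combinatorics of the seed pattern on $\TT_n$. The cornerstone is a purely local compatibility between \emph{flips of arcs} and \emph{matrix mutation}: if $T'$ is obtained from $T$ by flipping the arc $\tau_k$, then the signed adjacency matrices satisfy $B_{T'}=\mu_k(B_T)$, where $\mu_k$ is the matrix mutation of Definition \ref{def:seed-mutation}. Since $B_T=\sum_\Delta B^\Delta$ is assembled from the triangles of $T$, and flipping $\tau_k$ alters only the two triangles adjacent to $\tau_k$ (equivalently, the diagonal of a single quadrilateral), I would verify this by a finite case analysis over the configurations of the at most four sides of that quadrilateral, matching the change in the entries $b_{ij}$ against the rule $b'_{ij}=b_{ij}+\sgn(b_{ik})\,[b_{ik}b_{kj}]_+$. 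In the unpunctured case there are no self-folded triangles, so every arc is the diagonal of an honest quadrilateral and is flippable, and no tagged arcs are needed.

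Next I would import the topological input that the flip graph of triangulations of an unpunctured $(S,M)$ is connected: any two triangulations are joined by a sequence of flips. Fixing an initial triangulation $T_0$ at a base vertex of $\TT_n$ and sending an edge labelled $k$ to the flip of $\tau_k$, this gives a map from $\TT_n$ to triangulations, and by the previous paragraph the exchange matrices along any path in $\TT_n$ evolve exactly as the signed adjacency matrices of the corresponding triangulations. Thus the \emph{combinatorial} half of the statement --- that every triangulation is reached and that the flip of $\gamma$ is mirrored by the mutation in the direction of $\gamma$ --- is in place.

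The genuine difficulty, and the step I expect to be the main obstacle, is that the flip graph is \emph{not} a tree: flipping $\tau_k$ twice returns to $T$, and there are longer relations among flips, so a priori the cluster variable produced at a vertex of $\TT_n$ could depend on the chosen path rather than only on the underlying arc. To force well-definedness of the assignment $\gamma\mapsto x_\gamma$ I would follow Fomin--Thurston and realize cluster variables geometrically as \emph{lambda lengths} (Penner coordinates on a decorated Teichm\"uller space of $(S,M)$): to each arc $\gamma$ one assigns an honest function $\lambda(\gamma)$, and the Ptolemy relation among the lambda lengths of the sides and diagonals of a quadrilateral is \emph{exactly} the exchange relation \eqref{exchange relation}. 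Because $\lambda(\gamma)$ depends on $\gamma$ alone, this manifestly produces a path-independent assignment $\gamma\mapsto x_\gamma$, and the algebraic independence of the lambda lengths of the arcs of any single triangulation shows that each cluster $\xx_T$ is a free generating set and that distinct arcs give distinct cluster variables.

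Finally I would assemble the bijections. Surjectivity of $\gamma\mapsto x_\gamma$ onto the cluster variables holds by construction, since every cluster variable occurs in some seed reachable from $\Sigma_{T_0}$ by flips; injectivity is the lambda-length input above. A triangulation $T$ determines its seed (the matrix $B_T$ from the signed adjacency, the cluster from the arcs of $T$, with coefficients coming from the boundary segments carried along in the same lambda-length framework), and conversely the cluster $\xx_T$ recovers the set of arcs and hence $T$; this simultaneously yields the bijection between seeds and triangulations and the statement that a seed is uniquely determined by its cluster. The flip-equals-mutation clause is then precisely the local compatibility of the first paragraph transported through this correspondence.
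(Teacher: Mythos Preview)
The paper does not prove this theorem at all: it is stated as background, with the proof delegated entirely to the cited references \cite{FST} and \cite{FT}. So there is no ``paper's own proof'' to compare your proposal against.

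That said, your sketch is a faithful outline of how the result is actually established in those references: the flip--mutation compatibility for the signed adjacency matrix (local, by case analysis on the quadrilateral), connectivity of the flip graph, and the lambda-length realization from \cite{FT} to break the path-dependence ambiguity and obtain the bijection with arcs. This is the right architecture, and the identification of path-independence as the genuine obstacle is accurate. One small caveat: your remark that ``in the unpunctured case \ldots\ no tagged arcs are needed'' is correct for the setting of the present paper (which restricts to unpunctured surfaces throughout), but the theorem as stated in \cite{FST} covers punctured surfaces as well, where tagged arcs and tagged triangulations are essential and the argument is considerably more delicate.
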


\subsection{Skein relations}
In this section we review some results from \cite{MW}.
\begin{defn}   \label{gen-arc}
A \emph{generalized arc}  in $(S,M)$ is a curve $\gamma$ in $S$ such that:
\begin{itemize}
\item[(a)] the endpoints of $\gamma$ are in $M$; 
\item[(b)] except for the endpoints,
$\gamma$ is disjoint  from the boundary of $S$; and
\item[(c)]
$\gamma$ does not cut out a monogon or a bigon.
\end{itemize}
\end{defn}

Note that we allow a generalized arc to cross itself a finite 
number of times.  We consider generalized 
arcs up to isotopy (of immersed
arcs).

\begin{defn}
A \emph{closed loop} in $(S,M)$ is a closed curve
$\gamma$ in $S$ which is disjoint from the 
boundary of $S$.  We allow a closed loop to have a finite
number of self-crossings.
As in Definition \ref{gen-arc}, we consider closed
loops up to isotopy.
\end{defn}

\begin{defn}
A closed loop in $(S,M)$ is called \emph{essential} if
  it is not contractible
and it does not have self-crossings.
\end{defn}

\begin{defn} (Multicurve)
We define a \emph{multicurve} to be a finite multiset of generalized  
arcs and closed loops such that there are only a finite number of pairwise crossings among the collection.
We say that a multicurve is \emph{simple}
if there are no pairwise crossings among the collection and 
no self-crossings.
\end{defn}

If a multicurve is not simple, 
then there are two ways to \emph{resolve} a crossing to obtain a multicurve that no longer contains this crossing and has no additional crossings.  This process is known as \emph{smoothing}.

\begin{defn}\label{def:smoothing} (Smoothing) Let $\gamma, \gamma_1$ and $\gamma_2$ be generalized  
arcs or closed loops such that we have
one of the following two cases:

\begin{enumerate}
 \item $\gamma_1$ crosses $\gamma_2$ at a point $x$,
  \item $\gamma$ has a self-crossing at a point $x$.
\end{enumerate}

\noindent Then we let $C$ be the multicurve $\{\gamma_1,\gamma_2\}$ or $\{\gamma\}$ depending on which of the two cases we are in.  We define the \emph{smoothing of $C$ at the point $x$} to be the pair of multicurves $C_+ = \{\alpha_1,\alpha_2\}$ (resp. $\{\alpha\}$) and $C_- = \{\beta_1,\beta_2\}$ (resp. $\{\beta\}$).

Here, the multicurve $C_+$ (resp. $C_-$) is the same as $C$ except for the local change that replaces the crossing {\Large $\times$} with the pair of segments 
  $\genfrac{}{}{0pt}{5pt}{\displaystyle\smile}{\displaystyle\frown}$ (resp. { $\supset \subset$}).    
\end{defn}   

Since a multicurve  may contain only a finite number of crossings, by repeatedly applying smoothings, we can associate to any multicurve  a collection of simple multicurves.  
  We call this resulting multiset of multicurves the \emph{smooth resolution} of the multicurve $C$.

\begin{thm}\label{th:skein1}(Skein relations) \cite[Propositions 6.4,6.5,6.6]{MW}
Let $C,$ $ C_{+}$, and $C_{-}$ be as in Definition \ref{def:smoothing}. 
Then we have the following identity in $\Aprin(B_T)$,
\begin{equation*}
x_C = \pm Y_1 x_{C_+} \pm Y_2 x_{C_-},
\end{equation*}
where $Y_1$ and $Y_2$ are monomials in the variables $y_{\tau_i}$.    
The monomials $Y_1$ and $Y_2$ can be expressed using the intersection numbers of the elementary laminations (associated to triangulation $T$) with  
the curves in $C,C_+$ and $C_-$.
\end{thm}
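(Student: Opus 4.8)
The plan is to reprove this statement via the snake-graph calculus developed above, reducing the analytic identity to the combinatorial bijection $\varphi$ of Theorem \ref{bijections}. The starting point is the expansion formula of \cite{MSW}: for a generalized arc $\gamma$ the element $x_\gamma\in\Aprin(B_T)$ is a weighted sum over the perfect matchings of its snake graph $\calg_\gamma$, where each edge carries an initial cluster variable $x_{\tau_i}$ as weight and each matching carries a $y$-monomial recording its height relative to the minimal matching. Writing $C=\{\gamma_1,\gamma_2\}$ (case 1) or $C=\{\gamma\}$ (case 2), the product $x_C=x_{\gamma_1}x_{\gamma_2}$ is then the weighted sum over $\match(\calg_1\sqcup\calg_2)$, since a matching of a disjoint union is a pair of matchings; the self-crossing case is parallel after replacing the snake graph by the band graph of the closed loop.

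First I would install the surface-to-graph dictionary. By Theorem \ref{lem 53} the arcs $\gamma_1,\gamma_2$ cross in $(S,M)$ exactly when $\calg_1,\calg_2$ cross in some overlap $\calg$, and the crossing point $x$ pins down that overlap. Then, by Theorems \ref{smoothing1} and \ref{smoothing2}, the two pairs of graphs produced by $\res_{\calg}(\calg_1,\calg_2)=(\calg_3\sqcup\calg_4,\ \calg_5\sqcup\calg_6)$ are precisely the snake graphs of the smoothed multicurves $C_+$ and $C_-$. This promotes the purely graph-theoretic $\varphi$ of Theorem \ref{bijections} into a statement about curves: $\varphi$ partitions $\match(\calg_1\sqcup\calg_2)$ into the matchings sent to $\match(\calg_3\sqcup\calg_4)$ and those sent to $\match(\calg_5\sqcup\calg_6)$, i.e.\ into a $C_+$-branch and a $C_-$-branch.

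The identity itself then follows from weight-preservation. Using Theorems \ref{laurent} and \ref{laurent2}, $\varphi$ preserves the edge weights $x_{\tau_i}$ up to a global monomial factor and the $y$-height up to a global $y$-monomial on each branch; summing over matchings and collecting these global factors as $Y_1$ on the $C_+$-branch and $Y_2$ on the $C_-$-branch yields
\begin{equation*}
x_C = \pm\, Y_1\, x_{C_+} \pm\, Y_2\, x_{C_-}.
\end{equation*}
That $Y_1,Y_2$ are monomials in the $y_{\tau_i}$, expressible through intersection numbers of the elementary laminations with $C,C_+,C_-$, comes from reading the height discrepancy between the minimal matching of $\calg_1\sqcup\calg_2$ and those of the resolution as a difference of lamination shear coordinates; this is the content of Corollary \ref{skein}.

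The hard part, and where the real work sits, is controlling exactly these normalization factors and the signs. The weight function is calibrated against the minimal matching of each snake graph, and the resolution changes which matching is minimal; reconciling that shift is what produces $Y_1,Y_2$, and it must be carried out uniformly across all boundary cases of the construction ($s=1$, $t=d$, and their primed analogues), where $\calg_5$ or $\calg_6$ degenerates to a single edge $\{e_0\}$ or $\{e_d\}$. One must also handle the genuinely degenerate smoothings: when a smoothed curve is contractible it contributes a scalar $\pm 2$, and when it cuts out a monogon or bigon the associated snake graph is empty and the term either drops out or specializes---these are exactly the cases that fix the signs $\pm$. Finally, the self-crossing case forces the entire chain of arguments to be transported from snake graphs to band graphs, which I expect to be the technically heaviest step.
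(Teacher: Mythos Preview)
The paper does not give its own proof of this theorem: it is quoted verbatim from \cite{MW} as background in Section~\ref{sect 4}, and no argument is supplied there. What the paper does prove, by exactly the snake-graph-calculus strategy you outline, is the special case recorded as Corollary~\ref{skein}: two honest arcs $\gamma_1,\gamma_2$ (no self-crossings, no closed loops) with a single crossing, where the coefficients $Y_1,Y_2$ are made explicit as $1$ and $y(\tcalg)$ in the non-empty-overlap case and as $y_{34},y_{56}$ in the empty-overlap case. Your sketch of that part---Theorem~\ref{lem 53} for the crossing dictionary, Theorems~\ref{smoothing1}--\ref{smoothing2} for the smoothed graphs, Theorems~\ref{laurent}--\ref{laurent2} for weight-preservation of $\varphi$---matches the paper's route precisely.

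Where your proposal goes beyond the paper is in the generality of the statement. Theorem~\ref{th:skein1} as quoted covers generalized arcs with self-crossings and closed loops, and you correctly flag that this requires band graphs and an analysis of contractible components. The paper explicitly does \emph{not} carry this out: see the second Remark after Theorem~\ref{smoothing1}, which defers the self-crossing and essential-loop cases to a forthcoming paper. So your proposal is not wrong, but the portion you identify as ``the technically heaviest step'' is simply absent here; there is nothing in the present paper to compare it against. If the intent is only to reprove what the paper actually proves (Corollary~\ref{skein}), then your outline is essentially the paper's own argument; if the intent is to reprove Theorem~\ref{th:skein1} in full, the paper offers no proof to compare with, and your sketch of the loop/self-crossing case remains a plan rather than an argument.
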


\subsection{Snake graphs from surfaces}\label{sect tiles}\label{sect graph}
%
%

 Let 
$\zg$ be an   arc in $(S,M)$ which is not in $T$. 
Choose an orientation on $\zg$, let $s\in M$ be its starting point, and let $t\in M$ be its endpoint.
We denote by
$s=p_0, p_1, p_2, \ldots, p_{d+1}=t$
the points of intersection of $\zg$ and $T$ in order.  
Let $\tau_{i_j}$ be the arc of $T$ containing $p_j$, and let 
$\zD_{j-1}$ and 
$\zD_{j}$ be the two   triangles in $T$ 
on either side of 
$\tau_{i_j}$. Note that each of these triangles has three distinct sides, but not necessarily three distinct vertices, see Figure \ref{figr1}.
\begin{figure}
\includegraphics{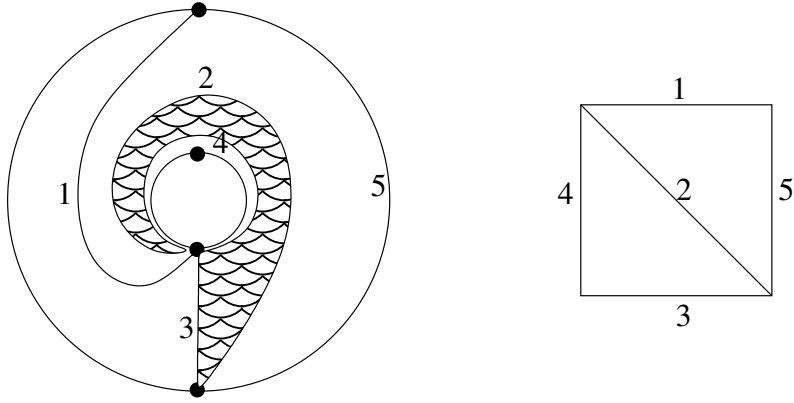}
\caption{On the left, a triangle with two vertices; on the right the tile $G_{j}$ where $i_j=2$. }\label{figr1}
\end{figure}
Let $G_j$ be the graph with 4 vertices and 5 edges, having the shape of a square with a diagonal, such that there is a bijection between the edges of $G_j$ and the 5 arcs in the two triangles $\zD_{j-1}$ and $\zD_j$, which preserves the signed adjacency of the arcs up to sign and such that the diagonal in $G_j$ corresponds to the arc $\tau_{i_j}$ containing the crossing point $p_j$. Thus $G_j$ is given by the quadrilateral in the triangulation $T$ whose diagonal is $\tau_{i_j}$. 

\begin{defn} \label{relorient}  
Given a planar embedding $\tilde G_j$ 
of $G_j$, we define the \emph{relative orientation} 
$\mathrm{rel}(\tilde G_j, T)$ 
of $\tilde G_j$ with respect to $T$ 
to be $\pm 1$, based on whether its triangles agree or disagree in orientation with those of $T$.  
\end{defn}
For example, in Figure \ref{figr1},  $\tilde G_j$ has relative orientation $+1$.

Using the notation above, 
the arcs $\tau_{i_j}$ and $\tau_{i_{j+1}}$ form two edges of a triangle $\zD_j$ in $T$.  Define $\tau_{[j]}$ to be the third arc in this triangle.

We now recursively glue together the tiles $G_1,\dots,G_d$
in order from $1$ to $d$, so that for two adjacent   tiles, 
 we glue  $G_{j+1}$ to $\tilde G_j$ along the edge 
labeled $\tau_{[j]}$, choosing a planar embedding $\tilde G_{j+1}$ for $G_{j+1}$
so that $\mathrm{rel}(\tilde G_{j+1},T) \not= \mathrm{rel}(\tilde G_j,T).$  See Figure \ref{figglue}.

\begin{figure}
\input{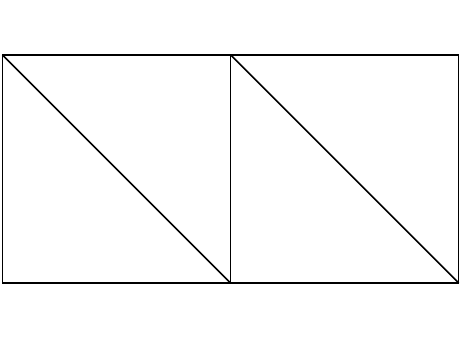_t}
\caption{Gluing tiles $\tilde G_j$ and $\tilde G_{j+1}$ along the edge labeled  $\tau_{[j]}$}
\label{figglue}
\end{figure}

After gluing together the $d$ tiles, we obtain a graph (embedded in 
the plane),
which we denote  by
${\calg}^\triangle_{\zg}$. 
\begin{defn}
The \emph{snake graph} $\calg_{\gamma}$ associated to $\gamma$ is obtained 
from ${\calg}^\triangle_{\zg}$ by removing the diagonal in each tile.
\end{defn}

In Figure \ref{figsnake}, we give an example of an 
 arc $\gamma$ and the corresponding snake graph 
${\calg}_{\zg}$. Since $\gamma$ intersects $T$
five times,  
${\calg}_{\zg}$ has five tiles. 

 \begin{defn}If $\tau \in T$ then we define its snake graph $\calg_{\tau}$ to be the graph consisting of one single edge with weight $x_{\tau}$ and two distinct endpoints (regardless whether the endpoints of $\tau$ are distinct).
 \end{defn}

\begin{figure}
\input{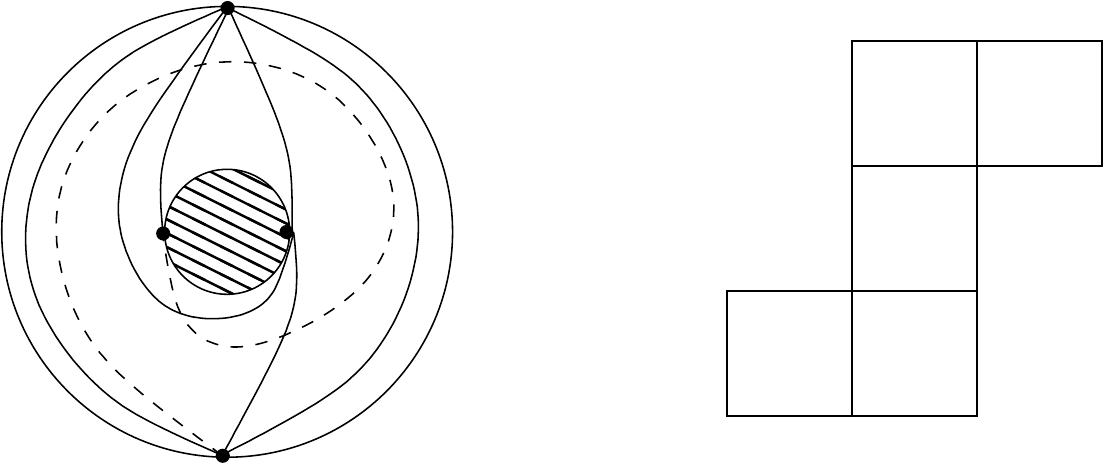_t}
\caption{An arc $\gamma$ in a triangulated annulus on the left and the corresponding snake graph $\calg_{\gamma}$ on the right. The tiles labeled 1,3,1 have positive relative orientation and the tiles 2,4 have negative relative orientation.}\label{figsnake}
\end{figure}

\subsection{Snake graph formula for cluster variables}
Recall that if $\tau$ is a boundary segment then $x_{\tau} = 1$,

\begin{defn}   
If $\zg$ is a generalized  arc and  $\tau_{i_1}, \tau_{i_2},\dots, \tau_{i_d}$
is the sequence of arcs in $T$ which $\zg$ crosses, we define the \emph{crossing monomial} 
of $\gamma$ with respect to $T$ to be
$$\mathrm{cross}(T, \gamma) = \prod_{j=1}^d x_{\tau_{i_j}}.$$
\end{defn} 

\begin{defn}  
A \emph{perfect matching} of a graph $\calg$ is a subset $P$ of the 
edges of $\calg$ such that
each vertex of $\calg$ is incident to exactly one edge of $P$. 
If $\calg$ is a snake graph and the edges of  a perfect matching $P$ of  
$\calg $ are labeled $\tau_{j_1},\dots,\tau_{j_r}$, then 
we define the {\it weight} $x(P)$ of $P$ to be 
$x_{\tau_{j_1}} \dots x_{\tau_{j_r}}$.
\end{defn}

\begin{defn}   Let $\zg$ be a generalized arc.
By induction on the number of tiles it is easy to see that the snake graph
$\calg_{\zg}$  
has  precisely two perfect matchings which we call
the {\it minimal matching} $P_-=P_-(\calg_{\zg})$ and 
the {\it maximal matching} $P_+
=P_+(\calg_{\zg})$, 
which contain only boundary edges.
To distinguish them, 
if  $\mathrm{rel}(\tilde G_1,T)=1$ (respectively, $-1$),
we define 
$e_1$ and $e_2$ to be the two edges of 
${\calg}^\triangle_{\zg}$ which lie in the counterclockwise 
(respectively, clockwise) direction from 
the diagonal of $\tilde G_1$.  Then  $P_-$ is defined as
the unique matching which contains only boundary 
edges and does not contain edges $e_1$ or $e_2$.  $P_+$
is the other matching with only boundary edges.
\end{defn} 
In the example of Figure \ref{figsnake}, the minimal matching $P_-$ contains the bottom edge of the first tile labeled 4.

\begin{lem}\cite[Theorem 5.1]{MS}
\label{thm y}
The symmetric difference $P_-\ominus P$ is the set of boundary edges of a 
(possibly disconnected) subgraph $\calg_P$ of $\calg_\zg$,
which is a union of cycles.  These cycles enclose a set of tiles 
$\cup_{j\in J} G_{j}$,  where $J$ is a finite index set.
\end{lem}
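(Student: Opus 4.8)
The plan is to exploit the planar structure of the snake graph together with elementary linear algebra in the cycle space over $\mathbb{F}_2$. Recall that $\calg_\zg$ is embedded in the plane and, by its construction as a chain of tiles glued along shared edges with the diagonal of each tile deleted, its bounded faces are exactly the $d$ square tiles $G_1,\dots,G_d$, each of which is a $4$-cycle. One checks readily that $\calg_\zg$ is $2$-connected, so these face boundaries are genuine simple cycles.

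First I would show that $P_-\ominus P$ is a disjoint union of simple cycles. Since $P_-$ and $P$ are both perfect matchings, every vertex $v$ of $\calg_\zg$ is incident to exactly one edge of $P_-$ and exactly one edge of $P$; these either coincide, so that $v$ has degree $0$ in $P_-\ominus P$, or they differ, so that $v$ has degree $2$. A subgraph all of whose vertices have degree $0$ or $2$ is a disjoint union of simple cycles, so in particular $P_-\ominus P$ lies in the cycle space $Z=Z(\calg_\zg;\mathbb{F}_2)$ of even subgraphs.

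Next I would invoke the standard fact that for a connected planar graph the boundaries of the bounded faces span $Z$; since by Euler's formula their number is $|E|-|V|+1=\dim Z$, they form a basis. Here the bounded faces are the tiles, so $\partial G_1,\dots,\partial G_d$ is a basis of $Z$, and hence there is a unique subset $J\subseteq\{1,\dots,d\}$ with $P_-\ominus P=\sum_{j\in J}\partial G_j$.

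Finally, set $\calg_P=\bigcup_{j\in J}G_j$. Working over $\mathbb{F}_2$, an interior edge shared by two tiles $G_j,G_{j+1}$ with $j,j+1\in J$ is counted twice in $\sum_{j\in J}\partial G_j$ and so cancels, whereas every other edge of a tile $G_j$ with $j\in J$ survives exactly once; thus $\sum_{j\in J}\partial G_j$ is precisely the set of boundary edges of $\calg_P$. This identifies $P_-\ominus P$ with the boundary edges of the union of tiles $\calg_P=\bigcup_{j\in J}G_j$, so the cycles enclose exactly the tiles indexed by $J$, as claimed. The point deserving the most care is the identification of the bounded faces with the tiles and the resulting basis property; for snake graphs this is transparent from the staircase structure, and if one prefers an elementary argument it can be verified by induction on $d$, peeling off the last tile $G_d$, which always contributes at least one boundary edge lying in no other tile.
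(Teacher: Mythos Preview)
The paper does not give its own proof of this lemma; it is simply quoted from \cite[Theorem~5.1]{MS}. So there is no ``paper's proof'' to compare against, and the relevant question is just whether your argument is correct.

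It is. The two ingredients are exactly the right ones: (i) the symmetric difference of two perfect matchings is $2$-regular on its support, hence an even subgraph and therefore an element of the cycle space; and (ii) for the planar snake graph the tile boundaries $\partial G_1,\dots,\partial G_d$ form an $\mathbb{F}_2$-basis of the cycle space, so the unique expansion $P_-\ominus P=\sum_{j\in J}\partial G_j$ singles out the index set $J$ and the subgraph $\calg_P=\bigcup_{j\in J}G_j$. Your final computation, that the $\mathbb{F}_2$-sum of the $\partial G_j$ over $j\in J$ is precisely the set of edges of $\calg_P$ lying in only one of its tiles (i.e.\ its boundary edges), is straightforward.

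Two small remarks. First, the appeal to $2$-connectivity is not really needed: for any connected plane graph the bounded-face boundaries form a basis of the cycle space by Euler's formula, and for snake graphs the identification of bounded faces with tiles is immediate from the embedding. Second, the peeling-off-$G_d$ induction you sketch at the end is a perfectly good self-contained substitute for invoking the general planar fact, and in the snake-graph setting it is arguably cleaner than quoting Euler; either route is fine.
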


\begin{defn} \label{height} With the notation of Lemma \ref{thm y},
we  define the \emph{height monomial} $y(P)$ of a perfect matching $P$ of a snake graph $\calg_\gamma$ by
\begin{equation*}
y(P) = \prod_{j\in J} y_{\tau_{i_j}}.
\end{equation*}\end{defn}

Following \cite{MSW2}, for each generalized arc $\gamma$, we now define a Laurent polynomial $x_\zg$, as well as a polynomial 
$F_\zg^T$ obtained from $x_{\gamma}$ by specialization.
\begin{defn} \label{def:matching}
Let $\zg$ 
be a generalized arc and let $\calg_\zg$,
be its snake graph.  
\begin{enumerate}
\item If $\zg$ 
has a contractible kink, let $\overline{\zg}$ denote the 
corresponding generalized arc with this kink removed, and define 
$x_{\zg} = (-1) x_{\overline{\zg}}$.  
\item Otherwise, define
\[ x_{\gamma}= \frac{1}{\mathrm{cross}(T,\zg)} \sum_P 
x(P) y(P),\]
 where the sum is over all perfect matchings $P$ of $G_{\zg}$.
\end{enumerate}
Define $F_{\gamma}^T$ to be the polynomial obtained from 
$x_{\gamma}$ by specializing all the $x_{\tau_i}$ to $1$.

If $\gamma$ is a curve that 
cuts out a contractible monogon, then we define $\gamma =0$.     
\end{defn}

\begin{thm}\cite[Thm 4.9]{MSW}
\label{thm MSW}
If $\gamma$ is an arc, then 
$x_{\gamma}$ 
is a the cluster variable in $ \A$,
written as a Laurent expansion with respect to the seed $\Sigma_T$,
and $F_{\gamma}^T$ is its \emph{F-polynomial}.
\end{thm}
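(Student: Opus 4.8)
The plan is to identify $x_\gamma$ with the cluster variable by pinning the cluster variables down through the only data that determines them: the initial seed $\Sigma_T$ together with the exchange relations realized by flips. By Theorem~\ref{clust-surface} every arc is reached from $T$ by a finite chain of flips, and at each flip the new cluster variable is determined recursively from the current cluster by its exchange relation. Hence any family of Laurent polynomials indexed by arcs that (a) restricts to the initial cluster on the arcs of $T$ and (b) satisfies every flip exchange relation must coincide, arc by arc, with the cluster variables. I would verify (a) and (b) for the combinatorial $x_\gamma$ of Definition~\ref{def:matching} and conclude by this uniqueness, arguing by induction on the flip-distance of $\gamma$ from $T$.

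Condition (a) is the base case and is immediate: for $\gamma=\tau_i\in T$ the snake graph $\calg_\gamma$ is a single edge of weight $x_{\tau_i}$ with a unique perfect matching, so Definition~\ref{def:matching} returns $x_\gamma=x_{\tau_i}$ and $F_\gamma^T=1$. For (b), a flip exchanges two diagonals $\gamma,\gamma'$ of a quadrilateral $Q$ with sides the arcs $a,b,c,d$, and the relation to be matched is the Ptolemy relation
\[
x_\gamma\,x_{\gamma'}=Y_1\,x_a x_c+Y_2\,x_b x_d,
\]
which is the single-crossing instance of the skein relation packaged in Theorem~\ref{th:skein1} (taking $C=\{\gamma,\gamma'\}$ crossing at one point). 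Because the inductive hypothesis already identifies $x_{\gamma'},x_a,x_b,x_c,x_d$ with genuine cluster variables, verifying this one relation for the matching-formula $x_\gamma$ forces $x_\gamma$ to be the corresponding cluster variable, the relation determining it uniquely from the other five.

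The substance of the argument, and the step I expect to be the main obstacle, is establishing the Ptolemy relation intrinsically at the level of matching formulas --- that is, exhibiting a weight- and height-preserving bijection between the perfect matchings contributing to $x_\gamma x_{\gamma'}$ and those contributing to $Y_1 x_a x_c+Y_2 x_b x_d$. Two bookkeeping ledgers must be reconciled. The crossing-monomial denominators $\mathrm{cross}(T,\cdot)$ are routine, since the multiset of $T$-arcs crossed by $\gamma$ and $\gamma'$ together differs from the multiset crossed by $a,b,c,d$ only by the edges of $Q$. The height monomials $y(P)$ of Definition~\ref{height} are delicate: because $y(P)$ records the tiles enclosed by the symmetric difference $P_-\ominus P$ (Lemma~\ref{thm y}), one must show that the local ``switch'' of a matching across the crossing in $Q$ --- of exactly the elementary type tabulated in Figures~\ref{mu} and \ref{figrho} --- changes the enclosed tile set by precisely the amount encoded in the coefficients $Y_1,Y_2$. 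Tracking these $y$-variables through the switch is the technical core.

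To keep the coefficients transparent I would perform the verification in the cluster algebra with principal coefficients $\Aprin(B_T)$, where the $y_{\tau_i}$ are formal and fully traceable, and then pass to arbitrary coefficients by the separation-of-additions formula of Fomin--Zelevinsky \cite{FZ4}, which recovers any cluster variable from its principal-coefficient counterpart and its F-polynomial. In this setting the statement factors into an $x$-part and an F-polynomial part: the $x$-part is certified by the matching bijection above (or, via a geometric route, by lambda lengths in decorated Teichm\"uller space, where Ptolemy relations hold identically), while the F-polynomial assertion follows by specializing every $x_{\tau_i}$ to $1$ in the matching sum and recognizing the result as $F_\gamma^T$. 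Combining the base case, the inductive exchange relation, and this coefficient reduction completes the identification of $x_\gamma$ with the cluster variable and of $F_\gamma^T$ with its F-polynomial.
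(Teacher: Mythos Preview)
The paper does not prove this theorem; it is quoted from \cite[Thm~4.9]{MSW} and used as input for the rest of the paper. There is therefore no proof here to compare your proposal against.

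That said, your outline is a coherent alternative route, and it is worth noting how it relates to the present paper. The inductive step you need is precisely the single-crossing skein identity for the \emph{matching formulas} $x_\gamma$, and that is exactly what this paper establishes in Corollary~\ref{skein} via the bijection $\varphi$ of Theorem~\ref{bijections} and Theorems~\ref{laurent}, \ref{laurent2}. So your plan would, in effect, feed the main results of this paper back into an inductive proof of Theorem~\ref{thm MSW}. This is \emph{not} the argument in \cite{MSW}, which proceeds instead through an unfolding to a simply connected cover and a reduction to earlier polygon results; your route is more self-contained relative to the snake-graph machinery but depends on the skein bijections, while the \cite{MSW} proof is independent of them.

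One place your sketch is genuinely thin: the exchange relation~\eqref{exchange relation} involves the coefficient $y_k$ \emph{at the seed $\Sigma_{T'}$ where the flip is performed}, not at $\Sigma_T$. To match the $Y_1,Y_2$ coming out of the combinatorial skein relation with the exchange coefficients, you must either track $y$-mutation along the entire flip sequence from $T$ to $T'$, or invoke the lamination/shear-coordinate description of principal coefficients from \cite{FT}. You flag this as ``the technical core'' but give no mechanism; without it the induction does not close, since knowing the five $x$-values on the right of the Ptolemy relation determines $x_\gamma$ only once the two coefficient monomials are pinned down.
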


\subsection{Fans}
Let $T$ be a triangulation and $\zg$ be an arc.
Let $\zD$ be a triangle in $T$ with sides $\zb_1,\zb_2$, and  $\tau$, that is crossed by $\zg$ in the following way: $\zg$ crosses  $\zb_1 $ at the point $p_1$ and crosses $\zb_2$ at the point $p_2$, and the segment of  $\zg$ from $p_1$ to $p_2$ lies entirely in $\zD$, see the left of Figure \ref{figv}. Then there exists a unique vertex $v$ of the triangle $\zD$ and a unique contractible closed curve $\ze$ given as the homotopy class of a curve starting at the point $v$, then following $\zb_1$
until the point $p_1$, then following  $\gamma$ until the point $p_2$ and then following $\zb_2$ until $v$. We will use the following notation to describe this definition: 

\[\epsilon =\xymatrix{ v \ar@{-}[r]^{\zb_1} &p_1\ar@{-}[r]^\zg & p_2 \ar@{-}[r]^{\zb_2} &v}.\]

\begin{defn} \label{def fan}
A \emph{$(T,\gamma)$-fan with vertex $v$} is a collection of arcs $\beta_0,\beta_1,\ldots,\beta_k$, with $\beta_i\in T$ and $k\ge 0$ with the following properties (see the right of Figure \ref{figv}):
\begin{enumerate}
\item $\gamma$ crosses $\beta_0,\beta_1,\ldots,\beta_k$ in order at the points $p_0,p_1,\ldots, p_k$, such that $p_i$ is a crossing point of $\gamma$ and $\beta_i$, and the segment of $\zg$ from $p_0$ to $p_k$ does not have any other crossing points with $T$;
\item each $\beta_i$ is incident to $v$;
\item for each $i<k$, let $\ze_i$  be the unique contractible closed curve given by 
\[\xymatrix{ v \ar@{-}[r]^{\zb_i} &p_i\ar@{-}[r]^-\zg & p_{i+1} \ar@{-}[r]^-{\zb_{i+1}} &v};\]
then 
 for each $i<k-1$, the concatenation of the curves $\ze_i\ze_{i+1}$ is homotopic to 
\[\xymatrix{ v \ar@{-}[r]^{\zb_i} &p_i\ar@{-}[r]^-\zg&p_{i+1}\ar@{-}[r]^-\zg & p_{i+2} \ar@{-}[r]^-{\zb_{i+2}} &v}.\]
\end{enumerate}
\end{defn}
Condition (3) in the above definition is equivalent
to the condition that \[\xymatrix{ v \ar@{-}[r]^{\zb_i} &p_i\ar@{-}[r]^-\zg & p_{i+2} \ar@{-}[r]^-{\zb_{i+2}} &v}\]
is contractible.
\begin{defn} A $(T,\gamma)$-fan
 $\beta_0,\beta_1,\ldots,\beta_k$ 
 is called \emph{maximal} if there is no arc $\za\in T$ such that 
 $\beta_0,\beta_1,\ldots,\beta_{k},\za$ or  $\za,\beta_0,\beta_1,\ldots,\beta_{k}$ is a  $(T,\gamma)$-fan.
 \end{defn}
 

\begin{figure}\begin{center}
\scalebox{0.8}{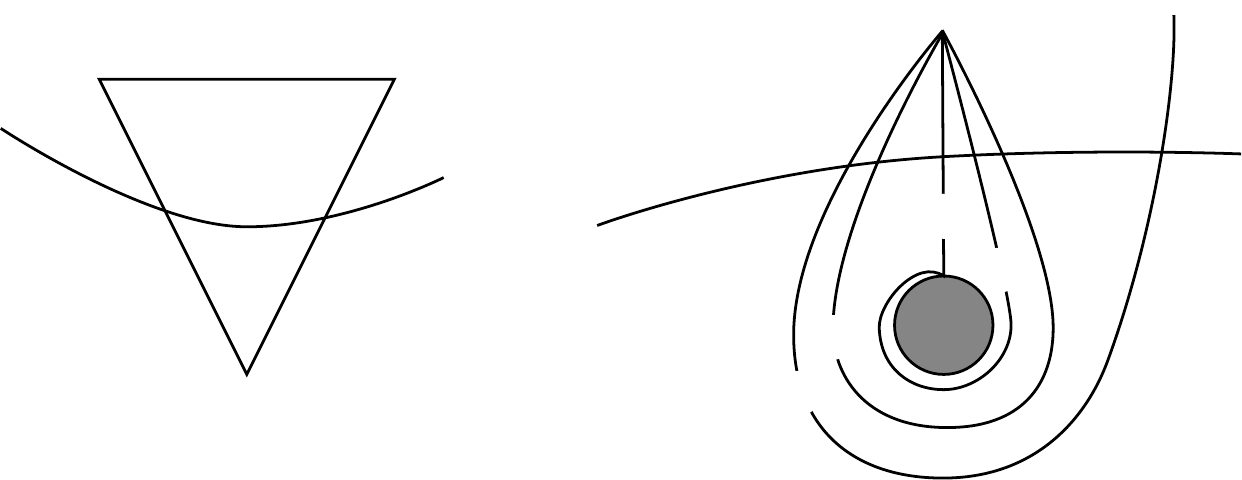}
\caption{Construction of $(T,\zg)$-fans (left). The fan $\tau_1,\tau_2$, $\tau_3$, $ \tau_4$, $\tau_2$ (right) can not be extended to the right, because the configuration $\tau_1,\tau_2,\tau_3,\tau_4,\tau_2,\tau_1$ does not satisfy condition (3) of Definition \ref{def fan}}.\label{figfan}\label{figv}\end{center}
\end{figure}

 Note that if $\tau\in T $ then $\calg_{\tau}$ has exactly one perfect matching $P$ and $x(P)=x_{\tau}$ and $y(P)=1.$ 

\section{Smoothing and snake graphs}\label{sect smoothing}\label{sect 5}

Let $(S,M,T)$ be a triangulated surface. Let $\gamma_1, \gamma_2$ be two arcs in general position in $(S,M)$ which are not in the triangulation $T$ and such that $\gamma_1$ crosses $\gamma_2$ at a point $p \in S \backslash \partial S.$

Fix an orientation on $\gamma_1$ and $\gamma_2$ and let $p_0,p_1,\ldots,p_{d+1}$ (respectively $p'_0$, $ p'_1$, $\ldots,$ $p'_{d'+1}$) be the crossing points of $\gamma_1$ (respectively $  \gamma_2$) with $T$ in order and including the starting point and the endpoint of $\gamma_1$ (respectively $\gamma_2$). Thus $\gamma_1$ runs from $p_0$ to $p_{d+1}$ and $\gamma_2$ from $p'_0$ to $p'_{d'+1}.$ Let $\Delta_j$ (respectively $\Delta '_{j'}$) be the triangle in $T$ that contains the segment of $\gamma_1$ (respectively $\gamma_2$) between the points $p_j$ and $p_{j+1},$ see Figure \ref{delta}. Finally, let $\tau_{i_j} \in T$ (respectively $\tau'_{i'_{j'}}$) denote the arc of the triangulation passing through $p_j$ (respectively $p'_{j'}$). Thus $\tau_{i_j} $ and $\tau_{i_{j+1}}$ are two sides of the triangle $\Delta_j.$ Denote the third side of $\Delta_j$ by $\tau_{[j]}.$ Similarly the third side of $\Delta'_{j'}$ is denoted by $\tau'_{[j']}.$
\begin{figure}
\begin{center}
 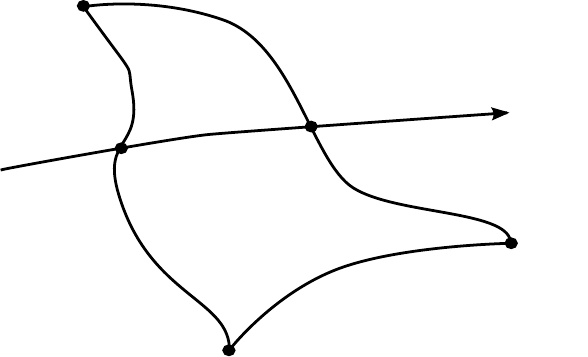
 \caption{The triangle $\Delta_j$}
 \label{delta}
 \end{center}
\end{figure}
Without loss of generality, we may assume that the crossing point of $\gamma_1$ and $\gamma_2$ lies in the interior of some triangle in $T$ and, since $\gamma_1$ and $\gamma_2$ both run through $p,$ this triangle is $\Delta_j$ for some $j,$ and $\Delta'_{j'}$ for some $j'.$

If $\Delta_j$ is the first or the last triangle that $\gamma_1$ meets then we assume without loss of generality that $\Delta_j$ is the first triangle, that is, $\Delta_j=\Delta_1.$ Similarly, if $\Delta'_{j'}$ is the first or the last triangle that $\gamma_2$ meets, then we assume without loss of generality that $\Delta'_{j'}=\Delta'_1.$ In all other cases, there is at least one side of $\Delta_j$ which is crossed both by $\gamma_1$ and $\gamma_2,$ more precisely, we have $\tau_{i_j}=\tau'_{i'_{j'}}$ or $\tau_{i_j}=\tau'_{i'_{j'+1}}$ or $\tau_{i_{j+1}}=\tau'_{i'_{j'}}$ or $\tau_{i_{j+1}}=\tau'_{i'_{j'+1}}.$ We can assume, by changing orientation if necessary, that $\tau_{i_j}=\tau'_{i'_{j'}}$ or $\tau_{i_{j+1}}=\tau'_{i'_{j'+1}}.$ Altogether, there are  five cases which are illustrated in Figure \ref{fivecrossings}.

\begin{figure}
\begin{center}
\scalebox{0.8}{ 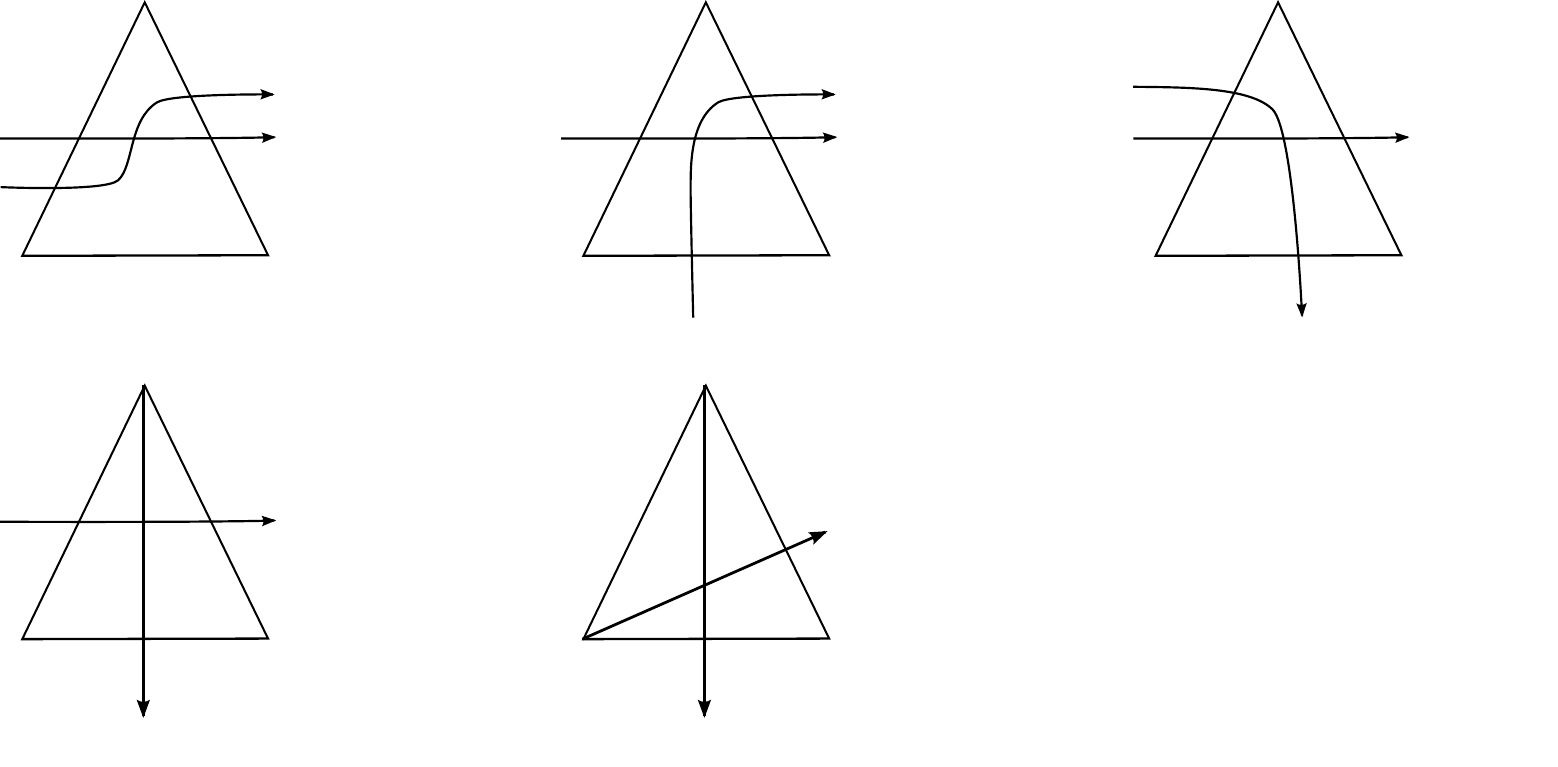}
 \caption{Five possible crossings in the triangle $\Delta_j$. In the top  left picture, $\tau_{i_j}=\tau'_{i'_{j'}}$ and $\tau_{i_{j+1}}=\tau'_{i'_{j'+1}}$,  in the top middle picture,  $\tau_{i_{j+1}}=\tau'_{i'_{j'+1}}$ and,  in the top right picture,
 $\tau_{i_j}=\tau'_{i'_{j'}}$.
}
 \label{fivecrossings}
 \end{center}
\end{figure}

\begin{defn}
 In the last two cases  in Figure \ref{fivecrossings}, we say that $\gamma_1$ and $\gamma_2$ {\it cross at $p$ with an empty overlap}  and in the first three cases $\gamma_1$ and $\gamma_2$ cross at $p$ with a {\it non-empty overlap.}
\end{defn}

Suppose now that $\gamma_1$ and $\gamma_2$ cross at $p$ with a non-empty local overlap. 
We want to define the local overlap of $\gamma_1 $ and $\gamma_2$ at $p$ to be the maximal sequence of arcs in the triangluation which are crossed by both $\gamma_1$ and $\gamma_2$ directly before and after passing through $p$.
Let

\[ s=
\left \{
\begin{array}{ccc}
  j+1 &   & \mbox{ if } \tau_{i_j} \neq \tau'_{i'_{j'}},  \\ \\
  j-k &   &  \mbox{ if } \tau_{i_j} = \tau'_{i'_{j'}},
\end{array}
\right.
\]
where $k \geq 1$ is the largest integer such that $\tau_{i_{j-1}}=\tau'_{i'_{j'-1}}, \dots, \tau_{i_{j-k}}=\tau'_{i'_{j'-k}}$
and let 

\[ t=
\left \{
\begin{array}{ccc}
  j &   & \mbox{ if } \tau_{i_{j+1}} \neq \tau'_{i'_{j'+1}},  \\ \\
  j+\ell &   &  \mbox{ if } \tau_{i_{j+1}} = \tau'_{i'_{j'+1}},
\end{array}
\right.
\]
where $\ell \geq 1$ is the largest integer such that $\tau_{i_{j+1}}=\tau'_{i'_{j'+1}}, \ldots, \tau_{i_{j+\ell}}=\tau'_{i'_{j'+\ell}}.$

\begin{defn}
 We call the sequence $(\tau_{i_s}, \tau_{i_{s+1}}, \ldots, \tau_{i_t})=(\tau'_{i'_{s'}}, \tau'_{i'_{s'+1}}, \ldots, \tau'_{i'_{t'}})$ the {\it local overlap} of $\gamma_1$ and $\gamma_2$ at $p.$
\end{defn}

\subsection{Crossing arcs and crossing snake graphs }

\begin{thm}\label{lem 53}
 Let $\gamma_1$ and $\gamma_2$ be two arcs and $\calg_1$ and $\calg_2$ their corresponding snake graphs. Then $\gamma_1$ and $\gamma_2$ cross with a non-empty local overlap $(\tau_{i_s}, \ldots, \tau_{i_t})=(\tau'_{i'_{s'}}, \ldots, \tau'_{i'_{t'}})$ if and only if $\calg_1$ and $\calg_2$ cross in $\calg_1[s,t] \cong \calg_2[s',t'].$
\end{thm}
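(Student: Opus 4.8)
The plan is to prove both directions at once by setting up a dictionary between the geometric data (how $\gamma_1$ and $\gamma_2$ sit inside the triangles of $T$) and the sign function on the snake graphs, and then checking that the geometric crossing at $p$ is equivalent to the sign conditions of Definition \ref{crossing}. First I would establish the \emph{overlap correspondence}. The tiles of $\calg_k$ are indexed by the arcs of $T$ crossed by $\gamma_k$, the tile $G_j$ is the quadrilateral with diagonal $\tau_{i_j}$, and the interior edge $e_j$ between $G_j$ and $G_{j+1}$ is the third side $\tau_{[j]}$ of $\Delta_j$. Hence a common consecutive subsequence of crossed arcs produces an isomorphism of the corresponding subgraphs, since two consecutive crossed arcs determine the intermediate triangle and therefore the gluing of the two tiles; this yields $\calg_1[s,t]\cong\calg_2[s',t']$, and conversely an overlap of the snake graphs comes from a common crossed-arc subsequence. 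I would then check that the maximality built into the local overlap (the integers $k,\ell$ are largest possible, so $\tau_{i_{s-1}}\neq\tau'_{i'_{s'-1}}$ and $\tau_{i_{t+1}}\neq\tau'_{i'_{t'+1}}$) matches the maximality conditions (i) and (ii) of an abstract overlap; for a single-tile overlap condition (ii)(b) holds because both arcs run through the same two triangles adjacent to $\tau_{i_s}$, forcing the same local straight/zigzag shape.

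Next I would record the \emph{sign dictionary}. A direct check from the definition of a sign function shows $f(e_j)=f(e_{j+1})$ if and only if $(G_j,G_{j+1},G_{j+2})$ is a zigzag, i.e.\ $\gamma_k$ turns at $\tau_{i_{j+1}}$, whereas $f(e_j)=-f(e_{j+1})$ corresponds to a straight step, i.e.\ $\gamma_k$ stays in the same $(T,\gamma_k)$-fan. Reading $e_j=\tau_{[j]}$ geometrically, the sign $f(e_j)$ encodes the turning direction of $\gamma_k$ as it crosses $\tau_{[j]}$ in $\Delta_j$, that is, on which side of $\tau_{[j]}$ the fan vertex lies. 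With this reading, the relation $f_1(e_{s-1})=-f_2(e'_{s'-1})$ (valid when $s,s'>1$, already noted in the text as a consequence of maximality) becomes transparent: since $\tau_{i_{s-1}}\neq\tau'_{i'_{s'-1}}$, the two arcs enter the common arc $\tau_{i_s}$ from opposite wings, so their signs on the entering edge are opposite; symmetrically $f_1(e_t)=-f_2(e'_{t'})$ when $t<d$ and $t'<d'$.

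The heart of the argument is the equivalence of geometric crossing with the sign conditions. Combining the two boundary relations with condition (i), $f_1(e_{s-1})=-f_1(e_t)$, gives $f_1(e_{s-1})=f_2(e'_{t'})$ and $f_1(e_t)=f_2(e'_{s'-1})$: the entering edge of one arc matches the leaving edge of the other, which is exactly the sign signature of two strands that swap sides across the overlap, while the negation of (i) is the parallel, non-crossing signature. I would make this precise through the case analysis of Figure \ref{fivecrossings}: when $p$ lies strictly inside the overlap (both $\tau_{i_j}$ and $\tau_{i_{j+1}}$ shared) the swap is detected at an interior tile and feeds into condition (i); when $p$ lies at the start or at the end (only one side shared) it is detected at $e_{s-1}$ or $e_t$, which is precisely condition (ii) together with the boundary clauses $s=1$ or $t=d$ (and the single-tile degeneration handled by the operation $\nu$). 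Since the arcs are parallel throughout $\calg_1[s,t]\cong\calg_2[s',t']$, the unique geometric crossing at $p$ forces, and is forced by, this twisted sign configuration.

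The step I expect to be the main obstacle is this last one: converting ``$\gamma_1$ and $\gamma_2$ cross exactly once at $p$'' into the combinatorial side-switching statement, because the overlap band can be embedded in $(S,M)$ in a topologically complicated way, so ``same side / opposite side'' is only meaningful locally. My remedy is to avoid any global side argument and instead track the relative position of the two arcs tile by tile through $\Delta_{s-1},\ldots,\Delta_t$: within the overlap the arcs never meet, so their relative position can change only at $p$, and propagating it through each shared triangle with the sign dictionary reduces the claim to the finitely many elementary local verifications in the configurations of Figure \ref{fivecrossings} and their boundary degenerations.
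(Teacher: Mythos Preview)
Your plan is essentially the paper's: both set up the dictionary that the sign of an interior edge $e_j$ records on which side of $\gamma$ the arc $\tau_{[j]}$ lies (zigzag in the snake graph $\leftrightarrow$ fan in $T$), and then run a short case analysis on whether the overlap reaches the ends of the arcs to see that the sign conditions of Definition~\ref{crossing} hold exactly when $\tau_{[s-1]}$ and $\tau_{[t]}$ lie on opposite sides of $\gamma_1$ along the overlap, forcing the crossing. The paper is more concise, treating only the two representative cases $s>1,\,t<d$ and $s=1,\,t<d,\,s'>1,\,t'=d'$ (the rest by symmetry) and taking the overlap correspondence for granted; your parenthetical about $\nu$ is misplaced, since $\nu$ belongs to the matching bijection of Theorem~\ref{bijections} and plays no role in this proof.
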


\begin{proof}
 Choose a sign function $f$ on the overlap of the snake graphs $\calg_1$ and $\calg_2$ and let $f_1$ and $f_2$ be the induced  sign functions on $\calg_1$ and $\calg_2,$ respectively. As usual, let $e_1, \ldots, e_{d-1}$ (respectively $e'_1, \ldots, e'_{d'-1}$) be the interior edges of $\calg_1$ (respectively $\calg_2$). Recall that the edge $e_j$ (respectively $e'_{j'}$) corresponds to the arc $\tau_{[j]}$ (respectively $\tau'_{[j']}$) of the triangle $\Delta_j$  (respectively $\Delta'_{j'}$) in the triangulation $T.$
 
 A zigzag in the snake graph corresponds to a fan in the triangulation, and a straight subgraph of the snake graph corresponds to a zigzag on the triangulation. Therefore two edges $e_j, e_k$ (respectively $e'_{j'}, e'_{k'}$) of the snake graph have the same sign with respect to $f_1$ (respectively $f_2$) if and only if the corresponding arcs $\tau_{[j]}, \tau_{[k]}$ (respectively $\tau'_{[j']}, \tau'_{[k']}$) lie on the same side of $\gamma_1$ (respectively $\gamma_2$), see Figure \ref{figfansnake}.

 \begin{figure}
\begin{center}
\scalebox{1}{ 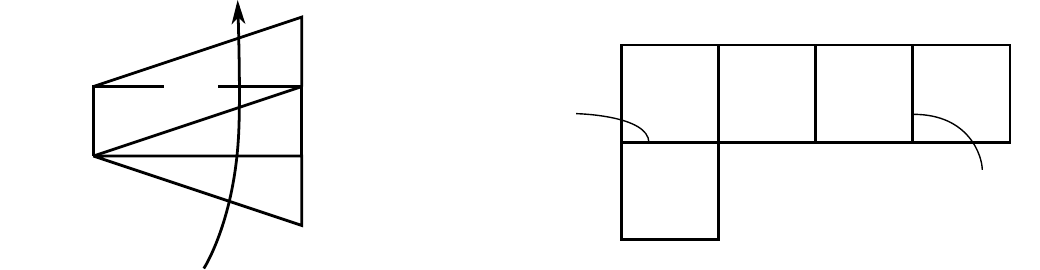}
 \caption{A part of a triangulation on the left and the corresponding snake graph on the right. The edges $e_j$ and $e_k$ have the same sign because the arcs $\tau_{[j]}$ and $\tau_{[k]}$ lie on the same side of $\gamma$.}
 \label{figfansnake}
 \end{center}
\end{figure}

Suppose first that $s>1$ and $t<d.$ Then we have one of the two situations shown in Figure \ref{figlem53}.

 \begin{figure}
\begin{center}
\scalebox{0.8}{ 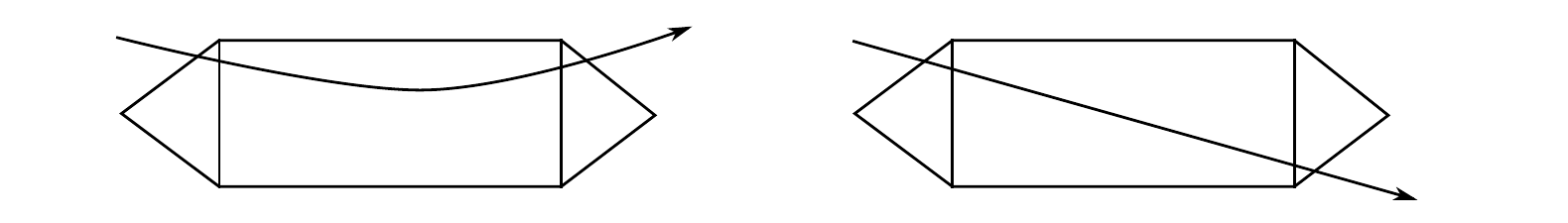}
 \caption{Proof of Theorem \ref{lem 53}; $f_1(e_{s-1})=f_1(e_t)$ on the left, and $f_1(e_{s-1})=-f_1(e_t)$ on the right.}
 \label{figlem53}
 \end{center}
\end{figure}
 
 In both cases $\gamma_2$ crosses $\tau_{i_s}, \ldots, \tau_{i_t}$ but does not cross $\tau_{i_{s-1}}$ nor $\tau_{i_{t+1}}.$ Thus in the first case $\gamma_2$ cannot cross $\gamma_1$ in the overlap. In the second case, $\gamma_2$ must cross $\gamma_1$ in the overlap, since the arcs $\tau_{[s-1]}$ and $\tau_{[t]}$ as well as their endpoints lie on opposite sites of $\gamma_1$ relative to the overlap. Therefore $\gamma_1$ and $\gamma_2$ cross if and only if $f_1(e_{s-1})=-f_1(e_t)$. This implies that the snake graphs $\calg_1$ and $\calg_2$ cross, by Definition  \ref{crossing}.
 
Suppose now that $s=1, \  t <d, \ s' >1$ and $t'=d'.$ Then we have one of the two situations shown in Figure \ref{figlem53b}.

 \begin{figure}
\begin{center}
\scalebox{0.8}{ 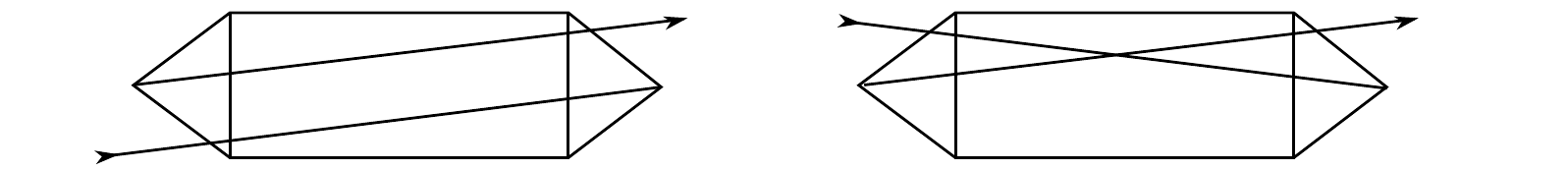}
 \caption{Proof of Theorem \ref{lem 53}}
 \label{figlem53b}
 \end{center}
\end{figure}

In the first case, $\gamma_1$ and $\gamma_2$ do not cross in the overlap and $f_1(e_t)=-f_2(e'_{s'-1})$ because the arcs $\tau_{[t]}$ and $\tau'_{[s'-1]}$ lie on opposite sides of the arcs $\gamma_1$ and $\gamma_2.$ In the second case $\gamma_1$ and $\gamma_2$  cross in the overlap and $f_1(e_t)=f_2(e'_{s'-1}).$ Thus by Definition  \ref{crossing}, $\gamma_1$ and $\gamma_2$ cross if and only if $\calg_1$ and $\calg_2$ cross.

By symmetry, this proves the statement.
\end{proof}

If $\gamma$ is an arc which is not in the triangulation, then the segment of $\gamma$ from its starting point to its first crossing with the triangulation is called the \emph{initial segment} of $\gamma.$

\begin{thm}\label{lem 55}
 Let $\gamma_1$ and $\gamma_2$ be two arcs and $\calg_1$ and $\calg_2$ their associated snake graphs. Suppose that the first tile of $\calg_2$ is given by 
$$\begin{tikzpicture}[scale=.8]
\draw (0,0)--(1,0)--(1,1)--(0,1)--(0,0); 
\node at (.5,.5) {$k$};
\node[scale=.8] at (-.3,.5) {$\delta$};
\node[scale=.8] at (.5,-.3) {$\delta'$};
\end{tikzpicture}
 $$
 Then $\gamma_1$ crosses the initial segment of $\gamma_2$ if and only if one of the following hold.
 
\begin{itemize}
 \item The first or last tile of $\calg_1$ is $G_{\delta},$ or
 \item the first or last tile of $\calg_1$ is $G_{\delta'},$ or
 \item $\calg_1$ contains one of the following  snake graphs $$
\begin{tikzpicture}[scale=.8]
\draw (0,0)--(2,0)--(2,1)--(0,1)--(0,0);
\draw (1,0)--(1,1); 
\node[scale=.9] at (.5,.5) {$\delta$};
\node[scale=.9] at (1.6,.5) {$\delta'$};
\node[scale=.7] at (1.17,.5) {$k$} ;
\end{tikzpicture}
\quad \quad
\begin{tikzpicture}[scale=.8]
\draw (0,0)--(2,0)--(2,1)--(0,1)--(0,0);
\draw (1,0)--(1,1); 
\node[scale=.9] at (.5,.5) {$\delta'$};
\node[scale=.9] at (1.6,.5) {$\delta$};
\node[scale=.7] at (1.17,.5) {$k$} ;
\end{tikzpicture}
$$
\end{itemize}
 
\end{thm}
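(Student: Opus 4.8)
The plan is to reduce the whole statement to a local picture inside the first triangle of $\gamma_2$. Write $\Delta'_0$ for the triangle of $T$ containing the initial segment of $\gamma_2$; by the snake graph construction the first tile $G'_1$ of $\calg_2$ is the quadrilateral whose diagonal is $k=\tau'_{i'_1}$, and its west and south edges $\delta,\delta'$ are precisely the two sides of $\Delta'_0$ meeting at the starting point $p'_0$ of $\gamma_2$, while $k$ is the opposite side. Let $a$ be the endpoint of $\delta$ other than $p'_0$ and $b$ that of $\delta'$; then $k$ joins $a$ and $b$, and the initial segment $\varepsilon$ of $\gamma_2$ is the chord of $\Delta'_0$ from the vertex $p'_0$ to the first crossing point $p'_1\in k$. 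Thus the statement is purely local: since $\varepsilon\subset\Delta'_0$ meets $\partial\Delta'_0$ only at $p'_0$ and at $p'_1\in k$, I would first show that $\gamma_1$ crosses $\varepsilon$ if and only if some component of $\gamma_1\cap\Delta'_0$ separates $p'_0$ from $p'_1$, i.e.\ joins the ``$\delta$--side'' of $\varepsilon$ (the subtriangle cut off on the $\delta$ end of $k$) to the ``$\delta'$--side''; no crossing of $\gamma_1$ with $\gamma_2$ outside $\Delta'_0$ can meet $\varepsilon$.

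Next I would invoke the dictionary between the triangulation and the snake graph already used in the proof of Theorem~\ref{lem 53}: a transversal crossing of $\gamma_1$ with an arc $\tau$ produces a tile of $\calg_1$ with diagonal $\tau$; two \emph{consecutive} crossings of $\gamma_1$ with two sides of one triangle $\Delta$ produce two consecutive tiles glued along the edge labelled by the third side of $\Delta$; and a first (resp.\ last) crossing of $\gamma_1$ is recorded by the first (resp.\ last) tile of $\calg_1$, the adjacent triangle in which $\gamma_1$ terminates being encoded by the boundary edges of that extreme tile. Applying this with $\Delta=\Delta'_0$, whose sides are $\delta,\delta',k$, the only tiles that $\gamma_1\cap\Delta'_0$ can contribute are $G_\delta$, $G_{\delta'}$ and $G_k$, glued in consecutive pairs along the remaining side.

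With the reduction and the dictionary in place the core is a finite classification of the separating component of $\gamma_1\cap\Delta'_0$ by the two sides of $\partial\Delta'_0\setminus\{p'_0,p'_1\}$ carrying its endpoints. A component from the interior of $\delta$ to the interior of $\delta'$ is a pair of consecutive crossings, hence a subgraph $G_\delta$--$G_{\delta'}$ of $\calg_1$ glued along $k$, in one of the two orders: this is the third bullet. A component that is a terminal segment of $\gamma_1$ ending at the vertex $b$ while crossing $\delta$ (resp.\ ending at $a$ while crossing $\delta'$) makes $G_\delta$ (resp.\ $G_{\delta'}$) the first or last tile of $\calg_1$, with $\gamma_1$ terminating \emph{inside} $\Delta'_0$: these are the first two bullets. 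Conversely, each listed configuration exhibits an explicit chord of $\Delta'_0$ separating $p'_0$ from $p'_1$; for the third bullet this is the elementary cevian observation that a chord from $\delta$ to $\delta'$ must meet the cevian $p'_0p'_1$. Both implications then follow once the enumeration is proved complete.

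I expect the main obstacle to be completeness of this enumeration, and it is concentrated in the side $k$. Degenerate endpoint pairs along $k$ are ruled out automatically (a transversal chord cannot have both ends on the line $k$), but two non-degenerate configurations remain --- $\gamma_1$ crossing $\delta$ and then $k$, or $\delta'$ and then $k$ --- which would produce a tile $G_k=G'_1$ in $\calg_1$ and are not among the three bullets. The way to close this is the dichotomy that $\gamma_1$ meeting $k=\tau'_{i'_1}$ forces $\calg_1$ and $\calg_2$ to share the tile $G'_1$, i.e.\ to overlap, placing us in the situation of Theorem~\ref{lem 53} and Definition~\ref{resolution} rather than the grafting situation at hand; in the grafting setting $\gamma_1$ does not cross the first arc of $\gamma_2$, so these cases are excluded. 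Running in parallel with this is the second delicate point: pinning down precisely which extreme tile ($G_\delta$ versus $G_{\delta'}$) and which terminating vertex ($a$ versus $b$) each terminal configuration yields, and checking that ``first or last tile $G_\delta$'' is read with the embedding data that puts the end of $\gamma_1$ on the $\Delta'_0$ side rather than in the neighbouring triangle. This orientation bookkeeping, carried out symmetrically for the start and the end of $\gamma_1$, is where the real work of the proof lies.
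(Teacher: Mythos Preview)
Your overall approach --- reduce to the local picture in the first triangle $\Delta'_0$ of $\gamma_2$, classify the chords of $\gamma_1$ through that triangle that separate $p'_0$ from $p'_1$, and translate via the snake-graph dictionary --- is exactly what the paper does; the paper simply compresses the whole argument into one picture and one sentence listing the three geometric possibilities (``$\gamma_1$ crosses $\delta$ and ends at $y$'', ``$\gamma_1$ crosses $\delta'$ and ends at $z$'', ``$\gamma_1$ crosses both $\delta$ and $\delta'$'') before asserting the translation.

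Where you go wrong is in the handling of the $\delta$--$k$ and $\delta'$--$k$ chords. You propose to exclude these by invoking an overlap/grafting dichotomy, but the theorem carries no such hypothesis, and appealing to it here would be circular: the statement is precisely the combinatorial detector for the empty-overlap situation, so you cannot assume you are already in it. The correct resolution is simpler and purely geometric. A chord of $\gamma_1$ from the interior of $\delta$ to the interior of $k$ has both endpoints on sides containing the vertex $z$, so it can be isotoped into an arbitrarily small neighbourhood of $z$; meanwhile the crossing point $p'_1$ of $\gamma_2$ on $k$ can be slid toward $y$. These two pieces are then disjoint, so such a chord does \emph{not} essentially cross the initial segment of $\gamma_2$. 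The $\delta'$--$k$ case is symmetric, pushing toward $y$. Thus these configurations are not missing from the enumeration --- they simply do not produce a crossing with the initial segment, and no extra hypothesis is needed to discard them.

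Your second worry, about pinning down the terminating vertex and the side of $\delta$ on which $\gamma_1$ ends, is legitimate bookkeeping; the paper is equally terse here, recording only ``crosses $\delta$ and ends at $y$'' and declaring that this translates to the first bullet, so the extreme-tile condition is to be read together with the boundary-edge data encoding the terminal triangle.
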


\begin{proof}
 The configuration of the first tile of $\calg_2$ translates into the following picture.
\[
\begin{tikzpicture}[scale=2]
 \draw (0,0)--(1,-.5)--(1,.5)--(0,0);
 \draw (0,0)--(2.4,.4);
 \path (0,0) edge node [pos=.7, fill=white,outer sep=1mm,scale=.7]{$\gamma_1$} (2.4,.4);
 \node[scale=.8] at (.5,-.4) {$\delta'$};
 \node[scale=.8] at (.5,.4) {$\delta$};
 \node[scale=.6] at (0,0) {$\bullet$};
\node[scale=.6] at (1,-.5) {$\bullet$};
\node[scale=.6] at (1,.5) {$\bullet$};
\node[scale=.8] at (-.2,0) {$x$};
\node[scale=.8] at (1.2,-.5) {$y$};
\node[scale=.8] at (1.2,.5) {$z$};
\end{tikzpicture}
\]
Thus $\gamma_1$ crosses the initial segment of $\gamma_2$ if and only if $\gamma_1$ crosses $\delta$ and ends at $y$ or $\gamma_1$ crosses $\delta'$ and ends at $z$ or $\gamma_1$ crosses $\delta$ and $\delta'.$ This translates to the three cases in the statement. 
\end{proof}

\subsection{Smoothing arcs and resolving snake graphs}

\begin{thm} \label{smoothing1} Let $\gamma_1$ and $\gamma_2$ be two arcs which cross with a non-empty local overlap, and let $\calg_1$ and $\calg_2$ be the corresponding snake graphs with overlap $\calg$. Then the  snake graphs of the four arcs obtained by smoothing the crossing of $\gamma_1$ and $\gamma_2$ in the overlap are given by the resolution $\re 12$ of the crossing of the snake graphs $\calg_1$ and $\calg_2$ at the overlap $\calg.$
 
\end{thm}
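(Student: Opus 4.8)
The plan is to work entirely in a neighborhood of the crossing point $p$ and reduce the statement to a bookkeeping of crossing sequences. First I would record the local picture: since $\gamma_1$ and $\gamma_2$ have a non-empty local overlap, they run ``parallel'' across the common strip $\calg_1[s,t]\cong\calg_2[s',t']$, crossing the same arcs $\tau_{i_s},\dots,\tau_{i_t}$ of $T$, and $p$ lies in one of the triangles of this strip. The two smoothings of Definition~\ref{def:smoothing} then reconnect the four strands in the two possible ways: one so that both resulting curves pass straight through the strip (keeping a parallel overlap), the other so that each resulting curve turns back on itself at $p$ (reflecting one of the arcs). Matching the remark after Definition~\ref{resolution}, I expect the first smoothing to yield the pair $(\calg_3,\calg_4)$, which still carries the overlap, and the second to yield the symmetric-difference pair $(\calg_5,\calg_6)$.

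Next I would identify the first pair. The ``straight-through'' arc $\alpha_1$ follows $\gamma_1$ from its start into the strip and then continues along $\gamma_2$ to the end of $\gamma_2$; the crucial point is that within the overlap the two arcs cross the same triangulation arcs, so after an isotopy absorbing the crossing at $p$ the ordered crossing sequence of $\alpha_1$ is exactly $\tau_{i_1},\dots,\tau_{i_t},\tau'_{i'_{t'+1}},\dots,\tau'_{i'_{d'}}$. Reading off the snake graph tile by tile via the construction of Section~\ref{sect graph}, the tiles up to index $t$ are those of $\calg_1[1,t]$ and the rest are those of $\calg_2[t'+1,d']$, glued as dictated by the geometry of $\gamma_2$ at the last overlap triangle, i.e.\ with adjacency induced by $\calg_2$. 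Hence $\calg_{\alpha_1}=\calg_3$, and symmetrically $\calg_{\alpha_2}=\calg_4$. This half is essentially a concatenation of crossing sequences.

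The substantive work is the reflecting smoothing. The arc $\beta_1$ runs in along $\gamma_1$, reaches $p$, and exits along $\gamma_2$ traversed backwards to the start of $\gamma_2$. Because the two arcs are parallel inside the strip, $\beta_1$ crosses a prefix of the overlap arcs and then immediately recrosses the same arcs, producing bigons that are removed by isotopy; after this reduction the crossing sequence is $\tau_{i_1},\dots,\tau_{i_{s-1}}$ followed by $\tau'_{i'_{s'-1}},\dots,\tau'_{i'_1}$, which yields $\calg_1[1,s-1]\cup\ocalg_2[s'-1,1]$, the reflection $\ocalg_2$ appearing because $\gamma_2$ is traversed in reverse. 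The gluing edge between the two blocks is the arc of $T$ at which the backtrack turns, and distinguishing whether $G_s$ lies east or north of $G_{s-1}$ reproduces the gluing rule in the first line of the definition of $\calg_5$. The mirror analysis at the other end of the overlap gives $\beta_2$ and $\calg_6$.

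Finally I would handle the degenerate cases $s'=1$ (and symmetrically $s=1$, $t=d$, $t'=d'$), where one arc begins inside the overlap so there is no pre-overlap part to splice onto. Here the reduced backtracking curve cannot simply terminate at the end of the overlap: it must unwind further and close off at the common triangulation vertex, and how far it unwinds is governed by the fan/zigzag structure along $\gamma_1$. Using the sign/fan dictionary from the proof of Theorem~\ref{lem 53} (Figure~\ref{figfansnake})---that $f_1(e_k)=f_1(e_{s-1})$ precisely when $\tau_{[k]}$ and $\tau_{[s-1]}$ lie on the same side of $\gamma_1$---I would show the terminal tile of $\beta_1$ is the one indexed by the largest such $k$, giving $\calg_1[1,k]$, or, when no such $k$ exists, a single boundary edge $\{e_0\}$ with the prescribed sign. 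Matching all these subcases against the piecewise definition of $\calg_5$ and $\calg_6$ is where most of the case analysis lives, and pinning down exactly where each backtracking curve terminates is the main obstacle; the sign-function formulation of the resolution is engineered so that this geometric termination condition becomes the stated combinatorial one.
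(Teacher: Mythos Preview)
Your proposal is correct and follows essentially the same route as the paper: identify the four smoothed arcs as concatenations $\gamma_{1,1}\cdot\gamma_{2,2}$, $\gamma_{2,1}\cdot\gamma_{1,2}$, $\gamma_{1,1}\cdot\overline\gamma_{2,1}$, $\overline\gamma_{2,2}\cdot\gamma_{1,2}$, read off the crossing sequences (with the isotopy removing the redundant overlap crossings for $\gamma_5,\gamma_6$), and in the degenerate cases $s'=1$ etc.\ invoke the fan/zigzag correspondence to locate the terminal tile via the sign function. Your account of the bigon cancellation and of why the fan governs the termination point is in fact more explicit than the paper's, which simply says the generic case is ``obvious from the construction'' and then treats one representative degenerate case using the same fan-to-sign translation you describe.
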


\begin{remark}
 We do not assume that $\gamma_1$ and $\gamma_2$ cross only once. If the arcs cross multiple times the theorem can be used to resolve any of the crossings.
\end{remark}

\begin{remark} In \cite{MSW2}, the authors considered also smoothing of generalized arcs (allowing self-crossings) and essential loops. The description of the resolutions of the corresponding snake graphs will be given in a forthcoming paper.

\end{remark}

\begin{proof}
 As usual, let $\ta st = \taa {s'}{t'}$ denote the local overlap of $\gamma_1$ and $\gamma_2$ at the crossing under consideration. Then the four arcs obtained by smoothing the crossing are represented by  
\begin{align*}
 \gamma_3=\gamma_{1,1} \cdot \gamma_{2,2} \hspace{.8in} \gamma_5=\gamma_{1,1} \cdot  \overline \gamma_{2,1} 
\\
 \gamma_4=\gamma_{2,1} \cdot \gamma_{1,2} \hspace{.8in} \gamma_6=\overline \gamma_{2,2} \cdot \gamma_{1,2}
\end{align*}  
 \begin{figure}
\begin{center}
\scalebox{0.8}{ 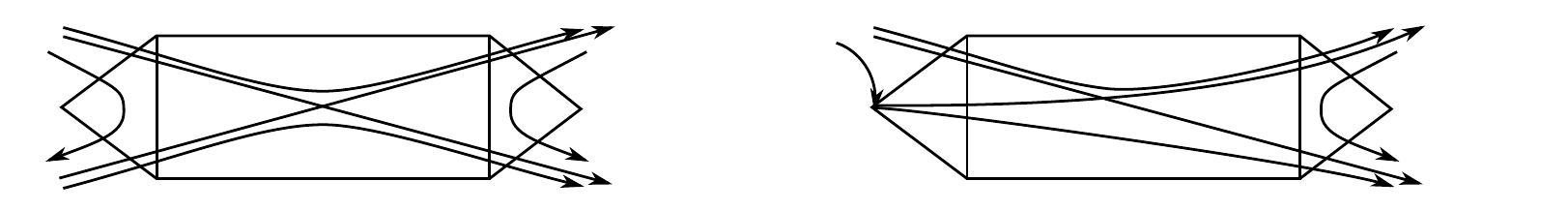}
 \caption{Proof of Theorem \ref{smoothing1}}
 \label{smoothing}
 \end{center}
\end{figure}
(see Figure \ref{smoothing}) 
where $\alpha \cdot \beta$ denotes the concatenation of the paths $\alpha$ and $\beta$ and $\overline \alpha$ denotes the path $\alpha$ with the opposite orientation, $\gamma_{i,1}$ denotes the segment of the arc $\gamma_i$ from its starting point to the crossing point $p$ and $\gamma_{i,2}$ the segment from $p$ to the terminal point, for $i=1,2.$ Recall that arcs are defined up to homotopy, so for example the arc $\gamma_5$ does not cross $\tau_{i_s}.$

Suppose first that $\tp.$ From the construction it is obvious that the snake graphs $\calg_i$ of the arcs $\gamma_i,$ for $i=3,4,5,6,$ are given exactly by the four snake graphs of $\re 12$ in Definition \ref{resolution}.

Now suppose that we do not have $\tp.$ Observe that that we cannot have $s=1$ and $s'=1$ simultaneously because $\gamma_1$ and $\gamma_2$ cross in the overlap, and, similarly, we cannot have $t=d$ and $t'=d'$ simultaneously. Therefore we have to consider the cases where one of the two arcs starts or ends right before or right after the overlap. Since these cases are all symmetric, we may assume without loss of generality that $s'=1$ and $s>1, t<d$ and $t'<d'.$ This case is illustrated on the right side of Figure \ref{smoothing}. In this case, the snake graphs $\calg_3, \ \calg_4$ and $\calg_6$ are the same as in the first case, but $\calg_5$ now is of the form $\calg_5=\calg_1[1,k],$ where $1 \leq k <s-1$ and $k$ is the largest integer such that $\tau_{i_k}$ is not in the maximal $(T,\gamma_1)$-fan ending at $\tau_{i_{s-1}}.$

Indeed, this follows because every arc in this fan crosses $\gamma_1$ but not $\gamma_5,$ since each arc in the fan ends at the endpoint of $\gamma_5.$ Since fans in the triangulation correspond to zigzag subgraphs of the snake graph, we see from the definition of sign functions that any sign function $f_1$ on $\calg_1$ satisfies $f_1(e_{s-1})=f_1(e_\ell)$ for each $k <\ell \leq s-1$ and $f_1(e_{s-1})=-f_1(e_k).$ Therefore $\calg_5$ is exactly as in the definition of $\re 12.$
\end{proof}

So far, we have considered two arcs which cross with a non-empty local overlap. Now we study two arcs which cross with an empty local overlap.

\begin{thm} \label{smoothing2} Let $\gamma_1$ and $\gamma_2$ be two arcs which cross in a triangle $\Delta$ with an empty local overlap, and let $\calg_1$ and $\calg_2$ be the corresponding snake graphs. Assume $\Delta=\Delta'_0$ is the first triangle $\gamma_2$ meets. Then the snake graphs of the four arcs obtained by smoothing the crossing of $\gamma_1$ and $\gamma_2$ in $\Delta$ are given by the resolution $\gt s{\delta_3}12$ of the grafting of $\calg_2$ on $\calg_1$ in $G_s,$ where $0 \leq s \leq d$ is such that $\Delta=\Delta_s$ and if $s=0$ or $s=d$ then $\delta_3$ is the unique side of $\Delta$ that is not crossed by neither $\gamma_1$ nor $\gamma_2.$
\end{thm}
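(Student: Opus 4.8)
The plan is to run the same argument as in the proof of Theorem~\ref{smoothing1}, with the resolution of a crossing replaced by the grafting operation of Definition~\ref{grafting}. First I would record the four arcs produced by smoothing the crossing of $\gamma_1$ and $\gamma_2$ at the point $p\in\Delta$. Writing $\gamma_{i,1}$ and $\gamma_{i,2}$ for the segments of $\gamma_i$ before and after $p$, the smoothing yields
\[
\gamma_3=\gamma_{1,1}\cdot\gamma_{2,2},\qquad
\gamma_4=\gamma_{2,1}\cdot\gamma_{1,2},\qquad
\gamma_5=\gamma_{1,1}\cdot\overline{\gamma_{2,1}},\qquad
\gamma_6=\overline{\gamma_{2,2}}\cdot\gamma_{1,2},
\]
exactly as in the previous proof. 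What is special here is that $\Delta=\Delta'_0$ is the first triangle met by $\gamma_2$, so the initial segment $\gamma_{2,1}$ lies entirely inside $\Delta$ and crosses no arc of $T$; consequently $\gamma_{2,2}$ accounts for all crossings of $\gamma_2$, while $\gamma_{1,1}$ and $\gamma_{1,2}$ cross $\tau_{i_1},\dots,\tau_{i_s}$ and $\tau_{i_{s+1}},\dots,\tau_{i_d}$ respectively, where $s$ is the index with $\Delta=\Delta_s$.

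Next I would match each smoothed arc to the corresponding piece of $\gt s{\delta_3}12$. The arcs $\gamma_3$ and $\gamma_6$ are honest concatenations, so the recursive gluing of tiles applies verbatim: the snake graph of $\gamma_3$ is $\calg_1[1,s]$ glued to $\calg_2$ along the pair of edges $\delta_3,\delta'_3$, and the snake graph of $\gamma_6$ is $\ocalg_2[d',1]$ glued to $\calg_1[s+1,d]$ along $\delta_5,\delta'_5$, the reflection $\ocalg_2[d',1]$ being forced by the reversed orientation $\overline{\gamma_{2,2}}$. Here I would check that the prescription in the definition of grafting --- $\delta_3$ the north (respectively east) edge of $G_s$ according as $G_{s+1}$ is east (respectively north) of $G_s$, and $\delta'_3$ the matching side of $G'_1$ --- agrees with the side of $\Delta$ through which $\gamma_2$ leaves, which is exactly the geometric edge along which the two snake graphs are glued.

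The hard part will be the two truncated arcs $\gamma_4$ and $\gamma_5$, and this is where the sign-function data of the grafting definition is needed. Since $\gamma_4=\gamma_{2,1}\cdot\gamma_{1,2}$ begins at the vertex $v$ of $\Delta$ rather than at the starting point of $\gamma_1$, up to homotopy it skips the initial maximal $(T,\gamma_1)$-fan of arcs incident to $v$: by the correspondence between fans and zigzag subgraphs used in the proof of Theorem~\ref{lem 53}, every arc of that fan crosses $\gamma_1$ but not $\gamma_4$, so the snake graph of $\gamma_4$ is the tail $\calg_1[k_4,d]$ with $k_4$ the least index past $s+1$ at which the sign changes, namely $f_1(e_s)=-f_1(e_{k_4-1})$; the analogous count for $\gamma_5$ gives $\calg_1[1,k_5]$. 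I expect the genuinely delicate points to be the degenerate cases in which an entire fan is absorbed, so that $\gamma_4$ (respectively $\gamma_5$) collapses to a single arc of $T$ and its snake graph is the single edge $\{e_d\}$ (respectively $\{e_0\}$); there the combinatorics must be read directly off the geometry of the fan at $v$. Finally I would treat the boundary case $s=d$, i.e.\ Case~2 of Definition~\ref{grafting}, in which $\gamma_1$ terminates in $\Delta$, forcing $\gamma_{1,2}$ to be empty, $\gamma_4$ to collapse to the single edge $\delta_3$, and $\delta_3$ to be chosen as the side of $\Delta$ crossed by neither arc; the case $s=0$ follows from $s=d$ by reversing the orientation of $\gamma_1$, and by the symmetry already used in Theorem~\ref{smoothing1} these cases exhaust the statement.
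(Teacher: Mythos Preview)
Your proposal is correct and follows essentially the same approach as the paper: the paper likewise writes down the four concatenated arcs $\gamma_3,\gamma_4,\gamma_5,\gamma_6$, identifies $\calg_3$ and $\calg_6$ as direct gluings along the edges labeled $\tau_{i_{s+1}}$ and $\tau_{i_s}$, and then uses the maximal $(T,\gamma_1)$-fan through $\tau_{i_s},\tau_{i_{s+1}}$ (with vertex the starting point of $\gamma_2$) together with the fan--zigzag--sign correspondence to pin down $k_4$ and $k_5$, treating the cases $s=d$ and $s=0$ separately. The only point worth making explicit, which the paper does, is that in the boundary case $s=d$ a second fan $\mathcal F_2$ on $\gamma_2$ (containing $\tau'_{i'_1}$ and $\gamma_4$) is needed to locate $k_6$ in the description of $\calg_6$; your sketch gestures at Case~2 of Definition~\ref{grafting} but does not spell this out.
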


\begin{proof}
 As before, let $\gamma_{i,1}$ denote the segment of $\gamma_i$ from the starting point until the crossing point, and let $\gamma_{i,2}$ denote the segment   from the crossing point to the terminal point. Then the four arcs obtained by smoothing are represented by  
\begin{align*}
 \gamma_3=\gamma_{1,1} \cdot \gamma_{2,2} \hspace{.8in} \gamma_5=\gamma_{1,1} \cdot  \overline \gamma_{2,1} 
\\
 \gamma_4=\gamma_{2,1} \cdot \gamma_{1,2} \hspace{.8in} \gamma_6=\overline \gamma_{2,2} \cdot \gamma_{1,2}
\end{align*}
where $\alpha \cdot \beta$ denotes the concatenation of the paths $\alpha$ and $\beta$ and $\overline \alpha$ is the path $\alpha$ with the opposite orientation.

Suppose first that $\Delta=\Delta_s$ with $0 <s<d.$ Thus $\gamma_1$ crosses two sides $\tau_{i_s}$ and $\tau_{i_{s+1}}$ of $\Delta$ and $\gamma_2$ crosses the third side $\tau_{i'_1}$ of $\Delta,$ see the left picture in Figure \ref{smoothing3}.

 \begin{figure}
\begin{center}
\scalebox{1}{ 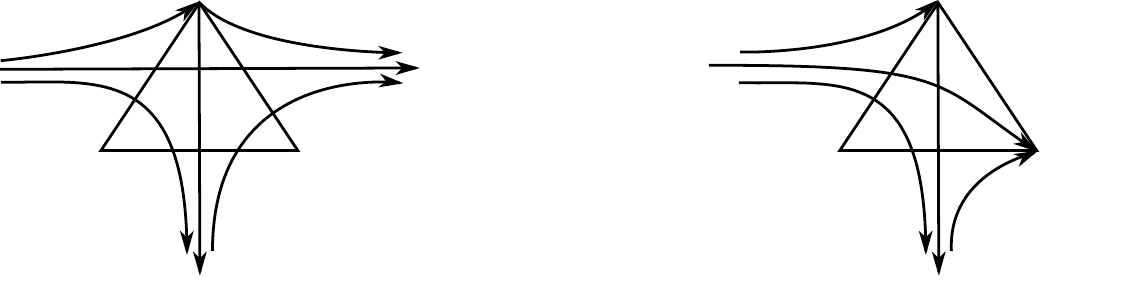}
 \caption{Proof of Theorem \ref{smoothing2}}
 \label{smoothing3}
 \end{center}
\end{figure}
 
Let $\calf$ denote the maximal $(T,\gamma_1)$-fan containing the arcs $\tau_{i_s}$ and $\tau_{i_{s+1}}$ and determined by the crossing. Then the snake graphs $\calg_i$ of the arcs $\gamma_i,$ for $i=3,4,5,6$ are given as follows.  
\begin{flalign*}
\calg_3&=\calg_1[1,s] \cup \calg_2  \hspace{.4in}  \parbox[t]{.55\textwidth}{ glued along the edge of $G_s$ and $G'_1$ labeled $\tau_{i_{s+1}}$}; \\
\calg_4&=
\begin{cases}
\calg_1[k_4,d] &  \parbox[t]{.55\textwidth}{ where $k_4 >s+1$  is the least integer such that $\tau_{i_{k_4}} \not \in \calf,$ if such a $k_4$ exists;}\\
\{\gamma_4\}&  \parbox[t]{.55\textwidth}{ otherwise;}
\end{cases} \\
\calg_5&=
\begin{cases}
\calg_1[1,k_5] &  \parbox[t]{.55\textwidth}{ where $1 \leq k_5 <s$  is the largest integer such that $\tau_{i_{k_5}} \not \in \calf,$ if such a $k_5$ exists;}\\
\{\gamma_5\} &  \parbox[t]{.55\textwidth}{ otherwise;}
\end{cases} \\
\calg_6&=\ocalg_2 \cup \calg_1[s+1,d]  \hspace{.4in}  \parbox[t]{.55\textwidth}{ glued along the edge of $G_{s+1}$ and $G'_1$ labeled $\tau_{i_{s}}$};
\end{flalign*}
Note that if the $k_4$ in the description of $\calg_4$ does not exist, then $\gamma_4$ itself must be in the fan $\mathcal{F}$, hence $\gamma_4\in T$ and $\calg_4=\{\gamma_4\}$. Similarly,  if the $k_5$  in the description of $\calg_5$ does not exist, then $\calg_5=\{\gamma_5\}$. 
Thus these are exactly the snake graphs in $\gt s{\delta_3}12.$

Now suppose that $\Delta=\Delta_d$ is the last triangle $\gamma_1$ meets, see the right side of Figure \ref{smoothing3}. Then  
\begin{align*}
 \gamma_3=\gamma_{1,1} \cdot \gamma_{2,2} \hspace{.8in} \gamma_5=\gamma_{1,1} \cdot  \overline \gamma_{2,1} 
\\
 \gamma_4\in T \hspace{.8in} \gamma_6=\overline \gamma_{2,2} \cdot \gamma_{1,2}
\end{align*}
and $\gamma_4$ is the unique side of the triangle $\Delta$ that is not crossed by $\gamma_1$ or $\gamma_2.$ Let $\calf_1$ be the maximal $(T,\gamma_1)$-fan  containing $\tau_{i_d}$ and $\gamma_4,$ and let $\calf_2$ be the maximal $(T,\gamma_2)$-fan  containing $\tau'_{i'_1}$ and $\gamma_4.$ Then the snake graphs of the arcs $\gamma_i$ for $i=$ 3, 4, 5,  6 are the following.

\begin{flalign*}
\calg_3&=\calg_1 \cup \calg_2  \hspace{.4in}  \parbox[t]{.55\textwidth}{ glued along the edge $G_d$ and $G'_1$ labeled $\gamma_{4}$}; \\
\calg_4&= \{\gamma_4\} \\
\calg_5&=
\begin{cases}
\calg_1[1,k_5] &  \parbox[t]{.55\textwidth}{ where $1 \leq k_5 <d$  is the largest integer such that $\tau_{i_{k_5}} \not \in \calf_1$ if such a $k_5$ exists;}\\
\{\gamma_5\} &  \parbox[t]{.55\textwidth}{ otherwise;}
\end{cases}\\
\calg_6&=
\begin{cases}
\ocalg_1[1,k_6] &  \parbox[t]{.55\textwidth}{ where $1 \leq k_5 <d$ and $k_6$ is the largest integer such that $\tau'_{i'_{k_6}} \not \in \calf_2$ if such a $k_6$ exists;}\\
\{\gamma_6\} &  \parbox[t]{.55\textwidth}{ otherwise;}
\end{cases}
\end{flalign*}

 Since fans in the triangulation correspond to zigzag subgraphs in the snake graph, we see from the definition of sign functions that these snake graphs are exactly the ones in the definition of $\gt s{\delta_3}12$ with $\delta_3=\gamma_4.$
 
 The case $\Delta=\Delta_0$ is similar to the case $\Delta=\Delta_d$ and therefore left to the reader.
\end{proof}


\section{Products of cluster variables}\label{sect products}\label{sect 6}

Let $\cala= \cala(S,M,T)$ be the cluster algebra associated to the surface $(S,M)$ with principal coefficients in the initial seed $\sum_T = (\bx_T, \by_T, Q_T)$ at the triangulation $T= \{ \tau_1, \tau_2, \ldots, \tau_n \}$ where
\begin{align*}
 \bx =(x_i | i=1, \ldots, n, \tau_i \in T) \\
  \by =(y_i | i=1, \ldots, n, \tau_i \in T)
\end{align*}


\subsection{Non-empty overlaps}

If $\calg$ is a snake graph associated to an arc $\gamma$ in a triangulated surface $(S,M,T)$ then each tile of $\calg$ corresponds to a quadrilateral in the triangulation $T,$ and we denote by $\tau_{i(G)}\in T$ the diagonal of that quadrilateral. With this notation we define
\begin{align*}
 x(\calg) = \prod_{ G \mbox{ tile in } \calg} x_{i(G)}\\
 y(\calg) = \prod_{ G \mbox{ tile in } \calg} y_{i(G)}
\end{align*}
If $\calg=\{\tau\}$ consists of a single edge, we let $x(\calg)=1$ and $y(\calg)=1$.

Let $\gamma_1$ and $\gamma_2$ be two arcs which cross with a non-empty local overlap. Let $x_{\gamma_1}$ and $x_{\gamma_2}$ be the corresponding cluster variables and $\calg_1$ and $\calg_2$ the snake graphs with corresponding overlap $\calg.$ 
Recall that $\re 12$ consists of two pairs $(\calg_3,\calg_4)$ and $(\calg_5,\calg_6)$ of snake graphs.

Define the {\em closure} $\tcalg$ {\em  of the overlap} $\calg$  to be the union of all tiles in $\calg_1 \cup \calg_2$ which are not in $\calg_5 \cup \calg_6.$ Let $\re 12$ be the resolution of the crossing of $\calg_1$ and $\calg_2$ at $\calg$, and 
define the {\em Laurent polynomial of the resolution} by

\begin{equation*}
 \call (\re 12) = \call (\calg_3 \sqcup \calg_4) + y( \tcalg) \call (\calg_5 \sqcup \calg_6)
\end{equation*}

where 

\begin{equation*}
 \call (\calg_k \sqcup \calg_\ell) = \frac{1}{x(\calg_k)x(\calg_\ell)} \sum_{P \in \ma k\ell} x(P) y(P)
\end{equation*}

\begin{thm} \label{laurent}
 Let $\gamma_1$ and $\gamma_2$ be two arcs which cross with a non-empty local overlap and let $\calg_1$ and $\calg_2$ be the corresponding snake graphs with local overlap $\calg.$ Then
 
\begin{equation*}
 \call(\calg_1 \sqcup \calg_2) = \call (\re 12)
\end{equation*}
\end{thm}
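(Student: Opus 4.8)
The plan is to derive the identity directly from the bijection $\varphi$ of Theorem \ref{bijections}, by upgrading it to a \emph{weight-preserving} bijection. First I would clear the common denominator. A perfect matching of $\calg_1\sqcup\calg_2$ is a pair $(P_1,P_2)$, and since the minimal matching of a disjoint union splits as the disjoint union of the minimal matchings, both $x$ and $y$ are multiplicative over components; hence $\call(\calg_1\sqcup\calg_2)=\frac{1}{x(\calg_1)x(\calg_2)}\sum_{(P_1,P_2)}x(P_1)x(P_2)\,y(P_1)y(P_2)$, and similarly for the two pairs in the resolution. A count of tiles then yields the two normalizations $x(\calg_1)x(\calg_2)=x(\calg_3)x(\calg_4)$ and $x(\calg_1)x(\calg_2)=x(\tcalg)\,x(\calg_5)x(\calg_6)$: the tiles of $\calg_3\sqcup\calg_4$ are, as a multiset, exactly those of $\calg_1\sqcup\calg_2$ regrouped, while by definition $\tcalg$ is precisely the set of tiles of $\calg_1\sqcup\calg_2$ that do not survive into $\calg_5\sqcup\calg_6$.

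After multiplying through by $x(\calg_1)x(\calg_2)$ and splitting $\match(\calg_1\sqcup\calg_2)$ according to whether $\varphi$ lands in $\match(\calg_3\sqcup\calg_4)$ or $\match(\calg_5\sqcup\calg_6)$, the theorem reduces to the two weight identities for $w(P):=x(P)y(P)$:
\[ w(P_1,P_2)=w(\varphi(P_1,P_2)) \]
on the first part of the partition, and
\[ w(P_1,P_2)=x(\tcalg)\,y(\tcalg)\,w(\varphi(P_1,P_2)) \]
on the second.

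To prove the first identity I would exploit the surface geometry: by Theorem \ref{lem 53} the overlap occurs as $\calg_1[s,t]$ and as $\calg_2[s',t']$, and these two copies carry identical edge labels and identical diagonal labels since the arcs $\tau_{i_s},\dots,\tau_{i_t}$ coincide with $\tau'_{i'_{s'}},\dots,\tau'_{i'_{t'}}$. Away from the switching place, $\varphi(P_1,P_2)$ uses literally the same labeled edges as $(P_1,P_2)$ and, as the minimal matchings of the glued graphs agree with those of the pieces outside the overlap, encloses the same tiles; so these regions contribute equally to $w$. At the switching place I would check the finitely many local configurations of Figures \ref{mu}, \ref{figrho} and \ref{nu}: an apparent change of a matched pair from north--south to east--west in one copy of the overlap is always compensated by the reverse change in the other copy, so both the product of matched side-labels and the family of tiles enclosed by the symmetric difference are unchanged.

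The hard part will be the second identity, governing case (iv), where no switching place exists (the configurations of Figures \ref{mu2} and \ref{figrho2}). Here $P_1$ and $P_2$ restrict on the overlap to the two extreme matchings, the resolution discards the entire overlap, and the symmetric differences force all overlap tiles, in both copies, to become enclosed. The main obstacle is to show that the matched overlap side-edges together with these newly enclosed tiles assemble into exactly the monomial $x(\tcalg)\,y(\tcalg)$ relative to $w(\varphi(P_1,P_2))$; this is the one place where the bookkeeping mixes the side-labels contributing to $x(P)$ with the diagonal labels contributing to $x(\tcalg)$ and $y(\tcalg)$, and it is settled by the explicit behavior of $P_-$ on the glued graphs $\calg_5,\calg_6$ near their gluing edges. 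Once both identities hold, I would dispatch the boundary cases $\tpp$ exactly as in the definition of $\varphi$ (ignoring the tiles $G_{s-1},G'_{s'-1},G_{t+1},G'_{t'+1}$, and replacing $\mu$ by $\nu$ when $s=t$), the same local checks applying verbatim. Summing the two identities over the partition gives $\call(\calg_1\sqcup\calg_2)=\call(\calg_3\sqcup\calg_4)+y(\tcalg)\call(\calg_5\sqcup\calg_6)=\call(\re 12)$.
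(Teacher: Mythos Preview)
Your overall strategy is exactly the paper's: clear denominators, split the sum along the bijection $\varphi$ of Theorem~\ref{bijections}, and reduce to two weight identities (this is precisely the paper's Lemma~\ref{lem 72}). The first weight identity and your sketch of its proof match Lemma~\ref{lem 72}(a) verbatim.

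There is, however, a concrete error in your tile count on the $56$-branch. By definition, $\tcalg$ is the set of tiles in $\calg_1\cup\calg_2$ (ordinary union) that are not in $\calg_5\cup\calg_6$; in particular the overlap $\calg$ sits inside $\tcalg$ only \emph{once}, whereas in the multiset of tiles of $\calg_1\sqcup\calg_2$ it appears \emph{twice}. Hence your normalization $x(\calg_1)x(\calg_2)=x(\tcalg)\,x(\calg_5)x(\calg_6)$ is false; the correct identity (equation~(\ref{eq1}) in the paper) is
\[
x(\calg_1)\,x(\calg_2)\;=\;x(\calg_5)\,x(\calg_6)\,x(\tcalg)\,x(\calg),
\]
with an extra factor $x(\calg)$. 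Correspondingly, the weight identity you must establish in case~(iv) is
\[
w(P_1,P_2)\;=\;x(\tcalg)\,x(\calg)\,y(\tcalg)\,w(\varphi(P_1,P_2)),
\]
which is Lemma~\ref{lem 72}(b). Your two omissions of $x(\calg)$ happen to cancel in the final formula, so the architecture survives; but as written both intermediate claims are false, and you would discover the missing factor the moment you actually carried out the ``hard bookkeeping'' you describe. The paper does exactly that bookkeeping: it computes the product of boundary-edge weights along the overlap explicitly (equations~(\ref{eq3})--(\ref{eq6})) and shows that the discrepancy between $x(P)$ and $x(\varphi_{56}(P))$ is $x(\calg)\,x(\tcalg)$, not $x(\tcalg)$ alone.
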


\begin{proof}
 First we note that
\begin{equation} \label{eq1}
  x(\calg_1 \sqcup \calg_2) = x(\calg_3 \sqcup \calg_4) = x(\calg_5 \sqcup \calg_6) x(\tcalg) x(G),
\end{equation}
where the first identity holds because $\calg_1 \sqcup \calg_2$ and $\calg_3 \sqcup \calg_4$ have the same set of tiles, and the second identity holds because $\tcalg$ consists of the tiles of $(\calg_1 \cup \calg_2) \backslash (\calg_5 \cup \calg_6)$ and the overlap $\calg$ consists of the tiles that appear in both $\calg_1$ and $\calg_2.$

Using the equation (\ref{eq1}) on the definition of $\la 12$ as well as the bijection $\varphi=(\varphi_{34},\varphi_{56}):\ma 12 \to \match\re 12$ of Theorem \ref{bijections}, we obtain
\begin{align*}
 \la 12 = &\displaystyle \frac{1}{x(\calg_3 \sqcup \calg_4)} \sum_{\varphi_{34}(P) \in \ma 34} x(P)y(P)\\&+ \displaystyle \frac{1}{x(\calg)x(\tcalg)x(\calg_5 \sqcup \calg_6)} \sum_{\varphi_{56}(P) \in \ma 56} x(P)y(P).
\end{align*}

On the other hand,

\begin{equation*}
 \call(\re 12) = \la 34 + y(\tcalg) \la 56
\end{equation*}

and therefore it suffices to show the following lemma.
\end{proof}

\begin{lem} \label{lem 72} Let $P\in \Match(\calg_1\sqcup\calg_2)$.
\begin{itemize}
 \item[(a)] $x(\varphi_{34}(P))\ y(\varphi_{34}(P))=x(P) y(P)$ if $\varphi(P) \in \ma 34$.
 \item[(b)] $x(\varphi_{56}(P))\ y(\varphi_{56}(P))=x(P)y(P) / x(\tcalg) x(\calg) y(\tcalg) $ if $\varphi(P) \in \ma 56$.
\end{itemize}
\end{lem}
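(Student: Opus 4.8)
The plan is to prove both identities by tracking how the weight $x(P)y(P)$ changes under the local surgery that defines $\varphi$. The key observation is that $\varphi$ alters a matching $P=(P_1,P_2)$ of $\calg_1\sqcup\calg_2$ only inside a bounded \emph{window} around the switching place: in cases (i)--(iii) the edges of $\varphi(P)$ lying outside the window are literally the edges of $P$, merely reassigned between the two components of $\calg_3\sqcup\calg_4$, while the window contributes one of the local pieces $\mu$, $\rho$, or $\nu$ of Figures~\ref{mu}, \ref{figrho}, \ref{nu}; in case (iv) the overlap is excised and the surviving pieces are reglued as in Definition~\ref{resolution}. Thus each identity reduces to a computation that is local, in the sense that everything away from the window (resp.\ away from the overlap) cancels automatically.

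First I would establish the $x$-part of (a). Since $\calg_1\sqcup\calg_2$ and $\calg_3\sqcup\calg_4$ carry the same multiset of tiles and the reassignment of edges outside the window preserves their labels, it suffices to check that each local operation in Figures~\ref{mu}, \ref{figrho}, \ref{nu} replaces the edges of $(P_1,P_2)$ on the window by a configuration covering the same vertices and carrying the same multiset of edge labels. This is a finite inspection of the eight, four, and one configurations drawn there, using that corresponding tiles of $\calg_1$ and $\calg_2$ inside the overlap are labelled by the same arcs of $T$. Combined with the identity $x(\calg_1)x(\calg_2)=x(\calg_3)x(\calg_4)$ coming from equation~(\ref{eq1}), this yields $x(\varphi_{34}(P))=x(P)$.

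Next comes the $y$-part of (a), which I expect to be the main obstacle. The height $y(P)$ of Definition~\ref{height} is defined \emph{non-locally}, through the set of tiles enclosed by the cycles of $P_-\ominus P$, whereas $\varphi$ acts locally; so the real content is to show that the switch is compatible with the minimal matchings. Writing $Q_-$ for the minimal matching of $\calg_3\sqcup\calg_4$, I would prove that the canonical tile-correspondence between $\calg_1\sqcup\calg_2$ and $\calg_3\sqcup\calg_4$ carries the enclosed-tile set of $P_-\ominus P$ onto that of $Q_-\ominus\varphi_{34}(P)$. Concretely this amounts to checking, again configuration by configuration in Figures~\ref{mu}, \ref{figrho}, \ref{nu}, that applying the same surgery to $P_-$ produces $Q_-$ on the window and leaves the cycle structure of the symmetric difference unchanged off the window; the sign-function bookkeeping at $e_{s-1},e_s,e_t$ governed by Definition~\ref{crossing} is precisely what guarantees that no enclosed tile is gained or lost. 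Together with the $x$-part this gives $x(\varphi_{34}(P))\,y(\varphi_{34}(P))=x(P)\,y(P)$.

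Finally, for (b) I would analyse case (iv). Here no switching place occurs, so by Figures~\ref{mu2} and \ref{figrho2} the restriction of $(P_1,P_2)$ to the overlap is confined to the non-switching patterns; excising it and regluing as in Definition~\ref{resolution} removes exactly the tiles of $\tcalg$ together with those of the overlap $\calg$. The $x$-part then reduces, via equation~(\ref{eq1}) (which gives $x(\calg_5)x(\calg_6)\big/\big(x(\calg_1)x(\calg_2)\big)=1\big/\big(x(\tcalg)x(\calg)\big)$), to checking that the edge labels discarded with the excised matching, corrected by the glueing edges $a_5,a_6$, multiply to the $P$-independent monomial $x(\tcalg)x(\calg)$; the $y$-part reduces to checking that the enclosed tiles lost are exactly those of $\tcalg$, contributing $y(\tcalg)$. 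Both facts are forced by the non-switching shape of the excised piece, whence $x(\varphi_{56}(P))\,y(\varphi_{56}(P))=x(P)\,y(P)\big/\big(x(\tcalg)\,x(\calg)\,y(\tcalg)\big)$, completing the proof. The one delicate point throughout is the enclosed-tile bookkeeping for $y$, since it is the only genuinely global ingredient and the only place where the combinatorics of Definition~\ref{crossing} must be invoked rather than a mere label count.
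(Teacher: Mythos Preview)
Your proposal is correct and tracks the paper's argument closely. Two points of comparison are worth noting.

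For part (a), you flag the $y$-identity as the main obstacle and propose to track the enclosed-tile set by showing that the surgery carries $P_-$ to $Q_-$ on the window. The paper handles this more directly: since the operations $\mu,\rho,\nu$ are local and the tiles (with their relative orientations) of $\calg_1\sqcup\calg_2$ and $\calg_3\sqcup\calg_4$ coincide, one simply inspects each configuration in Figures~\ref{mu}, \ref{figrho}, \ref{nu} and checks that the height contribution from the window is unchanged. Your invocation of Definition~\ref{crossing} here is unnecessary; the crossing hypothesis is used to set up $\varphi$, but once one is in cases (i)--(iii), the $y$-preservation is a purely local fact about $\mu,\rho,\nu$.

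For part (b), the paper is considerably more concrete than your sketch. After disposing of the $y$-part in one line (the glueing edges are interior to $\calg_5,\calg_6$, so removing them does not affect height, whence $y(\varphi_{56}(P))=y(P)/y(\tcalg)$), it computes $x(\varphi_{56}(P))$ explicitly by writing out the boundary-edge product on the overlap, and proves the required identity for $x(\calg)x(\tcalg)$ by induction on the number of overlap tiles $\ell=t-s+1$. This is then redone, with modifications, for the boundary cases such as $s'=1$, where $\tcalg$ strictly contains $\calg$ and an additional zigzag piece $\calg_1[k+1,s-1]$ must be accounted for separately. Your conceptual outline is sound, but to be complete it would have to unfold into this same case analysis and induction.
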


\begin{proof}
 Let $P \in \ma 12$ and let $P_i$ be its restriction to $\calg_i,$  $ i=1,2.$
 
(a) Since the mapping $P \to \varphi_{34}(P)$ preserves the weight of the edges on the matching, we have $x(\varphi_{34}(P))=x(P).$ The identity $y(\varphi_{34}(P))=y(P)$ follows from an inspection of the eight cases in Figure~\ref{mu} and the four cases of Figure~\ref{figrho}.

(b)  By definition
\begin{equation} \label{eq2}
 \varphi_{56}(P) = \restr{P}{\calg_5 \sqcup \calg_6} \backslash \{  \mbox{ glueing edges }   \}
\end{equation}
where the glueing edge for $\calg_5$ has weight $x_{i_s}$ if $s>1$ and $s'>1,$ and in all other cases $\calg_5$ has no glueing edge, and the glueing edge for $\calg_6$ has weight $x_{i_t}$ if $t<d$ and $t' < d',$ and in all other cases $\calg_6$ has no glueing edge. Since the glueing edges are in the interior of $\calg_5$ and $\calg_6,$ removing them from the matching does not change the $y$-monomials. Thus
\begin{equation*}
 y(\varphi_{56}(P)) = y(P ) \big / y(\tcalg)
\end{equation*}

It remains to study the $x$-monomials. First note that if $P_1$ or $P_2$ contains an interior edge of $\tcalg$ then $\varphi(P)$ would be in $\ma 34.$ Thus in our situation, we must have that on $\tcalg$ both matchings $P_1$ and $P_2$ consist of boundary edges of $\tcalg.$ Moreover, since $\varphi(P) \in \ma 56,$ it follows that $P_1$ and $P_2$ must be as in Figure \ref{mu2} and Figure~\ref{figrho2}
for all tiles in the overlap. In particular,  $P_1$ and $P_2$ do not share an edge on $\tcalg.$
 
Therefore equation (\ref{eq2}) implies that if $s>1, s'>1, t<d$ and $t' <d'$ then
\begin{align} \label{eq3}
 x(\varphi_{56}(P)) = \frac{x(\restr{P}{\calg_5 \sqcup \calg_6})}{x_{i_s}x_{i_t}}= \frac{x(P)}{x_{i_s}x_{i_t}}\frac{x_{i_{s-1}}x_{i'_{s'-1}}x_{i_{t+1}} x_{i'_{t'+1}}}{\displaystyle \prod_{\tau \in {\partial \calg}} x_{\tau}}
\end{align}
where the product runs over all boundary edges of $\calg.$ Indeed, in this case $\widetilde \calg =\calg$ and  the four variables $x_{i_{s-1}}, x_{i'_{s'-1}}, x_{i_{t+1}}$ and $x_{i'_{t'+1}}$ appear in the numerator, because these are the weights of the four edges which connect $\calg$ with $\calg_1 \backslash \calg$ and with $\calg_2 \backslash \calg.$ These edges are boundary edges in $\calg$ but interior edges in $\calg_1,$ respectively $\calg_2.$

To finish the proof in the case where $\tp$ it suffices to show that 
\begin{align}\label{eq4} 
 x(\calg) x(\tcalg) = \frac{x_{i_s}x_{i_t}}{x_{i_{s-1}} x_{i'_{s'-1}} x_{i_{t+1}} x_{i'_{t'+t1}}} \prodd x_{\tau}
\end{align}

We proceed by induction on $\ell=t-s+1$ the number of tiles in $\calg.$ If $\ell=1,$ then $\calg$ consists of a single tile with boundary weights $x_{i_{s-1}},\ x_{i'_{s'-1}},\ x_{i_{t+1}},$ $x_{i'_{t'+1}}.$ Moreover, $s=t$, and $\calg= \tcalg$, since $\tp$. This proves equation (\ref{eq4}) in this case.

Suppose now that $\ell >1.$ Then $x(\widetilde \calg)=x(\calg) = x_{i_s} x_{i_{s+1}} \ldots x_{i_t}.$ Moreover, except for the four extremal edges with weights $x_{i_{s-1}},\ x_{i'_{s'-1}},\ x_{i_{t+1}},\ x_{i'_{t'+1}},$ every boundary edge of $\calg$ has weight $x_{i_s}, x_{i_{s+1}}, \ldots, x_{i_{t-1}}$ or $x_{i_t},$ each of the weights $x_{i_{s+1}}, \ldots, x_{i_{t-1}}$ appears exactly twice and the weights $x_{i_s}$ and $x_{i_t}$ appears exactly once in the boundary of $\calg$. Thus
\begin{align*}
 \prodd x_{\tau} = {x_{i_{s+1}}^2} \cdots {x_{i_{t-1}}^2} x_{i_s} x_{i_t}x_{i_{s-1}}x_{i'_{s'-1}}x_{i_{t+1}}x_{i'_{t'+1}}
\end{align*}
 which shows (\ref{eq4}) in this case.

Now suppose $s'=1.$ Then $s>1,$ because $\gamma$ and $\gamma'$ cross in the overlap. Suppose first that $t<d$ and $t'<d'.$ Then it follows from the definition of $\varphi$ that the matching $P=(P_1,P_2)$ must be of the form shown in Figure \ref{mu2}, but with the tile $G'_{s'-1}$ removed. Only the third case in Figure \ref{mu2} defines a matching after removing $G'_{s'-1},$ thus $P=(P_1,P_2)$ is of the form 
shown in Figure \ref{figlem72}.

 \begin{figure}
\begin{center}
\scalebox{0.8}{ 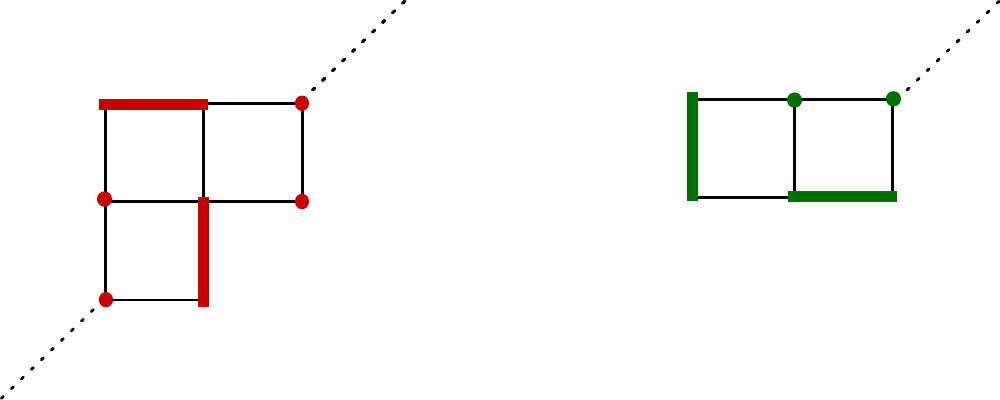}
 \caption{Proof of Lemma \ref{lem 72}}
 \label{figlem72}
 \end{center}
\end{figure}

Let $k < s-1$ be the largest integer such that the sign function of $\calg_1$ agrees on the edges $e_k$ and $e_{s-1}$ (if such a $k$ exists) as in the definition of $\calg_5.$ Then we have a situation similar to the  example shown in Figure \ref{figlem72b}, where $k=s-6.$ The red edges are in $P_1,$ and the green edges are in $P_2.$
 \begin{figure}
\begin{center}
\scalebox{0.8}{ 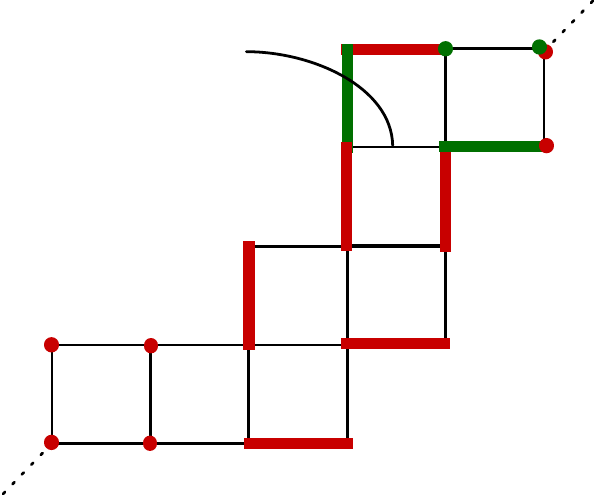}
 \caption{Proof of Lemma \ref{lem 72}}
 \label{figlem72b}
 \end{center}
\end{figure}
The tile $G_k$ is matched by $\varphi_{56}(P).$ Therefore
\begin{equation} \label{eq5}
 x(\varphi_{56}(P)) = \frac{{x(P)}\,x_{[s-1]} x_{i_{s-1}} x_{i_{t+1}} x_{i'_{t'+1}} }{{x_{i_t}} \left(   \displaystyle \prod_{j=k+1}^s x_{i_j}   \right)  \left(  \prodd x_{\tau}  \right) }.
\end{equation}
Note  that $x_{i_{s-1}}$ appears in both products in the denominator and once in the numerator. To finish the proof in this case, it suffices to show that
\begin{equation} \label{eq6}
 x(\calg) x(\tcalg) = \frac{x_{i_t}}{x_{[s-1]} x_{i_{s-1}} x_{i_{t+1}} x_{i'_{t'+1}}} \left(   \displaystyle \prod_{j=k+1}^s x_{i_j}   \right)  \left(  \prodd x_{\tau}  \right).
\end{equation}
Again we proceed by induction on $\ell=t-s+1.$ If $\ell=1$ then $\calg$ consists of a single tile and the statement follows simply by computing
\[\begin{array} {rclrcl}
 \prodd x_{\tau} & =& x_{[s-1]} x_{i_{s-1}} x_{i_{t+1}} x_{i'_{t'+1}} &\\
 x(\calg) & =& x_{i_s} =x_{i_t} & x(\tcalg) &=& \displaystyle \prod_{j=k+1}^s x_{i_j}.
\end{array}\]

Suppose now that $\ell >1.$ Then
\begin{align*}
 x(\calg)x(\tcalg)= \left(\pprod {j=s}{t}{x_{i_j}} \right)\left(\pprod {j=k+1}{t}{x_{i_j}}\right) = \left(\pprod {j=k+1}{s}{x_{i_j}}\right) x_{i_s} \left(\pprod {j=s+1}{t}{x_{i_j}^2}\right)
\end{align*}

On the other hand, except for the four extremal edges with weights $x_{[s-1]},$ $x_{i_{s-1}},\ x_{i_{t+1}},$ and $ x_{i'_{t'+1}},$ every boundary edge of $\calg$ has weight $x_{i_s}, x_{i_{s+1}}, \ldots,$ $x_{i_{t-1}}$ or $x_{i_t},$ the weights $x_{i_{s+1}}, \ldots, x_{i_{t-1}}$ appear exactly twice and $x_{i_s}$ and $x_{i_t}$ exactly once. Thus
\begin{align*}
 \prodd x_{\tau} = \left(  \pprod {j=s+1}{t-1}{x_{i_j}^2}  \right)  x_{i_s} x_{i_t} x_{[s-1]} x_{i_{s-1}} x_{i_{t+1}} x_{i'_{t'+1}}
\end{align*}
and this shows equation (\ref{eq6}).
The cases where $s=1, t=d$ or $t'=d'$ are similar.
\end{proof}

\subsection{Empty overlaps}

Now let $\gamma_1$ and $\gamma_2$ be two arcs which cross in a triangle $\Delta$ with an empty overlap. We may assume without loss of generality that $\Delta$ is the first triangle $\gamma_2$ meets. Let $x_{\gamma_1}$ and $x_{\gamma_2}$ be the corresponding cluster variables and $\calg_1$ and $\calg_2$ be their associated snake graphs, respectively. We know from Theorem \ref{smoothing2} that the snake graphs of the arcs obtained by smoothing the crossing of $\gamma_1$ and $\gamma_2$ are given by the resolution $\gt s{\e_3}12$ of the grafting of $\calg_2$ on $\calg_1$ in $G_s,$ where $s$ is such that $\Delta=\Delta_s$ and, if $s=0,$ then $\e_3$ is the unique side of $\Delta$ which is not crossed neither by $\gamma_1$ nor $\gamma_2.$

The edge of $G_s$ which is the glueing edge for the grafting is called the {\em grafting edge}. We say that the grafting edge is {\em minimal} if it belongs to the minimal matching on $\calg_1.$

Recall that $\gt s {\e_3}12$ is a pair $(\calg_3 \sqcup \calg_4), (\calg_5 \sqcup \calg_6).$ Let
\begin{align*}
 \call (\gt s{\e_3}12) = y_{34} \call (\calg_3 \sqcup \calg_4) + y_{56} \call (\calg_5 \sqcup \calg_6)
\end{align*}
where

\begin{align} \label{skeincoeff}
\left\{\begin{array}{llll}
 y_{34}=1& \textup{and} &y_{56} = \pprod {j=k_5+1}{s-1}{y_{i_j}} & \mbox{ if the grafting edge is minimal; } \\
  y_{34} = \pprod {j=s+1}{k_4-1}{y_{i_j}} &\textup{and} &y_{56}=1 & \mbox{ otherwise. } 
\end{array}\right.
\end{align}

\begin{thm} \label{laurent2}
 With the notation above, we have
\begin{align*}
 \la 12 = \call (\gt s{\e_3}12)
\end{align*}
\end{thm}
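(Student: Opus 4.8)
The plan is to follow the template of the proof of Theorem~\ref{laurent}: expand $\la 12$ as a sum over perfect matchings, use the bijection $\varphi=(\varphi_{34},\varphi_{56})$ of Theorem~\ref{bijections}(2) to split the sum, and reduce everything to a weight-tracking lemma in the spirit of Lemma~\ref{lem 72}. The first step is purely combinatorial bookkeeping of tiles. Since $\calg_3=\calg_1[1,s]\cup\calg_2$ and $\calg_4=\calg_1[k_4,d]$ (or a single edge), the disjoint union $\calg_3\sqcup\calg_4$ contains every tile of $\calg_1\sqcup\calg_2$ except the tiles $G_{s+1},\dots,G_{k_4-1}$; likewise $\calg_5\sqcup\calg_6$ omits exactly $G_{k_5+1},\dots,G_s$. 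This yields the $x$-weight identities
$$x(\calg_3)x(\calg_4)=\frac{x(\calg_1)x(\calg_2)}{\prod_{j=s+1}^{k_4-1}x_{i_j}},\qquad x(\calg_5)x(\calg_6)=\frac{x(\calg_1)x(\calg_2)}{\prod_{j=k_5+1}^{s}x_{i_j}},$$
which are the grafting analogue of equation~(\ref{eq1}). The boundary cases $s=0,d$ are handled by deleting the extremal tile, following the convention in the definition of $\varphi$.

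Next I would substitute the definition of $\call$ into both sides and split $\match(\calg_1\sqcup\calg_2)$ according to whether $\varphi(P)$ lies in $\ma 34$ (the first three cases of Figure~\ref{sigma}) or in $\ma 56$ (the last three). After cancelling the $x$-denominators using the identities above, the theorem reduces to the termwise statements, for $P=(P_1,P_2)$,
$$x(P)y(P)=y_{34}\Bigl(\prod_{j=s+1}^{k_4-1}x_{i_j}\Bigr)\,x(\varphi_{34}(P))\,y(\varphi_{34}(P))\quad\text{if }\varphi(P)\in\ma 34,$$
and the symmetric identity with $\calg_5,\calg_6$, $y_{56}$ and $\prod_{j=k_5+1}^{s}x_{i_j}$ otherwise. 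The $x$-part of each identity is immediate: the operation $\sigma$ of Figure~\ref{sigma} preserves edge weights up to the single glueing edge, and the only edges of $\calg_1\sqcup\calg_2$ not retained by $\varphi(P)$ sit on the omitted tiles, producing exactly the displayed $\prod x_{i_j}$ factor.

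The crux is the $y$-part, and this is where the dichotomy of (\ref{skeincoeff}) --- whether or not the grafting edge is minimal --- must be extracted. The key geometric input, established in the proofs of Theorem~\ref{lem 53} and Theorem~\ref{smoothing2}, is that the omitted tiles $G_{s+1},\dots,G_{k_4-1}$ (resp.\ $G_{k_5+1},\dots,G_s$) correspond to a maximal $(T,\gamma_1)$-fan, hence to a zigzag subgraph of $\calg_1$ on which any sign function is constant. Consequently, in passing from the minimal matching of $\calg_1\sqcup\calg_2$ to that of $\calg_3\sqcup\calg_4$ these tiles are uniformly either enclosed or not enclosed by the symmetric-difference cycles, and the height monomial $y(P)=\prod_{j\in J}y_{\tau_{i_j}}$ changes by exactly $\prod_{j=s+1}^{k_4-1}y_{i_j}$ when the grafting edge is not minimal and by $1$ when it is. I would verify this by inspecting, for each of the six configurations of Figure~\ref{sigma}, how the set $J$ of enclosed tiles is altered, together with an induction on the number of fan tiles, exactly as in the proof of Lemma~\ref{lem 72}; the two possibilities for the enclosed fan tiles are precisely the minimal and non-minimal cases, and they deposit the factor into $y_{56}$ or $y_{34}$ respectively.

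The main obstacle is this last bookkeeping of the height monomials rather than the $x$-weight cancellation, which is routine. Because $y(P)$ is measured relative to the minimal matching, and the minimal matchings of $\calg_1$, $\calg_3$ and $\calg_5$ differ precisely on the fan tiles and on the grafting tile $G_s$, the delicate point is to confirm that the change of reference matching contributes exactly the product over the correct index range in (\ref{skeincoeff}); note in particular that the $y$-range for $y_{56}$ stops at $s-1$ while the corresponding $x$-range runs to $s$, the discrepancy being absorbed by the grafting tile. Getting these ranges and the minimal/non-minimal split to align with (\ref{skeincoeff}) is the step that requires genuine care.
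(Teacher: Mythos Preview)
Your proposal is correct and follows essentially the same approach as the paper: the $x$-identities you write are the paper's equations (\ref{graft1}) and (\ref{graft2}), and the termwise statements you reduce to are exactly Lemmas~\ref{lem 75} and~\ref{lem 76} (the latter handling the boundary case $s=d$). The only cosmetic difference is in the $y$-monomial step: rather than tracking the change of reference minimal matching and inducting on the number of fan tiles, the paper argues more directly that on the zigzag subgraph $\calg_1[s+1,k_4-1]$ (resp.\ $\calg_1[k_5+1,s]$) the matching $P$ is uniquely forced by the $\sigma$-configuration and equals the minimal or the maximal matching there according to whether the grafting edge is minimal---this immediately yields the $y$-factor without induction.
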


\begin{proof} The cases $s=d$ and $s=0$ are symmetric and therefore we consider only the case $0 < s \leq d.$ By definition,

\begin{align*}
 \la 12 = \frac{1}{x(\calg_1 \sqcup \calg_2)} \displaystyle \sum_{P \in \ma 12} x(P) y(P)
\end{align*}
 and from the construction of $\gt s{\e_3}12$ in Definition \ref{grafting}, we see
\begin{align} \label{graft1}
 x(\calg_1 \sqcup \calg_2) =
\begin{cases}
  x(\calg_3 \sqcup \calg_4) \pprod {j=s+1}{k_4-1}{x_{i_j}} & \mbox { if } 0 < s< d;\\
  x(\calg_3 \sqcup \calg_4)  & \mbox { if }  s=d;
\end{cases}
\end{align}
{ where $k_4 > s+1$ is the least integer such that $f_1(e_s)=-f_1(e_{k_4-1})$ if such a $k_4$ exists; and $k_4=d+1$ otherwise,}
 and
\begin{align} \label{graft2}
 x(\calg_1 \sqcup \calg_2) =
\begin{cases}
  x(\calg_5 \sqcup \calg_6) \pprod {j=k_5+1}{s}{x_{i_j}} & \mbox { if } 0 < s< d;\\
  x(\calg_5 \sqcup \calg_6) \left(\pprod {j=k_5+1}{s}{x_{i_j}}\right)\left(  \pprod {j=1}{k_6-1}{x_{i'_j}}\right) & \mbox { if }  s=d;
\end{cases}
\end{align}{ where $k_5 < s$ is the largest integer such that $f_1(e_{k_5})=-f_1(e_{s})$ if such a $k_5$ exists; and $k_4=0$ otherwise.}
 
{\it Case 1.} Suppose first that $0 < s < d.$ Using the bijection $\varphi = (\varphi_{34} , \varphi_{56}): \ma 12 \to \match (\gt s{e_3}12) $ of Theorem \ref{bijections}, we obtain  
\begin{align*}
 &\la 12 & = \displaystyle \frac{1}{x(\calg_3 \sqcup \calg_4) \left(   \pprod {j=s+1}{k_4-1}{x_{i_j}} \right) } \displaystyle \sum_{\varphi_{34}(P) \in \ma 34 } x(P) y(P)\\
 & &+ \displaystyle \frac{1}{x(\calg_5 \sqcup \calg_6) \left(   \pprod {j=k_5+1}{s}{x_{i_j}} \right) } \displaystyle \sum_{\varphi_{56}(P) \in \ma 56 } x(P) y(P)
\end{align*}
 
Therefore, in case 1,  the statement follows from the following lemma.
\end{proof}
\begin{lem}\label{lem 75}
\begin{itemize}
 \item[(a)] If $\varphi(P) \in \ma 34$ then 
\begin{align*}
 x(P)y(P) =x(\varphi_{34}(P)) y(\varphi_{34}(P)) \left ( \pprod {j=s+1}{k_4-1}{x_{i_j}} \right ) y_{34} 
\end{align*}
\item [(b)] If $\varphi (P) \in \ma 56$ then
\begin{align*}
   x(P)y(P)=x(\varphi_{56}(P)) y(\varphi_{56}(P)) \left ( \pprod {j=k_5+1}{s}{x_{i_j}} \right )y_{56} 
\end{align*}
\end{itemize}
\end{lem}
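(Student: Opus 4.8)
The plan is to prove (a) and (b) by the same two-step strategy used for Lemma~\ref{lem 72}, separating the edge-weight contribution (the $x$-monomial) from the enclosed-tile contribution (the $y$-monomial). Since the $x_\tau$ and the $y_\tau$ are algebraically independent, the single identity in part (a) splits into the pair
$$x(P)=x(\varphi_{34}(P))\prod_{j=s+1}^{k_4-1}x_{i_j},\qquad y(P)=y(\varphi_{34}(P))\,y_{34},$$
and part (b) into the analogous pair with $\prod_{j=k_5+1}^{s}x_{i_j}$ and $y_{56}$; I would treat each factor separately. Case (b) is the mirror situation: the last three cases of Figure~\ref{sigma} are the top--bottom reflections of the first three, and $(\calg_5,\calg_6)$ plays the role of $(\calg_3,\calg_4)$ after reversing the orientation of $\gamma_1$. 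I would therefore carry out (a) in full and deduce (b) by this symmetry.

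For the $x$-monomial identity I would use the explicit description of $\varphi_{34}$ through the local operation $\sigma$ of Figure~\ref{sigma}. Away from the grafting tile $G_s$ the map is the identity on labelled edges: the edges of $P_1$ on $\calg_1[1,s-1]$ and on $\calg_1[k_4,d]$, and the edges of $P_2$ on $\calg_2[2,d']$, are carried to themselves. Hence $x(P)/x(\varphi_{34}(P))$ is local to the strip $\calg_1[s,k_4-1]$ together with the first tile of $\calg_2$, where it equals the weight of the removed edges of $P_1$ divided by the weight of $\sigma_{s,3}$. The key geometric input is that the label $\tau_{i_j}$, the diagonal of $G_j$, also occurs as a genuine side edge of the neighbouring tiles $G_{j-1}$ and $G_{j+1}$; consequently deleting $G_{s+1},\dots,G_{k_4-1}$ removes exactly one factor $x_{i_j}$ per tile. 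I would make this precise by induction on the length $k_4-s$ of the dropped strip, exactly as in the induction on $\ell=t-s+1$ in Lemma~\ref{lem 72}(b): the base case $k_4=s+1$ is the single-tile computation read off from Figure~\ref{sigma}, and the inductive step peels off one tile of the straight/zigzag run.

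For the $y$-monomial identity I would compare the symmetric differences $P_-\ominus P$ on $\calg_1\sqcup\calg_2$ and $P_-\ominus\varphi_{34}(P)$ on $\calg_3\sqcup\calg_4$, using Lemma~\ref{thm y} and Definition~\ref{height}. Outside the strip $\calg_1[s+1,k_4-1]$ the two symmetric differences enclose the same tiles, so those $y$-factors cancel. Inside the strip the outcome is governed precisely by whether the grafting edge is minimal: when it is not minimal, the cycle structure of $P$ encloses the whole run $G_{s+1},\dots,G_{k_4-1}$ while $\varphi_{34}(P)$ encloses none of it, producing $y_{34}=\prod_{j=s+1}^{k_4-1}y_{i_j}$; when it is minimal, these tiles are enclosed on neither side, so $y_{34}=1$. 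This is exactly the dichotomy recorded in~\eqref{skeincoeff}, and verifying it amounts to inspecting how the minimal matching interacts with the local configurations of Figure~\ref{sigma} along the run.

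The main obstacle I anticipate is the $x$-monomial bookkeeping along the dropped strip: one must show that the labels contributed by $P_1$ on $\calg_1[s+1,k_4-1]$ telescope to precisely the diagonal product $x_{i_{s+1}}\cdots x_{i_{k_4-1}}$, with no leftover boundary-edge discrepancy. This is the same delicate, sign-function-driven cancellation that makes Lemma~\ref{lem 72}(b) the hard part there, and I expect the induction to require splitting according to whether the run is straight or zigzag and according to which case of Figure~\ref{sigma} occurs at the grafting tile.
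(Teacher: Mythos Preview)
Your overall strategy—separating $x$- and $y$-monomials and localising to the strip between $G_s$ and $G_{k_4}$—matches the paper, but you are missing the observation that makes the computation short. By definition of $k_4$ (the least integer $>s+1$ with $f_1(e_{k_4-1})=-f_1(e_s)$), the interior edges $e_s,e_{s+1},\ldots,e_{k_4-2}$ all have the same sign, so $\calg_1[s,k_4-1]$ is a \emph{zigzag} subgraph. There is no straight/zigzag case split to worry about. On a zigzag piece, once you know how one extremal vertex is matched the entire boundary matching is forced; the first three cases of Figure~\ref{sigma} tell you that the north-east vertex of $G_s$ is matched inside $G_s$, and this propagates uniquely through the zigzag to determine $P_1$ on all of $\calg_1(s+1,k_4-1)$. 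Reading off the labels of these forced edges gives exactly $\prod_{j=s+2}^{k_4-1}x_{i_j}$, and together with the grafting edge of weight $x_{i_{s+1}}$ lost under $\sigma_{s,3}$ you get the desired $x$-ratio directly—no induction on $k_4-s$, and none of the ``delicate telescoping'' you anticipated.

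The same forced-matching observation handles the $y$-monomial cleanly: on the zigzag strip $\calg_1(s+1,k_4-1)$ the matching $P_1$ is one of the two boundary matchings, namely the minimal one precisely when the grafting edge is minimal, and the maximal one otherwise. This immediately gives $y(P)/y(\varphi_{34}(P))$ equal to $1$ or $\prod_{j=s+1}^{k_4-1}y_{i_j}$, which is the definition of $y_{34}$. Your symmetric-difference comparison is correct in principle, but once you know the matching is forced to be minimal or maximal on the strip, there is nothing left to compute.

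For part (b) the paper does not invoke symmetry but repeats the argument with $\sigma_{s,5}$ and the zigzag strip $\calg_1[k_5+1,s+1]$; the logic is identical, and the min/max roles are swapped, which is exactly what produces the complementary behaviour of $y_{56}$ versus $y_{34}$ in \eqref{skeincoeff}. Your reflection heuristic is morally right but you should check it actually interchanges the two halves of Figure~\ref{sigma} and the definitions of $k_4,k_5$; writing out (b) directly is no longer than the symmetry argument once you have the zigzag observation.
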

\begin{proof}
(a) In the operation $\sigma_{s,3}$ in the definition of $\varphi,$ we lose an edge of the matching (the grafting edge) with weight $x_{i_{s+1}}$, see Figure~\ref{sigma}. The map $\varphi_{34}$ is given by the first three cases of $\sigma$ in Figure~\ref{sigma}. In each of these cases, the north east vertex of the tile $G_s$ in $\calg_1$ is matched in $P$ by an edge of $G_s.$ The subgraph $\calg_1[s, k_4-1]$ is a zigzag subgraph of $\calg_1,$ thus, on $\calg_1[s+1,k_4-1]$, the matching $P$ is uniquely determined by the fact that the north east vertex of $G_s$ is matched in $G_s.$ Indeed, the matching is as in the left picture of Figure \ref{figlem75}.
 \begin{figure}
\begin{center}
\scalebox{0.8}{ 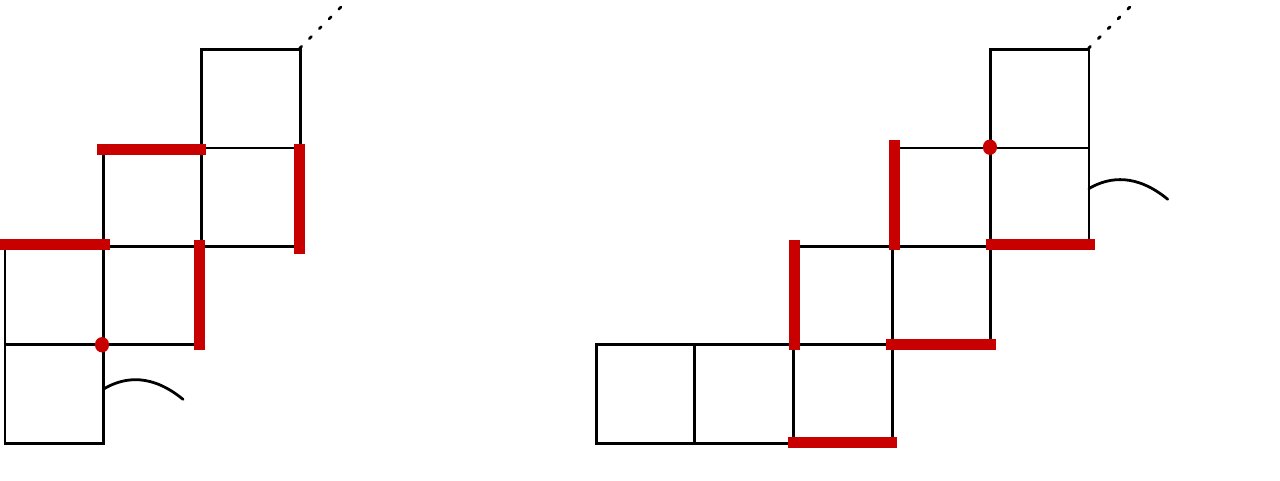}
 \caption{Proof of Lemma \ref{lem 75}: part (a) on the left; part (b) on the right.}
 \label{figlem75}
 \end{center}
\end{figure}
Furthermore, the weight of $P$ on $\calg_1(s+1,k_4-1)$ is equal to $\pprod {j=s+2}{k_4-1}{x_{i_j}}.$ Therefore
\begin{align*}
 x(P) \big / x(\varphi_{34}(P)) = \left (  \pprod {j=s+2}{k_4-1}{x_{i_j}} \right ) x_{i_{s+1}} 
\end{align*}
where the $x_{i_{s+1}}$ comes from the operation $\sigma_{s,3}.$
Moreover, on $\calg_1(s+1,k_4-1),$ the matching $P$ is the minimal matching, if the grafting edge is minimal, and $P$ is the maximal matching, otherwise. Thus
\begin{align*}
 y(P) \big / y(\varphi_{34}(P)) = \left\{
\begin{array}{cl}
  1 & \mbox{ if the grafting edge is minimal, }\\ \\
\pprod {j=s+1}{k_4-1}{y_{i_j}} & \mbox{ otherwise.}
\end{array}\right.
\end{align*}
which is exactly the definition of $y_{34}.$ This completes the proof of part (a).

(b) In the operation $\sigma_{s,5}$ in the definition of $\varphi,$ which is given by the last three cases in Figure~\ref{sigma}, we lose an edge (the grafting edge) with weight $x_{i_s}.$ In each of the three cases in the definition of $\sigma_{s,5},$ the northwest vertex of $G_s$ in $\calg_1$ is matched in $P$ by an edge of $G_{s+1}.$ Again, since the subgraph $\calg_1[k_5+1,s+1]$ is a zigzag subgraph, then on $\calg(k_5+1,s)$ the matching $P$ is uniquely determined. Indeed, the matching is as in the right picture of Figure \ref{figlem75}.

Furthermore, the weight of $P$ on $\calg(k_5+1,s)$ is equal to $\left ( \pprod {j=k_5+1}{s-1}{x_{i_j}} \right ) x_{i_s} .$ Moreover, on $\calg(k_5+1,s),$ the matching $P$ is the maximal matching if the grafting edge is minimal, and the minimal matching otherwise. Thus
\begin{align*}
y(P) \big / y(\varphi_{56}(P)) = 
\begin{cases}
 \pprod {j=k_5+1}{s-1}{y_{i_j}} & \mbox{ if the grafting edge is minimal}\\
 \quad1 & \mbox{ otherwise}
\end{cases}
\end{align*}
which is exactly the definition of $y_{56}.$ Moreover, comparing this situation to the two cases for $y(P) \big / y(\varphi_{34} (P))$ above, we see that 
\[\begin{array}{lcl}
 y_{34}=1 &  \Longleftrightarrow & y_{56} = \pprod {j=k_5+1}{s-1}{y_{i_j}} \\
 y_{34} = \pprod {j=s+1}{k_4-1}{y_{i_j}} &  \Longleftrightarrow & y_{56}=1
\end{array}\]
which completes the proof of part (b).

{\it Case 2.} Now suppose that $s=d.$ Using the bijection $\varphi=(\varphi_{34}, \varphi_{56}): \ma 12 \to \match (\gt s{e_3}12)$ of Theorem \ref{bijections} and equations (\ref{graft1}) and (\ref{graft2}), we obtain 
\begin{align*}
 \la 12 &= \frac{1}{x(\calg_3 \sqcup \calg_4)} \displaystyle \sum_{\varphi_{34}(P) \in \ma 34} x(P) y(P)\\
&+ \frac{1}{x(\calg_5 \sqcup \calg_6) \left(  \pprod {j=k_5+1}{s}{x_{i_j}}  \right) \left(  \pprod {j=1}{k_6-1}{x_{i'_j}}  \right)} \displaystyle \sum_{\varphi_{56}(P) \in \ma 56} x(P) y(P)
\end{align*}
Note that $\calg_4 = \{\delta_3\} $ and $x(\calg_4)=1$ in this case. Therefore the statement follows from the following lemma. 
\end{proof}
 
\begin{lem}\label{lem 76}
\begin{itemize}
\item[(a)] If $\varphi(P) \in \ma 34$ then
\begin{align*} x(P)y(P) =  
x(\varphi_{34}(P))\, y(\varphi_{34}(P))\,y_{34}
\end{align*}
 \item[(b)] If $\varphi(P) \in \ma 56$ then
\begin{align*}x(P)y(P) =
x(\varphi_{56}(P))\, y(\varphi_{56}(P))  \left ( \pprod {j=k_5+1}{s}{x_{i_j}} \right ) \left ( \pprod {j=1}{k_6-1}{x_{i'_j}} \right )  y_{56}.  
\end{align*}
\end{itemize}
\end{lem}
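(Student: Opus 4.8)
The plan is to mirror the proof of Lemma \ref{lem 75}, the only structural changes for $s=d$ being that $\calg_4=\{\delta_3\}$ is a single edge and that $\calg_6=\ocalg_2[d',k_6]$ is now carved out of $\calg_2$ rather than $\calg_1$. Consequently the factor $\prod_{j=s+1}^{k_4-1}x_{i_j}$ that appeared in Lemma \ref{lem 75}(a) becomes empty, while in part (b) a second product $\prod_{j=1}^{k_6-1}x_{i'_j}$ appears, recording a zigzag inside $\calg_2$.

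For part (a), I would read off $\varphi_{34}$ from step (i) of the map for statement (2) in the case $s=d$, that is, the operation $\sigma_{s,3}$ of Figure \ref{sigma} with the absent tile $G_{s+1}$ deleted; here $\calg_3=\calg_1\cup\calg_2$ and $\calg_4=\{\delta_3\}$, and (\ref{graft1}) gives $x(\calg_1\sqcup\calg_2)=x(\calg_3\sqcup\calg_4)$. The operation only reroutes the matching around $G_s=G_d$, and the grafting edge $\delta_3$ that leaves $P_1$ reappears as the unique matching edge of $\calg_4=\{\delta_3\}$, so $x(P)=x(\varphi_{34}(P))$. Inspecting the three configurations of $\sigma$ shows that no enclosed tile is created or destroyed, whence $y(P)=y(\varphi_{34}(P))$; and since $\calg_1$ has no tile beyond $G_d$, the product $\prod_{j=s+1}^{k_4-1}y_{i_j}$ is empty, so $y_{34}=1$ in either case of (\ref{skeincoeff}). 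This is exactly the identity of part (a).

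For part (b) I would use $\varphi_{56}=\sigma_{s,5}$ (the last three cases of Figure \ref{sigma}, $G_{s+1}$ deleted), which loses the grafting edge. As in Lemma \ref{lem 75}(b), the subgraph $\calg_1[k_5+1,s]$ is a zigzag, so matching the northwest vertex of $G_d$ inside $G_d$ forces $P_1$ there and gives weight $\left(\prod_{j=k_5+1}^{s-1}x_{i_j}\right)x_{i_s}=\prod_{j=k_5+1}^{s}x_{i_j}$. The new ingredient is the symmetric analysis on $\calg_2$: by the choice of $k_6$ in Definition \ref{grafting}, the subgraph $\calg_2[1,k_6-1]$ is a zigzag (fans correspond to zigzag subgraphs), whence $P_2$ is forced there with weight $\prod_{j=1}^{k_6-1}x_{i'_j}$; since reflection in forming $\ocalg_2$ preserves edge labels, these are precisely the two product factors in the statement. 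Because the glueing edges removed in passing to $\varphi_{56}(P)$ are interior, the $y$-monomials are unchanged on them, and tracking the enclosed tiles yields $y(P)/y(\varphi_{56}(P))=y_{56}$ according to whether the grafting edge is minimal, exactly as in Lemma \ref{lem 75}(b).

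I expect the main obstacle to be the bookkeeping on $\calg_6=\ocalg_2[d',k_6]$, which has no counterpart in Lemma \ref{lem 75}: I must check that $\calg_2[1,k_6-1]$ is genuinely a zigzag on which $P_2$ is forced, that the reflection does not disturb weights, and that the second glueing edge is interior so that it contributes nothing to the $y$-monomial. Everything else should transfer verbatim from the proof of Lemma \ref{lem 75}.
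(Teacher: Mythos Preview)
Your proposal is essentially correct and follows the same approach as the paper's proof: for (a) the lost edge $\delta_3$ reappears as the sole matching of $\calg_4=\{\delta_3\}$, giving $x(P)=x(\varphi_{34}(P))$, and inspection of the three $\sigma$-cases gives $y(P)=y(\varphi_{34}(P))$ with $y_{34}=1$; for (b) the forced matchings on the zigzags $\calg_1[k_5+1,s]$ and $\calg_2[1,k_6-1]$ produce the two products of $x$-variables.

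One point of wording to correct: in part (b) the paper does \emph{not} describe $\varphi_{56}$ via $\sigma_{s,5}$ with $G_{s+1}$ deleted. Rather, it says explicitly that since $s=d$ the operation $\sigma_{s,5}$ does not apply at all, and $\varphi_{56}(P)$ is simply the restriction of $P$ to $\calg_5\sqcup\calg_6$. Your phrase ``matching the northwest vertex of $G_d$ inside $G_d$'' is borrowed from Lemma \ref{lem 75}(b), where that vertex was matched by an edge of $G_{s+1}$; here $G_{s+1}$ does not exist, so the correct phrasing is that the fifth case of Figure \ref{sigma} (with $G_{s+1}$ absent) fixes the initial condition on $G_d$, which then forces $P_1$ on the zigzag $\calg_1[k_5+1,d]$. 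Similarly the initial condition on $G'_1$ forces $P_2$ on the zigzag $\calg_2[1,k_6-1]$. The computation and conclusion are identical to yours, but the mechanism is restriction-plus-forcing rather than an honest $\sigma_{s,5}$ operation.
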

\begin{proof}
(a) In the operation $\sigma_{s,3}$ in the definition of $\varphi,$ we loose an edge of the matching with weight $x_{\e_3}$ where $\e_3$ is the grafting edge. Since $\calg_4 = \{ \e_3\}$ in this case, we get $x(\varphi_{34}(P))=x(P).$
 
 Moreover, a direct inspection of the first three cases in Figure \ref{sigma} (with the tile $G_{s+1}$ removed) shows that 
\begin{align*}
 y(P)=y(\varphi_{34}(P)),
\end{align*}
and on the other hand $y_{34}=1$ since $s=d.$
\item[(b)] Since $s=d,$ the operation $\sigma_{s,5}$ does not apply and $\varphi_{56}(P)$ is simply given by restricting $P$ to $\calg_5 \sqcup \calg_6.$ Note that $\calg_5=\calg_1[1,k_5]$ and $\calg_1[k_5+1,s]$ is the zigzag graph shown on the left hand side of Figure \ref{figlem56}.
 \begin{figure}
\begin{center}
\scalebox{0.8}{ 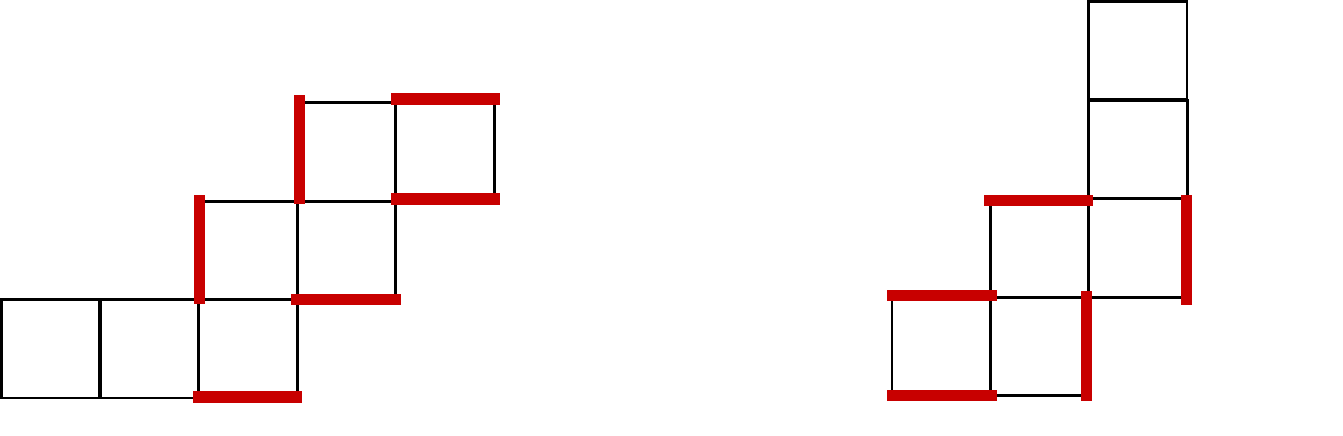}
 \caption{Proof of Lemma \ref{lem 76}}
 \label{figlem56}
 \end{center}
\end{figure}
The matching $P$ is determined on $\calg_1[k_5+1,s]$ by the initial condition given in the fifth case of Figure \ref{sigma}. Similarly, $\calg_6=\calg_2[k_6,d']$ and $\calg_2[1, k_6-1]$ is shown on the right hand side of Figure \ref{figlem56}. Here the matching $P$ is determined by the initial condition as shown in Figure \ref{figlem56}. It follows that 
\begin{align*}
x(P) = x(\varphi_{56}(P))\left ( \pprod {j=k_5+1}{s}{x_{i_j}} \right) \left ( \pprod {j=1}{k_6-1}{x_{i'_j}} \right ) .
\end{align*}
 
\end{proof}

\subsection{Skein relations}

As a corollary we obtain a new proof of the skein relations.

\begin{cor} \label{skein}
 Let $\gamma_1$ and $\gamma_2$ be two arcs which cross and let $(\gamma_3, \gamma_4)$ and $(\gamma_5,\gamma_6)$ be the two pairs of arcs  obtained by smoothing the crossing. 
 \begin{itemize}
\item[(a)] If $\gamma_1$ and $\gamma_2$ have a non-empty local overlap, then
 $$x_{\gamma_1} x_{\gamma_2} =x_{\gamma_3}x_{\gamma_4} + y(\tcalg) x_{\gamma_5} x_{\gamma_6}$$
 where $\tcalg$ is the closure of the overlap $\calg.$
 \item[(b)] If $\gamma_1$ and $\gamma_2$ have an empty local overlap, then
 $$x_{\gamma_1} x_{\gamma_2} =y_{34}x_{\gamma_3}x_{\gamma_4} + y_{56} x_{\gamma_5} x_{\gamma_6}$$
 where $y_{34}$ and $y_{56}$ are as in equation \ref{skeincoeff}.
\end{itemize}
\end{cor}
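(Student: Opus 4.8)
The plan is to derive the corollary formally from the snake-graph identities of Theorems~\ref{laurent} and~\ref{laurent2}, combined with the geometric identifications of Theorems~\ref{smoothing1} and~\ref{smoothing2}. The only genuinely new ingredient is a \emph{factorization lemma}: for any two generalized arcs $\mu,\nu$ one has $\call(\calg_\mu \sqcup \calg_\nu) = x_\mu\, x_\nu$. To prove it, I would use that a perfect matching of a disjoint union is exactly a pair of perfect matchings of the two factors, so that $\match(\calg_\mu \sqcup \calg_\nu) \cong \match(\calg_\mu) \times \match(\calg_\nu)$, and that the edge weights are multiplicative, $x(P)=x(P_\mu)x(P_\nu)$ and $y(P)=y(P_\mu)y(P_\nu)$. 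Hence $\sum_{P} x(P)y(P)$ factors as the product of the two single-graph sums. Since $x(\calg_\mu) = \mathrm{cross}(T,\mu)$, dividing by $x(\calg_\mu)\,x(\calg_\nu)$ and comparing with Definition~\ref{def:matching} yields $\call(\calg_\mu \sqcup \calg_\nu) = x_\mu x_\nu$; the convention $x(\{\tau\})=1$ makes this hold also when a factor is a single edge, where $\call(\{\tau\}) = x_\tau$.

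With the lemma in hand, part~(a) is immediate. By Theorem~\ref{smoothing1} the snake graphs of the four arcs $\gamma_3,\gamma_4,\gamma_5,\gamma_6$ obtained from the smoothing are precisely the components $\calg_3,\calg_4,\calg_5,\calg_6$ of the resolution $\re 12$. Theorem~\ref{laurent} then gives $\la 12 = \la 34 + y(\tcalg)\,\la 56$, and applying the factorization lemma to each of the three terms rewrites this as $x_{\gamma_1} x_{\gamma_2} = x_{\gamma_3} x_{\gamma_4} + y(\tcalg)\, x_{\gamma_5} x_{\gamma_6}$, which is exactly~(a). Part~(b) runs identically, with Theorem~\ref{smoothing2} replacing Theorem~\ref{smoothing1} and Theorem~\ref{laurent2} replacing Theorem~\ref{laurent}: the four smoothed arcs now have snake graphs equal to the components of the grafting resolution $\gt s{\e_3}12$, and the identity $\la 12 = y_{34}\,\la 34 + y_{56}\,\la 56$ factorizes termwise into $x_{\gamma_1} x_{\gamma_2} = y_{34}\, x_{\gamma_3} x_{\gamma_4} + y_{56}\, x_{\gamma_5} x_{\gamma_6}$.

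Since the entire computational weight has already been carried by Theorems~\ref{laurent} and~\ref{laurent2}, the remaining work is organizational, and the main obstacle is bookkeeping at the degenerate boundary cases rather than any new estimate. Concretely, I would verify that the factorization lemma and the geometric identification remain valid when one of $\calg_4,\calg_5,\calg_6$ collapses to a single edge $\{\tau\}$ — that is, when a smoothed curve lies in $T$ — using the conventions $x(\{\tau\})=1$, $x(P)=x_\tau$, $y(P)=1$. A second delicate point is that a smoothed curve may acquire a contractible kink or cut out a contractible monogon; there one must invoke the sign and vanishing conventions of Definition~\ref{def:matching} (namely $x_\gamma = -x_{\overline\gamma}$ and $\gamma = 0$) and check that these are consistent with the sign-free snake-graph formulas, so that no $\pm$ ambiguity survives. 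Confirming this compatibility is the only genuinely subtle step; once it is dispatched, the corollary follows formally from the four cited theorems together with the factorization lemma.
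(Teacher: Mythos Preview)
Your proposal is correct and follows essentially the same route as the paper. Your ``factorization lemma'' is exactly the paper's equation~(\ref{eq skein 66}), proved by the same multiplicativity argument, and the derivation of both parts is the same chain of equalities via Theorems~\ref{laurent} and~\ref{laurent2}; the paper leaves the invocation of Theorems~\ref{smoothing1} and~\ref{smoothing2} implicit and does not pause over the degenerate single-edge or kink cases you flag, but otherwise your argument matches it step for step.
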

\begin{proof} 
First note that for any $(\ell,k)=(1,2),(3,4),(5,6)$
we have 
\begin{eqnarray}\label{eq skein 66}
\lefteqn{\quad\quad\call (\calg_\ell) \call (\calg_k)} \\&=& \displaystyle \frac{1}{x(\calg_\ell)y(\calg_\ell)x(\calg_k)y(\calg_k)} \displaystyle \sum_{P_\ell \in \Match (\calg_\ell)} x(P_\ell)y(P_\ell) \displaystyle \sum_{P_k \in \match (\calg_k)} x(P_k)y(P_k)\nonumber\\
 &=& \displaystyle \frac{1}{x(\calg_\ell)y(\calg_\ell)x(\calg_k)y(\calg_k)} \displaystyle \sum_{(P_\ell,P_k) \in \match \calg_\ell \times \calg_k} x(P_\ell)x(P_k)y(P_\ell)y(P_k)\nonumber\\
  &=& \displaystyle \frac{1}{x(\calg_\ell)y(\calg_\ell)x(\calg_k)y(\calg_k)} \displaystyle \sum_{P \in \ma \ell k} x(P) y(P) \nonumber\\
  &=& \la \ell k.\nonumber
\end{eqnarray}

Therefore in case (a) we have
\begin{eqnarray*}
x_{\gamma_1} x_{\gamma_2} &=& \call (\calg_1 \sqcup \calg_2)\\
&=& \call (\res_{\calg} (\calg_1, \calg_2))\\
&=& \call (\calg_3 \sqcup \calg_4) + y(\tcalg) \call (\calg_5 \sqcup \calg_6)\\
&=& x_{\gamma_3}x_{\gamma_4} + y(\tcalg) x_{\gamma_5} x_{\gamma_6}
\end{eqnarray*}
where the first equality and the last equality hold by equation (\ref{eq skein 66}), the second equality by Theorem  \ref{laurent}, and the third equality by definition.

In case (b) we have
\begin{eqnarray*}
x_{\gamma_1} x_{\gamma_2} &=& \call (\calg_1 \sqcup \calg_2)\\
&=& \call (\graft_{\calg} (\calg_1, \calg_2))\\
&=& y_{34}\call (\calg_3 \sqcup \calg_4) + y_{56} \call (\calg_5 \sqcup \calg_6)\\
&=& y_{34}x_{\gamma_3}x_{\gamma_4} + y_{56} x_{\gamma_5} x_{\gamma_6}
\end{eqnarray*}
where the first and the last equality hold by (\ref{eq skein 66}), the second equality   by Theorem \ref{laurent2}, and the third equality   by definition.
\end{proof}


\section{Proof that $\varphi$ is a bijection}\label{sect 7}
In this section, we shall construct the inverse map $  \psi$ of the map $\varphi$ of Theorem~\ref{bijections} 
$$  \psi : \match ( \re 12 )\longrightarrow \match (\calg_1 \sqcup \calg_2).$$

We start with the following two lemmas. Recall that the operation $\rho$ is defined in Figure \ref{figrho}.

\begin{lem} \label{rho} If $\calg$ is a snake graph  and $ P, P'$ are two perfect matchings of $\calg$ such that  the operation $\rho$ does not apply at any of the tiles of $\calg,$ then
\[\begin{array}{rcl}
 P \cup  P'  & = & \{ \mbox{all boundary edges of } \calg \}  \  ;\\
     P \cap  P' &  = & \hspace{50pt}\emptyset  \hspace{60pt} .
\end{array}\]
\end{lem}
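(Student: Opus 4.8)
The plan is to reduce the statement to a purely local analysis of the two matchings on each pair of consecutive tiles, and then to glue the local information together with a global edge count.

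First I would record the combinatorics of a snake graph $\calg$ with $d$ tiles: it has $2d+2$ vertices and $d-1$ interior edges $e_1,\dots,e_{d-1}$, hence $3d+1-(d-1)=2d+2$ boundary edges; and since $\calg$ is contractible, its topological boundary is a single cycle whose edges are exactly the boundary edges. In particular every perfect matching of $\calg$ consists of $d+1$ edges. With these counts in hand the lemma reduces to the two claims (I) neither $P$ nor $P'$ contains an interior edge, and (II) $P\cap P'=\emptyset$. Indeed, granting (I) and (II), one has $|P\cup P'|=|P|+|P'|=2(d+1)=2d+2$, which equals the number of boundary edges; since (I) forces $P\cup P'$ to be contained in the set of boundary edges, equality of cardinalities gives $P\cup P'=\{\text{all boundary edges}\}$, and (II) is the second displayed equality.

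Next I would carry out the local analysis. For each $i$ with $1\le i\le d-1$ the restriction of the pair $(P,P')$ to the two--tile window $\calg[i,i+1]$ is one of the configurations classified in Figures~\ref{figrho} and~\ref{figrho2}, and the hypothesis that $\rho$ does not apply anywhere means we always land in one of the two configurations of Figure~\ref{figrho2}. Inspecting those two configurations, I would read off that (a) the interior edge $e_i$ lies in neither $P$ nor $P'$, and (b) inside the window $P$ and $P'$ share no edge. Now every interior edge $e_i$ occurs in exactly the window $\calg[i,i+1]$, so (a) over all $i$ yields claim (I); and (for $d\ge 2$) every edge of $\calg$ belongs to at least one window, so (b) over all $i$ yields claim (II). The base case $d=1$ is settled directly: a single tile has exactly two perfect matchings, which are complementary on its four boundary edges, so the only thing to observe is that the hypothesis excludes the coincidence $P=P'$, after which $\{P,P'\}$ is forced to be this complementary pair.

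The heart of the argument, and the step I expect to be the main obstacle, is the verification that the two configurations of Figure~\ref{figrho2} genuinely force (a) and (b), together with the bookkeeping at the extremal tiles $G_1$ and $G_d$, where one of the two neighbouring interior edges is absent and the matchings must instead close up along a boundary edge. Conceptually this is the assertion that $P\ominus P'$ is forced to be the entire outer boundary cycle: the non-applicability of $\rho$ rules out any place where this alternating cycle could turn inward, so there is a single cycle and it runs all the way around. Making that heuristic precise is exactly the case check against the figures; once (a) and (b) are confirmed locally and glued as above, the counting of the first paragraph completes the proof.
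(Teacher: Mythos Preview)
Your approach is correct and essentially the same as the paper's, which compresses the local analysis to two lines: if $P$ or $P'$ contains an interior edge then one of the first three cases of Figure~\ref{figrho} applies, and if they share a boundary edge then the fourth case applies; the edge-count you spell out is left implicit. One minor correction to your base case: for $d=1$ there are no two-tile windows, so the hypothesis on $\rho$ is vacuous and does \emph{not} exclude $P=P'$; the lemma as stated tacitly needs $d\ge 2$, which is the only setting in which the paper invokes it.
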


\begin{proof}
 If $ P$ or $ P'$ contains an interior edge then one of the first 3 cases in Figure~\ref {figrho} applies; if $ P$ and $ P'$ have a common boundary edge, then case 4 applies.
\end{proof}


\begin{lem}  \label {rho2}
 Let $\calg$ be a snake graph with sign function $f$ and let $ P$ be a perfect matching of $\calg$ which consists of boundary edges only. Let $\NE$ be the set of all north and east edges of the boundary and $\SW$ the set of all south and west edges of the boundary. Then 
\begin{align*}
& f(a)=f(b)  &   \mbox{ if $a,b$ are both in $ P \cap \NE$ or both in $ P \cap \SW$ }  \\
\\
  & f(c)=-f(b)  &   \mbox{ if one of $a,b$ is in $ P \cap \NE$ and the other in $ P \cap \SW $.}    
\end{align*}
\end{lem}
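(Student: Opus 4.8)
The plan is to reduce the lemma to a single local computation by exploiting that a matching $P$ using only boundary edges is forced to be one of the two alternating perfect matchings of the boundary cycle of $\calg$. First I would record the global shape of the boundary. Since the snake-graph axioms (ii)--(iii) forbid a $2\times 2$ block of tiles and force $G_i,G_j$ with $|i-j|\ge 3$ to be disjoint (and $|i-j|=2$ to meet in at most a reflex corner), $\calg$ has no interior vertices and no self-contact: its boundary is a simple closed polygon $C$ through \emph{every} vertex of $\calg$, with the interior edges $e_1,\dots,e_{d-1}$ appearing as chords of $C$. Because $P$ covers each vertex exactly once and uses only boundary edges, its restriction to $C$ is a perfect matching of the even cycle $C$; hence $P$ is precisely one of the two families of alternate edges of $C$.

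Next I would introduce the invariant that makes the statement transparent. For a boundary edge $a$ set $\eta(a)=f(a)$ if $a\in\NE$ and $\eta(a)=-f(a)$ if $a\in\SW$; the lemma is exactly the assertion that $\eta$ is constant on $P$. Writing $\epsilon_i=f(N_i)$, the sign-function rules give on each tile $G_i$ that $\eta\equiv\epsilon_i$ on the north and south edges and $\eta\equiv-\epsilon_i$ on the east and west edges, so $\eta$ alternates around each tile. Moreover, evaluating the shared edge $e_i$ from both tiles containing it yields $f(N_{i+1})=-f(N_i)$ regardless of whether $G_{i+1}$ lies north of or east of $G_i$; that is, $\epsilon_{i+1}=-\epsilon_i$.

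The heart of the argument is the local claim that at every vertex of $C$ the two incident boundary edges carry opposite values of $\eta$. A boundary vertex is of exactly one of three types: a convex corner, where the two edges are adjacent edges of a single tile and alternation of $\eta$ around that tile gives opposite values; a straight corner, where the two edges are parallel edges of consecutive tiles $G_i,G_{i+1}$, so the relation $\epsilon_{i+1}=-\epsilon_i$ forces opposite $\eta$; and a reflex corner, where the two perpendicular edges belong to $G_{i-1}$ and $G_{i+1}$ around a bent tile $G_i$, which is settled by the same two identities after identifying the concrete edges involved. Granting this, $\eta$ alternates all the way around $C$; since $P$ consists of alternate edges of $C$, it takes a single value on all of $P$. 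This is the lemma: two edges of $P$ of the same type ($\NE$ or $\SW$) then have equal $f$, while one of each type have opposite $f$.

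The step I expect to demand the most care is the reflex-corner case of the local claim, where the two matched boundary edges lie in different, non-adjacent tiles and one must propagate the sign across the two interior edges of the intervening bent tile, checking each orientation of the bend separately; the bookkeeping is routine but must be done explicitly. The other point deserving an honest (if short) justification is the purely topological assertion that the boundary is a simple cycle with no self-contact, which is what legitimizes treating $P$ as an alternating matching of a single even cycle.
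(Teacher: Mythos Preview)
Your argument is correct, but it is genuinely different from the paper's. The paper proves the lemma by a short induction on the number of tiles: one removes the last tile $G_d$, observes that the restriction of $P$ (possibly with one extra edge reinserted) is again a boundary-only matching of $\calg[1,d-1]$, applies the inductive hypothesis, and then checks the one or two new edges of $G_d$ directly against the sign rules. No global topology of the boundary is invoked.

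Your route is more structural: you show once and for all that the boundary is a simple even cycle through every vertex, so any boundary-only matching is one of the two alternating matchings of that cycle; then the invariant $\eta$ reduces the lemma to the single local claim that $\eta$ flips at every boundary vertex. What this buys is a clearer explanation of \emph{why} the lemma holds (it is the statement that $\eta$ alternates around the boundary), and the local claim is uniform once phrased via $\epsilon_{i+1}=-\epsilon_i$. What the paper's induction buys is economy: it avoids the topological preliminaries (no interior vertices, no pinch points, boundary a single simple cycle) and the three-way corner case analysis, at the cost of a slightly more ad hoc verification at the last tile. Your reflex-corner case does go through exactly as you anticipate: with $G_{i+1}$ north of $G_i$ (east of $G_{i-1}$), the two boundary edges at the reflex vertex are the north edge of $G_{i-1}$ and the west edge of $G_{i+1}$, with $\eta$-values $\epsilon_{i-1}$ and $-\epsilon_{i+1}=-\epsilon_{i-1}$ respectively; the other orientation is symmetric.
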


\begin{proof}
We proceed by induction on the number of tiles, $d$, in $\calg.$ If $d=1$ then the matching consists of either the east and west edge or the north and south edge, and in both cases the result follows directly from the definition of sign functions. Suppose $d>1,$ and let $\calg'=\calg[1,d-1].$ We may assume without loss of generality that the tile $G_d$ lies east of the tile $G_{d-1}.$ If $ P$ contains the east edge of $G_d$ then $ P[1,d-1]$ is a perfect matching of $\calg',$ the east edge of $G_d$ has the same sign as the north edge of $G_{d-1}$ and the result follows by induction. Otherwise, $ P$ contains the north and the south edge of $G_d$ and $ P'= P[1,d-1]\cup \{ \mbox{ east edge of $G_{d-1}$ } \}$ is a perfect matching of $\calg'$ consisting of boundary edges. By induction, the east edge of $G_{d-1}$ has the same sign as all north and all east edges in $ P'.$ The result follows now since the north edge of $G_d$ has the same sign as the east edge of $G_{d-1}$ and the opposite sign of the south of $G_d.$  
\end{proof}

Now we want to define 
$$  \psi : \match ( \re 12 )\longrightarrow \match (\calg_1 \sqcup \calg_2).$$

Recall that $\re 12$ is a pair $(\calg_3 \sqcup \calg_4, \calg_5 \sqcup \calg_6)$ where $\calg_3$ and $\calg_4$ overlap in $\calg.$ Let $u, v, u', v'$ be such that $\calg \cong \calg_3 [u,v] \cong \calg_4 [u',v']$ is the overlap. Let $ P_i \in \match \calg_i$ for $i=3,4,5,6.$
We treat the cases $u\ne v$ and $u=v$ separately.

\subsection{Definition of $\psi$ in the case $u \neq v$}\label{sect 71}

Suppose $u \neq 1$ and $u' \neq 1.$ Then we define $  \psi ( P_3, P_4)$ as follows.
\begin{enumerate}
\item \label{a} If the pair $( P_3, P_4)$ on $(\calg_3[u-1,u+1], \calg_4[u'-1,u'+1])$ is one of the eight configurations in Figure \ref{mu} then let $  \psi ( P_3, P_4)$ be
\begin{align*}
 ( P_3[1,u-1) \cup \mu_{u,1} \cup  P_4 (u'+1,d_4] , \hspace{80pt} \\ P_4[1,u'-1) \cup \mu_{u,2} \cup  P_3 (u+1,d_3] )
\end{align*}
\item \label{b} If (\ref{a}) does not apply, let $j$ be the least integer such that $1<j \leq v-u-2$ and the local configuration of $( P_3, P_4)$ on $(\calg_3[u+j,u+j+1], \calg_4[u'+j,u'+j+1])$ is one of the four shown in Figure \ref{figrho}, if such a $j$ exists, then let $  \psi ( P_3, P_4)$ be
\begin{align*}
( P_3[1,u+j-1) \cup \rho_{j,1} \cup  P_4 (u'+j+2,d_4] , \hspace{80pt} \\ P_4[1,u'+j-1) \cup \rho_{j,2} \cup  P_3 (u+j+2,d_3] )
\end{align*}
 \item \label{c} If (\ref{a}) and (\ref{b}) do not apply, Lemma \ref{psi} below implies that $\mu$ can be applied to the pair $( P_3, P_4)$ on $(\calg_3[v-1,v+1],\calg_4[v'-1,v'+1]),$ and we let $  \psi ( P_3, P_4)$ be
 \begin{align*}
 ( P_3[1,v-1) \cup \mu_{v,1} \cup  P_4 (v'+1,d_4] ,  P_4[1,v'-1) \cup \mu_{v,2} \cup  P_3 (v+1,d_3] )
\end{align*}
\end{enumerate}
If $u=1$ or $u'=1$ then we define $  \psi ( P_3, P_4)$ using only step (\ref{b}) with $1 \leq j \leq v-u-1$ and step (\ref{c}).

\begin{lem} \label {psi} 
If (\ref{a}) and (\ref{b}) do not apply and $u \neq 1, \ u' \neq 1$ then the local configuration of $(P_3,P_4)$ on $(\calg_3[v-1,v+1], \calg_4[v'-1, v'+1])$ is one of the eight cases on the left in Figure \ref{mu}, relabeling $s-1,s,s+1$ by $v+1,v,v-1$, respectively, and rotating by 180$^{\,\circ}$.
\end{lem}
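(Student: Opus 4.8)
The plan is to turn the statement into a bookkeeping problem about sign functions, using Lemmas \ref{rho} and \ref{rho2} together with the fact that the pair $(\calg_3,\calg_4)$ overlaps in $\calg$ \emph{without} crossing. First I would record what the two failed steps give. Since step (\ref{a}) does not apply, the local configuration of $(P_3,P_4)$ on the starting window $(\calg_3[u-1,u+1],\calg_4[u'-1,u'+1])$ is one of the three configurations of Figure \ref{mu2}; in each of them the two matchings enter the overlap through boundary edges, so in particular the seam vertices at the left end of $\calg$ are matched inside $\calg$ and $P_3|_\calg$, $P_4|_\calg$ are genuine perfect matchings of $\calg$ near $u,u'$. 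Since step (\ref{b}) does not apply, the operation $\rho$ occurs at no interior tile of $\calg$, so Lemma \ref{rho}, applied to $P_3|_\calg$ and $P_4|_\calg$ under the identifications $\calg\cong\calg_3[u,v]\cong\calg_4[u',v']$, shows that these two restrictions are \emph{complementary boundary matchings} of $\calg$: their union is the set of all boundary edges of $\calg$ and their intersection is empty. In particular each consists of boundary edges only, so Lemma \ref{rho2} applies to both.

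Next I would bring in the non-crossing of $(\calg_3,\calg_4)$. By the Remark following Definition \ref{resolution}, these two snake graphs overlap in $\calg$ but do not cross there. Working in the main subcase where both ends of the overlap are interior (that is, $u,u'>1$ and $v<d_3$, $v'<d_4$), only condition \ref{crossing}(i) can be responsible for a crossing, and its negation yields the two sign identities $f_3(e_{u-1})=f_3(e_v)$ and $f_4(e'_{u'-1})=f_4(e'_{v'})$. Thus the interior edges bounding the overlap on the left and on the right carry equal signs in each of $\calg_3$ and $\calg_4$. (The boundary end cases, where the right end of $\calg$ coincides with the end of $\calg_3$ or $\calg_4$, are to be handled by the same method using condition \ref{crossing}(ii).)

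I would then propagate. By Lemma \ref{rho2}, inside each of $P_3|_\calg$ and $P_4|_\calg$ all north/east boundary edges in the matching carry one sign and all south/west ones the opposite sign, throughout $\calg$. The chosen configuration of Figure \ref{mu2} at the left end fixes, via Lemma \ref{rho2}, these two signs relative to $f_3(e_{u-1})$ and $f_4(e'_{u'-1})$. Transporting them across $\calg$ by the uniform NE/SW sign rule and then invoking $f_3(e_{u-1})=f_3(e_v)$ and $f_4(e'_{u'-1})=f_4(e'_{v'})$ determines which boundary edges of $P_3|_\calg$, $P_4|_\calg$ are incident to the terminal tiles $v,v'$, hence the configuration of $(P_3,P_4)$ on $(\calg_3[v-1,v+1],\calg_4[v'-1,v'+1])$. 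Reading this window from its right end, i.e.\ relabeling $v+1,v,v-1$ by $s-1,s,s+1$ and rotating by $180^\circ$, I would verify case by case that each of the three starting possibilities lands in one of the eight configurations on the left of Figure \ref{mu}, so that $\mu$ applies there.

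The main obstacle is this final case analysis, and in particular ruling out that the terminal window is again one of the three $\mu$-failing configurations of Figure \ref{mu2}. This is exactly where the non-crossing identities are indispensable: a $\mu$-failing terminal window would force $f_3(e_{u-1})=-f_3(e_v)$ (respectively $f_4(e'_{u'-1})=-f_4(e'_{v'})$), i.e.\ the crossing relation, contradicting that $(\calg_3,\calg_4)$ do not cross in $\calg$. The residual work is purely bookkeeping: carrying out the $180^\circ$ rotation correctly and checking that the straight-versus-zigzag shape of the tiles $u,\dots,v$ only permutes the NE/SW labels without affecting the sign logic.
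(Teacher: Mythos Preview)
Your approach is essentially the same as the paper's: both arguments hinge on the sign identity $f_3(e_{u-1})=f_3(e_v)$ (and its analogue for $\calg_4$), use the failure of step~(\ref{a}) to pin down the starting window as one of the three configurations of Figure~\ref{mu2}, apply Lemmas~\ref{rho} and~\ref{rho2} to force $P_3|_\calg$ and $P_4|_\calg$ to be complementary boundary matchings with a fixed NE/SW sign pattern, and then read off the terminal window. The only packaging difference is that you obtain the key sign identity by invoking the Remark after Definition~\ref{resolution} (that $\calg_3,\calg_4$ do not cross in $\calg$), whereas the paper derives it directly from the crossing of $\calg_1,\calg_2$ together with the construction of $\calg_3,\calg_4$: it computes $f_3(e_{u-1})=f_1(e_{s-1})$, $f_3(e_v)=f_2(e'_{t'})=-f_1(e_t)$, and then uses $f_1(e_{s-1})=-f_1(e_t)$. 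Since that Remark is stated without proof, you should be prepared to supply this short computation. The paper also finishes by a direct determination of the edges at $G_v$ (splitting on whether $\calg_3[v-1,v+1]$ is zigzag or straight) rather than your contradiction argument, but the two are equivalent.
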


\begin{proof}
Fix a sign function $f$ on the overlap $\calg$ and denote by $f_i$ the induced sign function on $\calg_i,$ for $i=1,2,3,4.$ Since $\calg_1$ and $\calg_2$ cross in $\calg,$ one of the conditions (i) or (ii) in Definition  \ref{crossing} is satisfied. Since $u \neq 1$ and $u' \neq 1$, we must have condition (i), and, because of symmetry, we may assume   that $f_1(e_{s-1}) =-f_1(e_t)=\varepsilon.$ It follows from the definition of $\calg_3$ and $\calg_4$ that $f_3(e_{u-1})=f_1(e_{s-1})=-f_4(e'_{u'-1})$ and $f_3(e_{v})=-f_1(e_{t})=-f_4(e'_{v'}).$ In particular, we have 
 
\begin{equation} \label{*}
 f_3(e_{u-1})=f_3(e_{v})=-f_4(e'_{v'})=\varepsilon
\end{equation}
 
Since (\ref{a}) does not apply, the local configuration of $( P_3, P_4)$ on $(\calg_3[u-1,u+1], \calg_4[u'-1,u'+1])$ is one of the three cases in Figure \ref{mu2}, relabeling $s-1,s,s+1$ by $u-1,u,u+1$, respectively, where we assume without loss of generality that $\calg_3[u-1,u+1]$ is the zigzag graph and $\calg_4[u'-1,u'+1]$ is the straight graph. In particular, the north edge of $G_u$ is contained in $ P_3,$ and $f_3$ equals $-\varepsilon$ on this edge, and the south edge of $G'_{u'+1}$ is contained in $P_4$, and $f_3$ equals $- \varepsilon$ on this edge.
 
Since (\ref{b}) does not apply, Lemma \ref {rho} implies that $ P_3[u+1,v-1]$ and $ P_4[u'+1,v'-1]$ consist of boundary edges and are disjoint. It then follows from Lemma~\ref{rho2}   that for all edges $a \in P_3[u+1,v-1].$

\begin{equation*}
 f(a) =
\left \{
\begin{array}{rcl}
  \varepsilon &   & \mbox{ if $a \in  P_3$ is a south or a west edge; }   \\
  -\varepsilon&   & \mbox{ if $a \in  P_3$ is a north or a east edge; }  
\end{array}
\right.
\end{equation*}
and for all edges $a \in P_4[u'+1,v'-1]$
\begin{equation*}
 f(a) =
\left \{
\begin{array}{rcl}
  \varepsilon &   & \mbox{ if $a \in  P_4$ is a north or a east edge; }   \\
  -\varepsilon&   & \mbox{ if $a \in  P_4$ is a south or a west edge. }  
\end{array}
\right.
\end{equation*}

Now consider the local configuration at $G_v.$ Suppose first that $\calg_3[v-1,v+1]$ is a zigzag graph.   Without loss of generality assume that $G_v$ lies east of $G_{v-1}$, as in the following picture.

\begin{center}
\scalebox{1}{ 
\begingroup%
  \makeatletter%
  \providecommand\color[2][]{%
    \errmessage{(Inkscape) Color is used for the text in Inkscape, but the package 'color.sty' is not loaded}%
    \renewcommand\color[2][]{}%
  }%
  \providecommand\transparent[1]{%
    \errmessage{(Inkscape) Transparency is used (non-zero) for the text in Inkscape, but the package 'transparent.sty' is not loaded}%
    \renewcommand\transparent[1]{}%
  }%
  \providecommand\rotatebox[2]{#2}%
  \ifx\svgwidth\undefined%
    \setlength{\unitlength}{82.96038208bp}%
    \ifx\svgscale\undefined%
      \relax%
    \else%
      \setlength{\unitlength}{\unitlength * \real{\svgscale}}%
    \fi%
  \else%
    \setlength{\unitlength}{\svgwidth}%
  \fi%
  \global\let\svgwidth\undefined%
  \global\let\svgscale\undefined%
  \makeatother%
  \begin{picture}(1,0.58371236)%
    \put(0,0){\includegraphics[width=\unitlength]{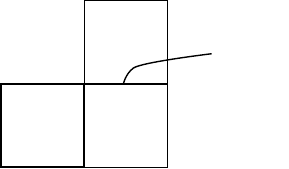}}%
    \put(0.04158612,0.09926731){\color[rgb]{0,0,0}\makebox(0,0)[lb]{\smash{$v\!-\!1$}}}%
    \put(0.33088083,0.40784834){\color[rgb]{0,0,0}\makebox(0,0)[lb]{\smash{$v\!+\!1$}}}%
    \put(0.38873977,0.09926731){\color[rgb]{0,0,0}\makebox(0,0)[lb]{\smash{$v$}}}%
    \put(0.74691421,0.37134211){\color[rgb]{0,0,0}\makebox(0,0)[lb]{\smash{$e_v$}}}%
  \end{picture}%
\endgroup%
}
\end{center}
 We know from equation (\ref{*}) that $f_3(e_v)=\varepsilon.$ This implies that $f_3$ equals $-\varepsilon$ on the north edge of $G_{v-1},$ and therefore the north edge of $G_{v-1}$ is an element of $ P_3.$ 
Similarly, if the south edge of $G_{v-1}$ is a boundary edge, then it is in $P_3$ since it has sign $\varepsilon$; if the south edge of $G_{v-1}$ is not a boundary edge, then $G_{v-2}$ is south of $G_{v-1}$ and the east edge of $G_{v-2}$ has sign $-\varepsilon$, hence is in $P_3$.  It follows that the southwest corner of $G_v$ is matched by an edge not in $G_v$. Therefore
the east edge of $G_v$ must be contained in $ P_3.$ This implies that the local configuration of $( P_3, P_4)$ on $(\calg_3[v-1,v+1], \calg_4[v'-1,v'+1])$ is one of the eight configurations in Figure~\ref{mu} after rotating by $180^{\circ}$ and relabeling $v=s, \ v-1=s+1, \ v+1=s-1.$
 This proves the statement in this case.
 
Now assume that $\calg_3[v-1,v+1]$ is straight. Then $\calg_4[v'-1,v'+1]$ is zigzag, and we know from equation (\ref{*}) that $f_4(e'_{v'})=-\varepsilon.$ Then an argument similar to the one above shows that the east edge of $G'_{v'}$ belongs to $ P_4,$ and again we conclude that the local configuration is one of the eight in Figure \ref{mu}.
\end{proof}

Our next step is to define $  \psi ( P_5, P_6)$.
If $1 <u,\ v' <d_4,\ 1 <u'$ and $v<d_3$ then let $  \psi( P_5, P_6)$ be
\begin{align*}
 (\tau_{u-1,1} ( P_5) \cup \eta_1 \cup \tau_{d_3-v,2}( P_6), \tau_{u-1,2}( P_5 ) \cup \eta_2 \cup \tau_{d_3-v,1}( P_6))
\end{align*}
where $\tau$ is given by Figure \ref{tau}, and the pair $(\eta_1,\eta_2)$  is the unique completion of the matching on $\calg_1\sqcup \calg_2$ given by Lemma \ref{eta} below.

\begin{figure}\begin{center}
  \scalebox{0.70}{\Large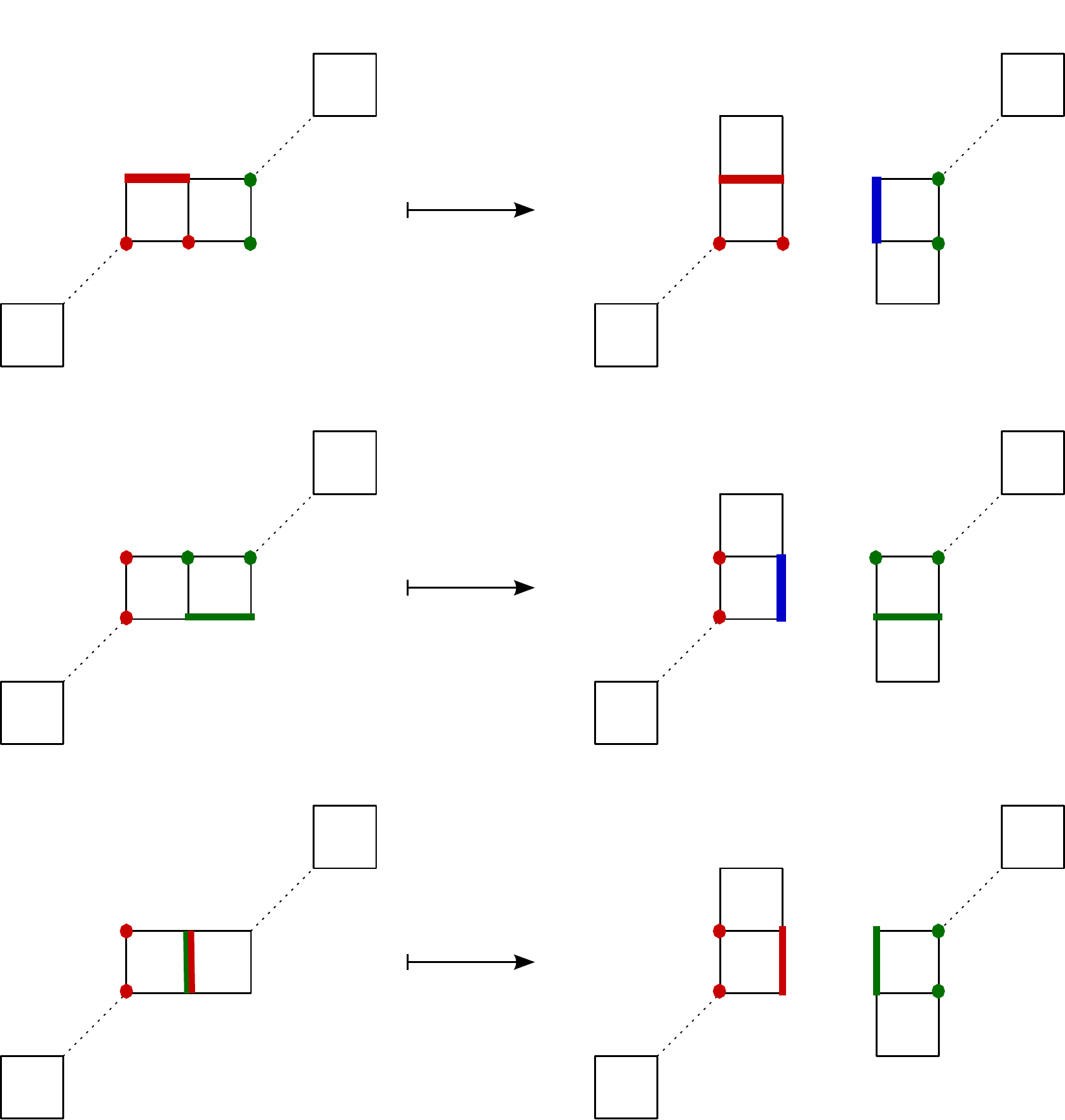}
 \caption{The operation $\tau$. The new edges are blue.}
 \label{tau}\end{center}
\end{figure}

If $u=1, v'=d_4, 1=u'$ or $v=d_3$, respectively, then remove the term $\tau_{u-1,1}( P_5)$, $\tau_{d_2-v,2}( P_6)$, $\tau_{u-1,2}( P_5)$ or $\tau_{d_3-v,1} ( P_6),$ respectively, from the definition above.


\begin{lem} \label{eta} 
 In the situation above, there exists a unique $\eta_1$ and a unique $\eta_2$ which  consist of boundary edges of $\calg$ and which are complementary perfect matchings on the overlap $\calg.$ 
\end{lem}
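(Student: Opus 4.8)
The plan is to reduce the statement to the classification of the boundary matchings of a snake graph and then pin down the correct one at each seam using the crossing hypothesis. First I would record the only candidates. Since $\calg$ is a snake graph, it is standard (and follows by induction on the number of tiles, as for the minimal and maximal matchings) that it has exactly two perfect matchings consisting entirely of boundary edges, and by Lemma \ref{rho} these two are complementary: their union is the set of all boundary edges of $\calg$ and their intersection is empty. Consequently any complementary pair of boundary matchings of $\calg$ must be exactly this pair, so the only freedom in $(\eta_1,\eta_2)$ is which of the two is assigned to $\calg_1$ and which to $\calg_2$. Lemma \ref{rho2} gives a convenient coordinate on this set: a boundary matching $\eta$ is determined by a single sign $\sigma(\eta)$, the common value of $f$ on the north and east edges of $\eta$ (equivalently $-\sigma(\eta)$ on its south and west edges), and the two matchings are distinguished by $\sigma=+$ versus $\sigma=-$.

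Next comes the seam analysis. The partial matchings $\tau_{u-1,1}(P_5)\cup\tau_{d_3-v,2}(P_6)$ and $\tau_{u-1,2}(P_5)\cup\tau_{d_3-v,1}(P_6)$ are perfect matchings of $\calg_1[1,s-1]\cup\calg_1[t+1,d]$ and $\calg_2[1,s'-1]\cup\calg_2[t'+1,d']$, so in particular they already cover both endpoints of each seam edge $e_{s-1},e_t$ (respectively $e'_{s'-1},e'_{t'}$). Hence the completing matching $\eta_1$ cannot re-use $e_{s-1}$ or $e_t$, and likewise $\eta_2$ cannot use $e'_{s'-1}$ or $e'_{t'}$; I would check, running through the possible directions of the tiles $G_{s-1},G_s$ and $G_t,G_{t+1}$, that omitting these seam edges is exactly the condition for $\eta_i$ together with the outside matching to be a perfect matching of $\calg_i$. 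Since $e_{s-1}$ is a south-or-west boundary edge of $\calg$, Lemma \ref{rho2} shows that $e_{s-1}\notin\eta_1$ forces $\sigma(\eta_1)=f_1(e_{s-1})$, while $e_t$ being north-or-east forces the matching omitting it to have $\sigma=f_1(e_t)$. These two requirements are simultaneously met by one matching precisely when $f_1(e_{s-1})=-f_1(e_t)$, which is exactly condition (i) of Definition \ref{crossing}; because $\calg_1$ and $\calg_2$ cross in $\calg$ this holds, and it determines $\eta_1$ uniquely.

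Finally I would transfer this to $\calg_2$ and deduce complementarity. Recall that here $\tp$, so the maximality relations $f_1(e_{s-1})=-f_2(e'_{s'-1})$ and $f_1(e_t)=-f_2(e'_{t'})$ apply; they give the equivalent identity $f_2(e'_{s'-1})=-f_2(e'_{t'})$ on the $\calg_2$ side, so $\eta_2$ exists and is unique by the identical argument. Complementarity is then immediate from the signs:
\[
\sigma(\eta_1)=f_1(e_{s-1})=-f_2(e'_{s'-1})=-\sigma(\eta_2),
\]
so $\eta_1$ and $\eta_2$ are the two distinct boundary matchings of $\calg$, hence complementary. The boundary variants $u=1,\ v=d_3,\ u'=1,\ v'=d_4$ are handled by simply dropping the corresponding seam constraint, as in the definition of $\psi$.

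The step I expect to be the main obstacle is the local case bookkeeping in the seam analysis: one must verify, for each relative position of the tiles meeting at a seam, that ``the outside matching covers the seam vertices'' really does translate into ``$\eta_i$ omits exactly the seam edge,'' and that no other boundary matching could complete the matching of $\calg_i$. Once this local verification is in place, the sign calculus built on Lemmas \ref{rho} and \ref{rho2} together with the crossing condition makes existence, uniqueness and complementarity fall out uniformly, with the crossing hypothesis being precisely what excludes the ``equal'' (non-complementary) alternative that would occur for a non-crossing overlap.
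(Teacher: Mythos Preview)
Your overall strategy coincides with the paper's: both reduce to the two complementary boundary matchings of $\calg$, encode them by a sign via Lemma~\ref{rho2}, and use the sign identity $f_1(e_{s-1})=-f_1(e_t)$ coming from the crossing hypothesis to pin down $\eta_1$. The paper phrases this sequentially (complete from the $s$-seam, then verify compatibility at the $t$-seam) and derives the sign identity from the \emph{non}-crossing of $\calg_3,\calg_4$ (equation~(\ref{star})) rather than directly from Definition~\ref{crossing}, but these are the same argument.

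Your seam analysis, however, is not right as stated. You claim that because $\tau_{u-1,1}(P_5)$ already covers the endpoints of $e_{s-1}$, the completion $\eta_1$ must \emph{omit} $e_{s-1}$. But $\eta_1$ is itself a perfect matching of $\calg$, so it covers those same two vertices; for the union to be a matching of $\calg_1$ both pieces must cover them by the \emph{same} edge, and the only common edge is $e_{s-1}$. So the correct constraint is $e_{s-1}\in\eta_1$ (and likewise $e_t\in\eta_1$), the opposite of what you wrote. With this correction Lemma~\ref{rho2} gives $f_1(e_{s-1})=-\sigma(\eta_1)$ from the $\SW$ side and $f_1(e_t)=\sigma(\eta_1)$ from the $\NE$ side, and compatibility is exactly $f_1(e_{s-1})=-f_1(e_t)$, as desired. (Your application of Lemma~\ref{rho2} to $e_t$ also has a sign slip: omitting a $\NE$ edge would force $\sigma=-f_1(e_t)$, not $f_1(e_t)$; your two errors cancel, which is why your conclusion still looks right.) You also still owe the verification that the $\tau$ construction really does force $e_{s-1}\in\tau_{u-1,1}(P_5)$ in every case; this requires going through the cases of Figure~\ref{tau}, which is precisely the ``local case bookkeeping'' you flagged and which the paper carries out explicitly.
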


\begin{proof}
The uniqueness of $\eta_1$ and $\eta_2$ follows from the simple fact that if one chooses an edge $a$ in the first tile of a snake graph then there is a unique way to complete it to a perfect matching consisting only of boundary edges, except possibly for the first edge $a.$ Indeed, for example if $u \neq 1,$ the matching of the tile $G_{u-1}$ is given by $\tau_{u-1,1}( P_5)$, and there is one and only one choice to complete a matching on the tile $G_u$ using boundary edges only.

Assume $u\ne 1$ and $u'\ne 1$. There exists a unique way $\eta_1$ to extend $\tau_{u-1,1}(P_5)$ into $\calg$ using only boundary edges. We need to check that $\eta_1$ is compatible with $\tau_{d_3-v,2}(P_6)$. If $d_3=v$, there is nothing to show, so
 suppose  $ v<d_3$. Recall that we assume $u \neq v$. Since $\calg_3$ and $\calg_4$ overlap in $\calg$, we have   $f_{3}(e_{u-1})=-f_4(e'_{u'-1})$, and since $\calg_3,\calg_4$ do not cross in $\calg$, we have $f_3(e_{u-1})= f_3(e_v),$  $f_4(e'_{u'-1})=f_4(e'_{v'})$, by Definition \ref{crossing}. Then, using the definition of $\calg_3,\calg_4$, we have
 
\begin{align} \label{star}
 f_1(e_{s-1})=f_3(e_{u-1})= f_3(e_v)=f_2(e'_{t'})=\varepsilon \nonumber\\
- f_3(e_{u-1})=f_4(e'_{u'-1})=-\varepsilon\nonumber  \\
f_1(e_t) = f_4(e'_{v'})=f_4(e'_{u'-1})=- \varepsilon
\end{align}
where $\varepsilon= \pm.$ Then we have the {following local configuration} on $\calg_1$ and $\calg_2,$ respectively.
\begin{center}
\scalebox{0.8}{ 
\begingroup%
  \makeatletter%
  \providecommand\color[2][]{%
    \errmessage{(Inkscape) Color is used for the text in Inkscape, but the package 'color.sty' is not loaded}%
    \renewcommand\color[2][]{}%
  }%
  \providecommand\transparent[1]{%
    \errmessage{(Inkscape) Transparency is used (non-zero) for the text in Inkscape, but the package 'transparent.sty' is not loaded}%
    \renewcommand\transparent[1]{}%
  }%
  \providecommand\rotatebox[2]{#2}%
  \ifx\svgwidth\undefined%
    \setlength{\unitlength}{250.50859375bp}%
    \ifx\svgscale\undefined%
      \relax%
    \else%
      \setlength{\unitlength}{\unitlength * \real{\svgscale}}%
    \fi%
  \else%
    \setlength{\unitlength}{\svgwidth}%
  \fi%
  \global\let\svgwidth\undefined%
  \global\let\svgscale\undefined%
  \makeatother%
  \begin{picture}(1,0.25693778)%
    \put(0,0){\includegraphics[width=\unitlength]{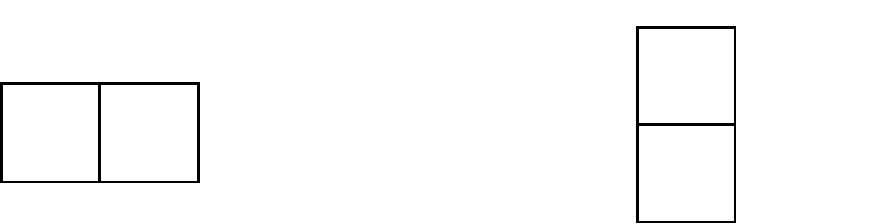}}%
    \put(0.01985078,0.09123288){\color[rgb]{0,0,0}\makebox(0,0)[lb]{\smash{$s\!-\!1$}}}%
    \put(0.11698223,0.09348109){\color[rgb]{0,0,0}\makebox(0,0)[lb]{\smash{$\epsilon$}}}%
    \put(0.14251008,0.0176052){\color[rgb]{0,0,0}\makebox(0,0)[lb]{\smash{$-\epsilon$}}}%
    \put(0.16393809,0.1681489){\color[rgb]{0,0,0}\makebox(0,0)[lb]{\smash{$\epsilon$}}}%
    \put(0.23831518,0.09744278){\color[rgb]{0,0,0}\makebox(0,0)[lb]{\smash{$-\epsilon$}}}%
    \put(0.76677632,0.09123288){\color[rgb]{0,0,0}\makebox(0,0)[lb]{\smash{$-\epsilon$}}}%
    \put(0.74608219,0.04220126){\color[rgb]{0,0,0}\makebox(0,0)[lb]{\smash{$s'\!-\!1$}}}%
    \put(0.77467987,0.15736108){\color[rgb]{0,0,0}\makebox(0,0)[lb]{\smash{$s'$}}}%
    \put(0.78274384,0.23195735){\color[rgb]{0,0,0}\makebox(0,0)[lb]{\smash{$\epsilon$}}}%
    \put(0.15528409,0.09265253){\color[rgb]{0,0,0}\makebox(0,0)[lb]{\smash{$s$}}}%
    \put(0.70403531,0.15961924){\color[rgb]{0,0,0}\makebox(0,0)[lb]{\smash{$\epsilon$}}}%
    \put(0.84508079,0.16131284){\color[rgb]{0,0,0}\makebox(0,0)[lb]{\smash{$-\epsilon$}}}%
  \end{picture}%
\endgroup%
}
\end{center}
Since $G_{s-1}$ is matched by $\tau_{u-1,1}(P_5),$ the north edge of $G_s$ is not in $P_1.$ Using Lemma \ref{rho2}, it follows that
 
 \begin{equation*}
 f_1(a) = 
\left \{
\begin{array}{rcl}
  \varepsilon &   & \mbox{ if $a \in  P_1 \cap \calg$ is a south or a west edge } \ ;   \\
  -\varepsilon&   & \mbox{ if $a \in  P_1 \cap \calg$ is a north or a east edge }  \ \,  .
\end{array}
\right.
\end{equation*}
In particular, equation (\ref{star}) implies that the east and west edges of $G_t$ are not in $\eta_1$ if $G_{t+1}$ is north of $G_t$ in $\calg_1$, see the left picture in the figure below.
\begin{center}
 \scalebox{0.8}{ 
\begingroup%
  \makeatletter%
  \providecommand\color[2][]{%
    \errmessage{(Inkscape) Color is used for the text in Inkscape, but the package 'color.sty' is not loaded}%
    \renewcommand\color[2][]{}%
  }%
  \providecommand\transparent[1]{%
    \errmessage{(Inkscape) Transparency is used (non-zero) for the text in Inkscape, but the package 'transparent.sty' is not loaded}%
    \renewcommand\transparent[1]{}%
  }%
  \providecommand\rotatebox[2]{#2}%
  \ifx\svgwidth\undefined%
    \setlength{\unitlength}{184.42636719bp}%
    \ifx\svgscale\undefined%
      \relax%
    \else%
      \setlength{\unitlength}{\unitlength * \real{\svgscale}}%
    \fi%
  \else%
    \setlength{\unitlength}{\svgwidth}%
  \fi%
  \global\let\svgwidth\undefined%
  \global\let\svgscale\undefined%
  \makeatother%
  \begin{picture}(1,0.34476359)%
    \put(0,0){\includegraphics[width=\unitlength]{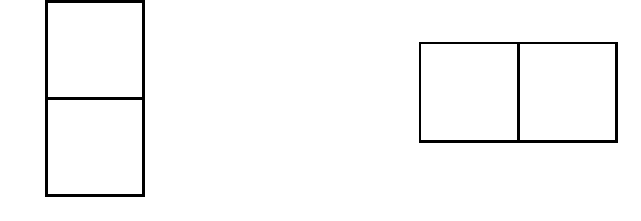}}%
    \put(0.71402214,0.19151131){\color[rgb]{0,0,0}\makebox(0,0)[lb]{\smash{$t$}}}%
    \put(0.84539477,0.19399005){\color[rgb]{0,0,0}\makebox(0,0)[lb]{\smash{$t\!+\!1$}}}%
    \put(0.71712059,0.29995568){\color[rgb]{0,0,0}\makebox(0,0)[lb]{\smash{$\epsilon$}}}%
    \put(0.78218721,0.15680913){\color[rgb]{0,0,0}\makebox(0,0)[lb]{\smash{$-\epsilon$}}}%
    \put(0.69976949,0.08740474){\color[rgb]{0,0,0}\makebox(0,0)[lb]{\smash{$-\epsilon$}}}%
    \put(0.71402214,0.19151131){\color[rgb]{0,0,0}\makebox(0,0)[lb]{\smash{$t$}}}%
    \put(0.71712059,0.29995568){\color[rgb]{0,0,0}\makebox(0,0)[lb]{\smash{$\epsilon$}}}%
    \put(0.23258774,0.1040727){\color[rgb]{0,0,0}\makebox(0,0)[lb]{\smash{$\epsilon$}}}%
    \put(0.12414338,0.16046377){\color[rgb]{0,0,0}\makebox(0,0)[lb]{\smash{$-\epsilon$}}}%
    \put(0.13281893,0.1040727){\color[rgb]{0,0,0}\makebox(0,0)[lb]{\smash{$t$}}}%
    \put(0.10245451,0.25589481){\color[rgb]{0,0,0}\makebox(0,0)[lb]{\smash{$t\!+\!1$}}}%
    \put(0.13281893,0.00864166){\color[rgb]{0,0,0}\makebox(0,0)[lb]{\smash{$\epsilon$}}}%
    \put(-0.00165208,0.1040727){\color[rgb]{0,0,0}\makebox(0,0)[lb]{\smash{$-\epsilon$}}}%
    \put(0.02437457,0.25589481){\color[rgb]{0,0,0}\makebox(0,0)[lb]{\smash{$\epsilon$}}}%
  \end{picture}%
\endgroup%
}
\end{center}
Moreover if $G_{t-1}$ is west of $G_t$ then the north edge of $G_{t-1}$ is not in $\eta_1$, since its sign is $\varepsilon$. Similarly, if $G_{t+1}$ is east of $G_t$, see the right picture in the figure above, 
then 
the north and south edges of $G_t$ are not in $\eta_1$. 
Moreover, if $G_{t-1}$ is south of $G_t$ then the east edge of $G_{t-1}$ is not in $\eta_1$, since its sign is $\varepsilon$.
This shows that the completion $\eta_1$ is compatible with $\tau_{d_3-v,2}(P_6).$ 

Similarly,  since $G'_{s'-1}$ is matched by $\tau_{u-1,2}(P_5),$ the west edge of $G'_{s'}$ is not in $P_2.$ Therefore
 \begin{equation*}
 f_2(b) =
\left \{
\begin{array}{rcl}
  \varepsilon &   & \mbox{ if $a \in  P_2 \cap \calg$ is a north or a east edge }  \  \, ; \\
  -\varepsilon&   & \mbox{ if $a \in  P_2 \cap \calg$ is a south or a west edge }  \ .
\end{array}
\right.
\end{equation*}
Therefore $\eta_1$ and $\eta_2$ are complementary on the overlap $\calg.$ Again one can show that $\eta_2$ is compatible with $\tau_{d_3-v,1}(P_6)$ using the sign conditions. The cases $u=1$ or $v=1$ are similar. Note that if $u=1$ and $v'=d_4$, the matching $P_1$ is defined by $\eta_1$ only and, in this case, the complementarity condition is necessary for the uniqueness of the pair $(\eta_1,\eta_2)$.
\end{proof}

\subsection{Definition of $\psi$ in the case $u =v$} If $u=v$ then $u'=v'.$ Let $ \nu^{-1}$ be the inverse of the map given in the Figure  \ref{nu}. If $u>1,\ u'>1,\  d_3 \geq u+1,\  d_4 \geq u'+1$ then define $  \psi ( P_3,  P_4)$ as 

\begin{equation*}
 ( P_3 [1,u-1) \cup \nu^{-1}_{u,1} \cup  P_4(u'+1,d_4], \  P_4[1,u'-1) \cup \nu^{-1}_{u,2} \cup  P_3(u+1,d_3] )
\end{equation*}
 
 If $u=1,\ u'=1,\ d_3=u+1,$ or $d_4=u'+1$, respectively, then remove the term $ P_3[1,u-1), \ P_4[1,u'-1),\  P_3(u+1,d_3],$ or $ P_4(u'+1,d_4],$ respectively, from the definition above, and use the appropriate restrictions of the maps $\nu^{-1}.$
 
Now we define $  \psi ( P_5, P_6)$ where $u=v.$ If $u>1,u'>1,v<d_3,v'<d_4$ then let $\psi(P_5,P_6)$ be

\begin{equation*}
 ( \tau_{u-1,1}(P_5) \cup \tau_{d_3-u,2}(P_6), \tau_{u-1,2}(P_5)\cup \tau_{d_3-u,1}(P_6) )
\end{equation*} 
 
 Note that both $G_{u-1}, G_{u}=G'_{u'}, G'_{u'+1}$ and $G'_{u'-1},G_{u}=G'_{u'},G_{u+1}$ form a straight piece.
  
Otherwise, let $\psi(P_5,P_6)$ be

$$
\begin{cases}
 (\eta_1 \cup \tau_{d_3-u,2}(P_6), \ P_5 \cup \eta_2 \cup \tau_{d_3-u,1}(P_6)) & \mbox{ if $u=1$}\\
 (\tau_{u-1,1}(P_5) \cup \tau_{d_3-u,2}  (P_6), \ \tau_{u-1,2}(P_5) \cup \eta_2 \cup P_6) & \mbox{ if $u'=d_4$}\\
 (P_5 \cup \eta_1\cup \tau_{d_3-u,2}(P_6), \ \eta_2 \cup \tau_{d_3-u,1}(P_6)) & \mbox{ if $u'=1$}\\
 ( \tau_{u-1,1}(P_5) \cup \eta_1 \cup P_6, \  \tau_{u-1,2}(P_5) \cup \eta_2) & \mbox{ if $u=d_3$}\\
 (\eta_1 \cup P_6, \ P_5  \cup \eta_2) & \mbox{ if $u=1$ and $u'=d_4$}\\
 (P_5 \cup \eta_1 ,  \eta_2 \cup P_6) & \mbox{ if $u=d_3$ and $u'=1$}.
\end{cases}
$$
where $\eta_i$ is the unique completion using only boundary edges of $\calg_i, \ i=1,2$. 

\subsection{The map $\psi $ is well-defined}
Lemma \ref {psi} implies that  $\psi (P_3,P_4)$ is a perfect matching of $\calg_1 \sqcup \calg_2$,  if $u\ne v$, and, for $u=v$, this follows directly from the definition. Therefore to show that $\psi $ is well-defined it only remains to prove the following Lemma.
\begin{lem}\label{lem 34}
 $\psi (P_5,P_6)$ is a perfect matching of $\calg_1 \sqcup \calg_2.$  
\end{lem}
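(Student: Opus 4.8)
The plan is to reduce the claim to a vertex-by-vertex check on each of the two components of $\psi(P_5,P_6)$ separately, using the decomposition of $\calg_1$ and $\calg_2$ into three consecutive pieces around the overlap. Throughout I would treat the generic case $1<u,\,u'$ and $v<d_3,\,v'<d_4$ with $u\ne v$, where (recalling $u=s,\ v=t,\ u'=s',\ v'=t'$ from Definition~\ref{resolution}) we have $\psi(P_5,P_6)=\big(\tau_{u-1,1}(P_5)\cup\eta_1\cup\tau_{d_3-v,2}(P_6),\ \tau_{u-1,2}(P_5)\cup\eta_2\cup\tau_{d_3-v,1}(P_6)\big)$. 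The boundary cases ($u=1$, $u'=1$, $v=d_3$, $v'=d_4$) and the degenerate case $u=v$ will then follow from the same argument with the absent $\tau$-terms deleted, exactly as $\psi$ itself was defined by suppressing those terms.

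First I would fix attention on the first component $\psi(P_5,P_6)_1=\tau_{u-1,1}(P_5)\cup\eta_1\cup\tau_{d_3-v,2}(P_6)$ and write $\calg_1$ as the concatenation of $\calg_1[1,s-1]$, the overlap $\calg=\calg_1[s,t]$, and $\calg_1[t+1,d]$, glued at the interior edges $e_{s-1}$ and $e_t$. By inspection of the cases in Figure~\ref{tau}, the operation $\tau_{u-1,1}$ sends the matching $P_5$ of $\calg_5=\calg_1[1,s-1]\cup\ocalg_2[s'-1,1]$ to a set of edges covering exactly the vertices of the $\calg_1$-half $\calg_1[1,s-1]$ (up to the shared vertices on $e_{s-1}$), while $\tau_{d_3-v,2}$ sends $P_6$ to a set of edges covering $\calg_1[t+1,d]$; the complementary halves $\tau_{u-1,2}(P_5)$ and $\tau_{d_3-v,1}(P_6)$ cover the corresponding vertices of $\calg_2$. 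Lemma~\ref{eta} furnishes $\eta_1$ as a perfect matching of the overlap $\calg$ consisting only of boundary edges, hence covering every interior vertex of the overlap tiles exactly once. Thus each of the three pieces matches the interior of its own tiles.

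The heart of the argument, and the step I expect to be the main obstacle, is checking that no vertex is left uncovered or doubly covered at the two seams $e_{s-1}$ and $e_t$, where consecutive pieces share vertices. This is precisely what Lemma~\ref{eta} was arranged to provide: $\eta_1$ is defined as the \emph{unique} boundary-edge completion that extends $\tau_{u-1,1}(P_5)$ across $e_{s-1}$, which settles the left seam by construction, and the sign computation (\ref{star}) together with Lemma~\ref{rho2} shows $\eta_1$ is compatible with $\tau_{d_3-v,2}(P_6)$ at $e_t$, settling the right seam. Concretely, at each endpoint of $e_{s-1}$ (respectively $e_t$) I would distinguish the two possibilities — either $e_{s-1}\in\psi(P_5,P_6)_1$, so both shared vertices are covered by $e_{s-1}$, or $e_{s-1}\notin\psi(P_5,P_6)_1$, in which case one incident edge comes from $\tau_{u-1,1}(P_5)$ and one from $\eta_1$ — and confirm via the signs that exactly one alternative occurs, so each shared vertex meets exactly one edge of $\psi(P_5,P_6)_1$. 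Assembling the three matched pieces with these two seam verifications gives $\psi(P_5,P_6)_1\in\match(\calg_1)$, and the symmetric computation with $\tau_{u-1,2}(P_5)$, $\eta_2$, $\tau_{d_3-v,1}(P_6)$, using the complementarity of $(\eta_1,\eta_2)$ from Lemma~\ref{eta}, yields $\psi(P_5,P_6)_2\in\match(\calg_2)$.

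Finally I would dispose of the remaining cases; given the sign analysis already carried out, the residual difficulty is purely the bookkeeping. When one of $u=1$, $u'=1$, $v=d_3$, $v'=d_4$ holds, the corresponding $\tau$-term is absent and $\calg_1$ (or $\calg_2$) begins or ends with the overlap itself, so only one seam survives and the compatibility argument above applies to it, while the role of $\eta_1$ as the unique boundary completion covers the now-free end. In the case $u=v$ the overlap is a single tile, no $\eta_i$ enters in the interior subcase, and by the straight-piece observation recorded just before the definition of $\psi$ for $u=v$ the two completions $\tau_{u-1,\cdot}(P_5)$ and $\tau_{d_3-u,\cdot}(P_6)$ meet across this single tile; this is a direct specialization of the generic argument, with the $\eta_i$ reinserted only in the listed boundary subcases.
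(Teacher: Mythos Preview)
Your approach is essentially the paper's: for $u\ne v$ both you and the paper simply invoke Lemma~\ref{eta}, and for the interior $u=v$ subcase both appeal to the straight-piece observation to see that the $\tau$-outputs match the single overlap tile from both sides.

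The one place where your sketch is thinner than the paper is the boundary $u=v$ subcases, which you dismiss as ``$\eta_i$ reinserted'' and ``purely bookkeeping.'' In the case $u=v$, $u=1$ (so $s=t=1$), the graph $\calg_5$ is $\ocalg_2[k',1]$ with $k'<s'-1$, so between $P_5$ (which matches $\calg_2[1,k']$) and the overlap tile $G'_{s'}$ there sits the zigzag segment $\calg_2[k'+1,s'-1]$, and the completion $\eta_2$ must cover all of it, not just the overlap. The paper checks explicitly (Figure~\ref{figlem34}) that on this zigzag piece the unique boundary-edge completion is compatible at both ends; this is a genuine verification rather than formal bookkeeping, and you should include it.
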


\begin{proof} If $u\ne v$, this follows from Lemma \ref{eta}.
Suppose now  that $u=v.$ If $u>1,u'>1, v<d_3,$ and $v' <d_4,$ we only need to check that $\psi(P_5,P_6)$ is well-defined on the tiles $G_s$ and $G'_{s'}$ of $\calg_1$ and $\calg_2,$ respectively, which form the overlap. The local configuration of $\calg_1[s-1,s+1] $ and  $\calg_2[s'-1,s'+1] $ is as follows.
\[
\begin{tikzpicture}
 \node at (0,0) {\begin{tikzpicture}
 \draw (0,0) -- (1,0) -- (1,3)-- (0,3)--(0,0);
 \draw (0,1)--(1,1) (0,2)--(1,2);
 \node at (.5,.5) {$s-1$};
 \node at (.5,1.5) {$s$};
 \node at (.5,2.5) {$s+1$};
 \node at (0,1) {$\bullet$};
  \node at (1,1) {$\bullet$};
   \node at (0,2) {$\bullet$};
    \node at (1,2) {$\bullet$};
\end{tikzpicture}};
\node at (3,-.5){\begin{tikzpicture}
 \draw (5,1) -- (8,1) -- (8,2)-- (5,2)--(5,1);
 \draw (6,1)--(6,2) (7,1)--(7,2);
 \node at (5.5,1.5) {$s'-1$};
 \node at (6.5,1.5) {$s'$};
 \node at (7.5,1.5) {$s'+1$};
\end{tikzpicture}
};
\end{tikzpicture}
\]
The south vertices of $G_s$ are matched by edges of $G_{u-1}$ and the north vertices are matched by the edges of $G_{u'+1}.$ Hence, this yields a matching on $\calg_1.$ A similar argument works for $\calg_2$.

Now suppose $u=1.$ Notice that we still assume $u=v.$ Then by  definition of $\psi$ in this case, we have
\begin{equation*}
 \psi(P_5,P_6) =(\eta_1 \cup \tau_{d_3-u,2}(P_6), P_5 \cup \eta_2\cup \tau_{d_3-v,1}(P_6)).
\end{equation*}
Observe that $\eta_1 \cup \tau_{d_3-u,2}(P_6)$ yields a matching on $\calg_1$ by completion with $\eta_1.$ Now we need to show that $P_5 \cup \eta_2\cup\tau_{d_3-v,1}(P_6)$ is a matching of $\calg_2.$

Since we have $u=v=1,$ it follows that $s=t=1$ and $s'=t'$ in Definition  \ref{resolution}, and therefore $\calg_5= \ocalg_2[k',1]$ where $k' < s'-1$ is the largest integer such that $f_2(e'_{k'})=f_2(e'_{s'-1}).$ It follows that the subgraph $\calg_2[k'+1,s'-1]$ is a zigzag subgraph of $\calg_2$ and thus the graph $\calg_2$ has the  shape, shown in Figure \ref{figlem34}. Note that $\calg_2[s'-1,s'+1]$ is a straight subgraph.

\begin{figure}
  \scalebox{0.80}{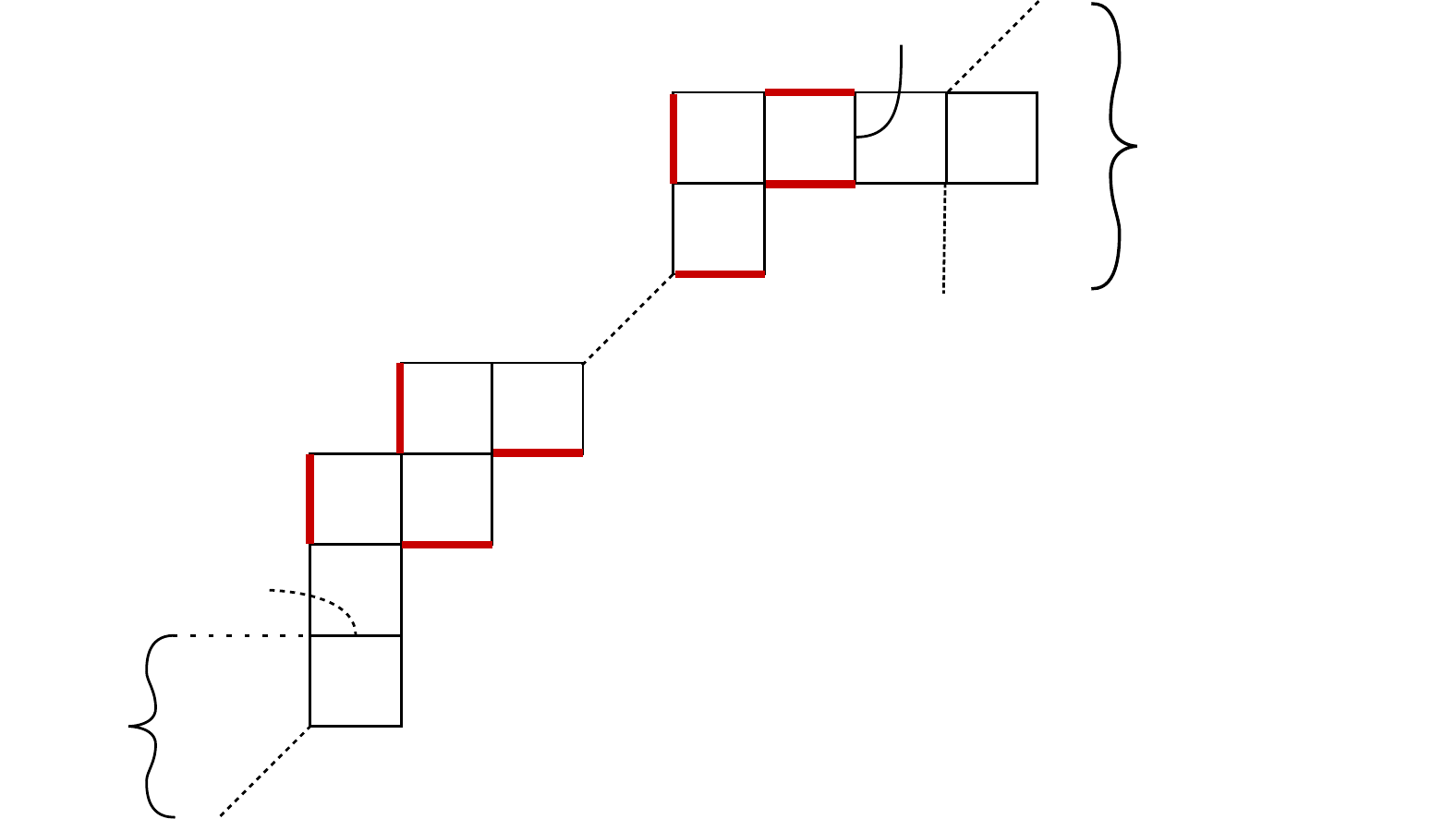}
 \caption{Proof of Lemma \ref{lem 34}}
 \label{figlem34}
\end{figure}

Moreover $\calg_2[1,k']$ is matched by $P_5$ and the tile $G'_{s'}$ is partially matched by $\tau_{d_3-v,1}(P_6).$ On the remaining zigzag graph $\calg_2[k'+1,s'-1]$ there is a unique completion $\eta_2$ consisting of all west and all south edges on the boundary and the north edge of $G'_{s'-1}$, see Figure \ref{figlem34}.

The other cases, $u=d_3, \ u'=1,$ or $u'=d_4$ are similar.
\end{proof}


%
%
%
%
%
%
%
%


\subsection{The map $\psi$ is the inverse of the map $\varphi$}
It follows immediately from the construction that $\varphi\comp \psi (P_3,P_4)=(P_3,P_4)$ for all $(P_3,P_4) \in \match (\calg_3 \sqcup \calg_4)$ and that
$\psi\circ \varphi(P_1,P_2)= (P_1,P_2)$ for all $(P_1,P_2)\in\match(\calg_1\sqcup\calg_2)$ such that $\varphi (P_1,P_2) \in \match (\calg_3 \sqcup \calg_4).$

Now let $(P_1,P_2) \in \match (\calg_1 \sqcup \calg_2)$ such that $\varphi(P_1,P_2) \in \match (\calg_5 \sqcup \calg_6).$ This means that $\varphi(P_1,P_2)$ is defined by case (iv) of the definition of $\varphi.$ In particular, the operation $\rho$ does not apply to the pair $(P_1,P_2)$ and hence Lemma \ref{rho} implies that $(P_1,P_2)$ consists of boundary edges on $\calg$ and are complementary.  Let us assume first that $s\ne 1,s' \neq 1, t \neq d$ and $t' \neq d',$ and that $s\ne t$. Then
\begin{align*}
 \varphi(P_1,P_2) = (P_1[1,s-1] \sqcup P_2[1,s'-1] \backslash \{ a_5 \}, P_2[t'+1,d'] \sqcup P_1[t+1,d] \backslash \{ a_6 \} )
\end{align*}
where $a_5$ (respectively $a_6$) is the glueing edge in the definition of $\calg_5$ (respectively $\calg_6$). Since $s \neq t$ we have $u \neq v$ and we must use the definition of $\psi$ given in section~\ref{sect 71}.  Note that $\tau_{u-1,1} (P_1[1,s-1]  \sqcup P_2[1,s'-1] \backslash \{ a_5 \})$ is exactly $P_1[1,s-1]$ and that $\tau_{d_3-v,2} (P_2[t'+1,d']  \sqcup P_1[t+1,d] \backslash \{ a_6 \})$ is exactly $P_1[t+1,d],$ since $d_6-(d_3-v)=d-t.$ Since the completions $\eta_1,\eta_2$ are unique, it follows from Lemma \ref{eta} that the first component of $\psi (\varphi(P_1,P_2))$ is equal to $P_1.$ By a similar argument one can show that the second component of $\psi(\varphi(P_1,P_2))$ is $P_2.$ If $t=s$ then the overlap consists of a single tile, and the result follows from Lemma \ref{lem 34}.

In the cases $s = 1, \ s' = 1,\  t = d, $ or $t' = d',$ a similar argument shows the same result. Note that the edges lost by restricting to $\calg_5 \sqcup \calg_6$ in the definition of $\varphi$ are recovered in the completions $\eta_1$ and $\eta_2$ in the definition of $\psi.$ This shows that $\psi \varphi$ is the identity.

It remains to show that $\varphi \psi (P_5,P_6)=(P_5,P_6)$ for all $(P_5,P_6) \in \match (\calg_5 \sqcup \calg_6).$

We shall need the following lemma.

\begin{lem}  \label {image psi}
 Let $(P_5, P_6) \in \match (\calg_5 \sqcup \calg_6)$ and let $(P_1,P_2) = \psi(P_5,P_6).$ Then
 
\begin{itemize}
 \item[(a)] \label{(a)} $P_1$ and $P_2$ do not contain interior edges of the overlap $\calg.$
 \item[(b)] \label{(b)} $P_1$ and $P_2$ do not have an edge of $\calg$ in common.
\end{itemize}
\end{lem}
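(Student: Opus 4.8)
The plan is to prove Lemma \ref{image psi} by unwinding the definition of $\psi$ on $\match(\calg_5\sqcup\calg_6)$ and tracking precisely which edges the matching $(P_1,P_2)=\psi(P_5,P_6)$ places on the overlap $\calg$. The key structural fact is that $P_1$ and $P_2$ restricted to $\calg$ are governed entirely by the operations $\tau$ (Figure \ref{tau}) and by the completion pair $(\eta_1,\eta_2)$ produced by Lemma \ref{eta}. By the very statement of Lemma \ref{eta}, the pair $(\eta_1,\eta_2)$ consists of \emph{boundary edges} of $\calg$ and is \emph{complementary} on the overlap. This already delivers both assertions on the portion of $\calg$ matched by $\eta_1,\eta_2$: boundary edges only gives part (a), and complementarity---meaning $\eta_1\cup\eta_2$ is a disjoint cover of the boundary edges of $\calg$, equivalently $\eta_1\cap\eta_2=\emptyset$---gives part (b). So the real content is to check that no interior edge of $\calg$, and no shared edge of $\calg$, sneaks in through the boundary pieces of the construction, namely the $\tau$-images of $P_5$ and $P_6$.

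First I would reduce to the generic case $1<u,\ 1<u',\ v<d_3,\ v'<d_4$ (with $u\neq v$), where
\[
(P_1,P_2)=\bigl(\tau_{u-1,1}(P_5)\cup\eta_1\cup\tau_{d_3-v,2}(P_6),\ \tau_{u-1,2}(P_5)\cup\eta_2\cup\tau_{d_3-v,1}(P_6)\bigr).
\]
The overlap $\calg$ sits strictly between the blocks $\calg[1,u-1]$ (matched by the $\tau_{u-1,\cdot}(P_5)$ terms) and $\calg[v+1,\cdot]$ (matched by the $\tau_{d_3-v,\cdot}(P_6)$ terms); on $\calg$ itself only $\eta_1$ and $\eta_2$ act. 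Here I would invoke the inspection of Figure \ref{tau}: the operation $\tau_{i,k}$ modifies the matching only up to tile $i$ (resp. from a tile onward), so the $\tau$-images contribute no edge \emph{interior} to $\calg$. Concretely, the glueing edges $a_5,a_6$ in the definitions of $\calg_5,\calg_6$ are precisely the interior edges $e_{s-1}$, $e_t$ of $\calg_1$, $\calg_2$ adjacent to the overlap, and these are exactly the edges removed in case (iv) of $\varphi$; the $\tau$ operation reintroduces boundary edges of $\calg_1$, $\calg_2$ but never an interior edge of $\calg$. Thus within $\calg$ the matching is entirely $\eta_1$ on $\calg_1$ and $\eta_2$ on $\calg_2$, and the conclusion for this case follows verbatim from Lemma \ref{eta}.

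The boundary cases $u=1$, $u'=1$, $v=d_3$, $v'=d_4$ are handled by the same principle, reading off from the definition of $\psi$ in section \ref{sect 71} (and, when $u=v$, from section on $u=v$) which of the $\tau$ and $\eta$ terms survive. In each such case the overlap $\calg$ is matched either by an $\eta_i$ together with the appropriate restriction, or (when $u=1$ and $v'=d_4$) by $\eta_1$ alone on $\calg_1$ with $P_5$ supplying the matching on $\calg_2$ outside $\calg$; the complementarity of $(\eta_1,\eta_2)$ on $\calg$ guaranteed by Lemma \ref{eta} again yields both (a) and (b). For $u=v$ one additionally notes, as recorded in the text, that the triples $G_{u-1},G_u,G'_{u'+1}$ and $G'_{u'-1},G_u,G_{u+1}$ each form a straight piece, so the single overlap tile $G_u=G'_{u'}$ is matched by edges drawn from neighbouring tiles and the two north/south or east/west boundary edges are split complementarily between $P_1$ and $P_2$; neither diagonal interior edge of $\calg$ can appear.

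The main obstacle I expect is bookkeeping rather than conceptual: correctly matching each clause of the piecewise definition of $\psi$ with the correct surviving $\tau$- and $\eta$-terms, and confirming in every boundary case that the single edge potentially shared between $P_1$ and $P_2$---the would-be common edge on the overlap tile---is excluded by the complementarity built into Lemma \ref{eta}. Since all the genuine sign-function analysis has already been carried out in the proof of Lemma \ref{eta} (where the local configurations at $G_s,G_t,G'_{s'}$ were pinned down using Lemma \ref{rho2} and equation (\ref{star})), the present lemma should follow from Lemma \ref{eta} together with a direct reading of Figure \ref{tau}, without any new sign computation.
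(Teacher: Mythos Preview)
Your approach is correct and, for part (a), coincides with the paper's one-line argument: on the overlap the matchings are given by the completions $\eta_1,\eta_2$, which by construction use only boundary edges of $\calg$.

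For part (b) you take a genuinely different route. The paper reduces to the first tile $G_s$ of the overlap and then performs an explicit case analysis: it reads off from Figure~\ref{tau} exactly which edges $\tau_{u-1,1}(P_5)$ forces onto $G_s$ and which edges $\tau_{u-1,2}(P_5)$ forces onto $G'_{s'}$, and verifies in two subcases (zigzag versus straight for $\calg_1[s-1,s+1]$) that the resulting edges are distinct. You instead invoke the complementarity clause of Lemma~\ref{eta} directly: since $P_1|_\calg=\eta_1$ and $P_2|_\calg=\eta_2$, and Lemma~\ref{eta} asserts $\eta_1,\eta_2$ are complementary boundary-edge matchings of $\calg$, disjointness follows immediately. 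Your argument is shorter and avoids the case analysis entirely; the paper's argument is more self-contained in that it does not lean on the precise wording of Lemma~\ref{eta}, and it makes explicit that the $\tau$-boundary conditions, not just the abstract existence of $\eta_i$, are what determine the matching on the extremal overlap tiles.

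Two small corrections to your write-up. First, the glueing edges $a_5,a_6$ are not the interior edges $e_{s-1},e_t$ of $\calg_1$: by the definition of $\calg_5$, the gluing of $\calg_1[1,s-1]$ with $\ocalg_2[s'-1,1]$ happens along the north of $G_{s-1}$ when $G_s$ is east of $G_{s-1}$, whereas $e_{s-1}$ is the east edge in that case; this does not damage your argument since your actual point---that $\tau$ contributes no \emph{interior} edge of $\calg$---remains true. Second, for the single-tile overlap $u=v$ there are no interior edges of $\calg$ at all (your phrase ``diagonal interior edge'' is a slip), so (a) is vacuous there; and Lemma~\ref{eta} as stated applies only to $u\ne v$, so for (b) in the $u=v$ case you do need the direct check you sketched, namely that the $\tau$-operations place complementary boundary edges on the single tile---this is exactly the kind of inspection the paper performs.
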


\begin{proof}
 (a) This holds
  because, by definition, $\eta_i$ contains only boundary edges of $\calg.$
 
 (b) Using (a) and the fact that $\calg$ is a snake graph, it suffices to show that $P_1$ and $P_2$ do not have an edge of the first tile of $\calg$ in common. Recall that the first tile of $\calg$ is denoted by $G_s$ in $\calg_1$ and $G'_{s'}$ in $\calg_2$ and that $P_1$ is determined on $G_s$ by $\tau_{u-1,2} (P_5).$ We consider the first case of Figure \ref{tau}. The following two subcases are illustrated in Figure \ref{tau 1}. On the left hand side, $\calg_1[s-1,s+1]$ is a zigzag graph and $\calg_2[s'-1,s'+1]$ is straight. By definition of $\tau,$ $P_1$ contains the south edge of $G_s$  and therefore it also contains the north edge of $G_s$. Again by definition of $\tau$, $P_2$ contains the north edge of $G'_{s'-1}$. To 
 see this one needs to flip the rightmost picture in the first row of Figure \ref{tau} and relabel $j$ with $s'-1$ and $d$ with $1$. 
 Hence $P_2$ also contains the south edge $G'_{s'+1}.$ In particular, this shows (b) in this case.

\begin{figure}
\begin{center}
  \scalebox{0.8}{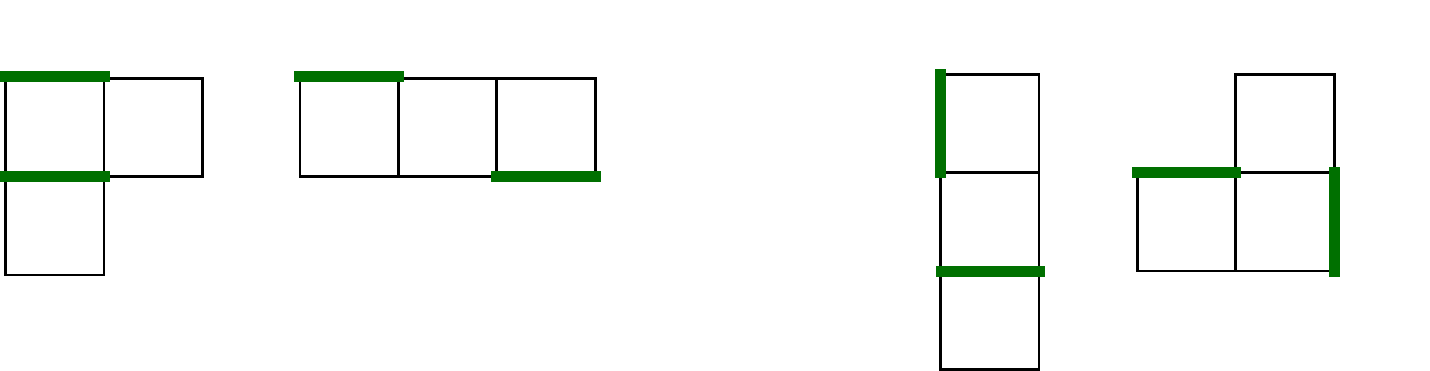}
\caption{Proof of Lemma \ref{image psi}}
\label{tau 1}
\end{center}
\end{figure}

On the right hand side of Figure \ref{tau 1}, $\calg_1[s-1,s+1]$ is straight and $\calg_2[s'-1,s'+1]$ is zigzag and again $P_1$ contains the south edge of $G_s$ by definition of $\tau$ but now it contains the west edge of $G_{s+1}$, whereas $P_2$ contains the north edge of $G'_{s'-1}$ and the east edge of $G'_{s'}.$ This shows (b) in this case.

In the remaining two cases of Figure \ref{tau}, the lemma can be shown by a similar argument.
\end{proof}

We return to proving that $\varphi\psi(P_5,P_6)=(P_5,P_6)$.
Using Lemma \ref{image psi}, we show first that $(P_1,P_2) =\psi(P_5,P_6)$ does not satisfy conditions (i),(ii),(iii) of the definition of the map $\varphi.$   Suppose first that $u>1,\ u'>1,\ v'>d_4$ and $v < d_3$, and $u\ne v$.

Lemma \ref{image psi} part (a) excludes cases 4 and 7 of Figure \ref{mu} and part (b) excludes cases 2, 3, and 6. In case 5 of the figure, the matching $P_1$ contains the south edge of $G_s$  and therefore the matching $P_5$ must be as in the first case of Figure \ref{tau}. But this implies that $P_2$ contains the north edge of $G'_{s'-1},$ a contradiction. Finally, in cases 1 and 8 of Figure \ref{mu} the matching $P_1$ contains the west edge of the tile $G_s$ which is not possible by the definition of $\tau.$ This shows that $(P_1,P_2)$ does not satisfy condition (i) of the definition of $\varphi.$

To show that  $(P_1,P_2)$ does not satisfy condition (ii), it suffices to observe that part (a) of Lemma \ref{image psi} excludes the first three cases of Figure \ref{figrho}, and part (b)  excludes the fourth case.

To show that $(P_1,P_2)$ does not satisfy condition (iii), we need to exclude all cases of Figure \ref{mu} but relabeling $s-1=t+1,\ s=t,\ s+1=t-1,\ s'-1=t'+1,\ s'+1=t'-1$ and rotating each graph by $180^{\circ}.$ Again, part (a) of Lemma \ref{image psi} excludes cases 4 and 7, part (b) excludes cases 2, 3 and 6. In case 5, $P_1$ contains the common edge of $G_t$ and $G_{t+1},$ which implies that the matching $P_6$ must be as in the second case of Figure \ref{tau}. But then $P_2$ does not contain the common edge of $G'_{t'}$ and $G'_{t'+1},$ a contradiction. Finally, in cases 1 and 8 of Figure \ref{mu}, $P_1$ would contain the east edge of $G_t$ which is not possible by the definition of $\tau.$

We have shown that $(P_1,P_2)=\psi(P_5,P_6)$ does not satisfy any of the conditions (i)-(iii) in the definition of $\varphi,$ hence condition (iv) applies and we have $\varphi \psi (P_5,P_6)=(P_5,P_6).$

If $u=v$ then the operation $\nu$ does not apply to $\psi(P_5,P_6)$, because, by definition of $\tau$, the vertices on the tiles $G_s$ and $G'_{s'}$ are matched by edges of the tiles $G_{u-1}, G_{u+1}$, $G'_{u'-1}$, $G'_{u'+1}$, but each of the pictures on the right hand side of Figure \ref{nu} contains at least one edge of $G_s$ or $G'_{s'}$.

In the case $u=1, u'=1, v=d_3$ and $v'=d_4$ the result follows by a similar argument. This shows that $\varphi \psi$ is the identity and thus both $\varphi$ and $\psi$ are bijections; which completes the proof of part (1) of Theorem \ref{bijections}.

Next we define the inverse map for part (2)
\begin{align*}
 \psi: \match (\graft_{s,\delta_3} (\calg_1,\calg_2)) \longrightarrow \match (\calg_1 \sqcup \calg_2)
\end{align*}

Recall that $\graft_{s,\delta_3} (\calg_1,\calg_2)$ is a pair $(\calg_3 \sqcup \calg_4, \ \calg_5 \sqcup \calg_6).$ Let $P_3, P_4, P_5, P_6$ be perfect matchings of $\calg_3, \calg_4, \calg_5, \calg_6,$ respectively.

Let $\psi(P_3,P_4)$ be
\begin{align*}
 &(P_3[1,s-1] \cup (\sigma^{-1}_{s,3})_1 \cup \eta_1 \cup P_4[1,d_4], \ \eta_2 \cup P_3(s+1,d_3] ) &\mbox{ if } s \neq d
\\
 &(P_3[1,s-1] \cup (\sigma^{-1}_{s,3})_1 \cup \eta_1, \ \eta_2 \cup P_3(s+1,d_3] ) & \mbox{ if } s = d
\end{align*}
where $\eta_1$ is the unique completion using only boundary edges of $\calg_1$ and $\eta_2$ is the unique completion using only boundary edges of the first tile of $\calg_2.$

Let $\psi(P_5, P_6)$ be
\begin{align*}
& (P_5 \cup \eta_1 \cup (\sigma^{-1}_{s,5})_1  \cup P_6[d'+2,d_6], \ P_6[1,d') \cup \eta_2 ) &\mbox{ if } s \neq d,\\
 & (P_5 \cup \eta_1, \ P_6 \cup \eta_2) & \mbox{ if } s =d,
\end{align*}
where $\eta_1$ is the unique completion using only boundary edges of $\calg_1$ and $\eta_2$ the unique completion using only boundary edges of the first tile of $\calg_2.$

Since the completions in the definition of $\psi$ are unique it follows immediately from the construction that $\varphi \psi$ is the identity and $\psi \varphi$ is the identity. This completes the proof of Theorem~\ref{bijections}.

{}


\begin{thebibliography}{}
%
%
%
%
\bibitem[BFZ]{BFZ} A. Berenstein, S.  Fomin and A. Zelevinsky, Cluster algebras. III. Upper bounds and double Bruhat cells. \emph{Duke Math. J.\/} {\bf 126}, (2005), no. 1, 1--52. 
%
%
%
%
%
%
%
%
%
%
%
%
%
%

\bibitem[FeShTu]{FeShTu} A. Felikson, M.  Shapiro and P. Tumarkin, Skew-symmetric cluster algebras of finite mutation type, preprint, {\tt arXiv:0811.1703}.
\bibitem[FeShTu2]{FeShTu2} A. Felikson, M.  Shapiro and P. Tumarkin, Cluster algebras of finite mutation type via unfoldings, {\em Int. Math. Res. Not.\/}  no 8, (2012), 1768--1804.
\bibitem[FeShTu3]{FeShTu3} A. Felikson, M.  Shapiro and P. Tumarkin, Cluster algebras and triangulated orbifolds, {\em Adv. Math.\/} {\bf 231}, 5, (2012),  2953--3002.

\bibitem[FG1] {FG1} V. Fock and A. Goncharov, Moduli spaces of local
  systems and higher Teichm\"uller theory.  {\em Publ. Math. Inst. Hautes
  \'Etudes Sci.}  No. 103  (2006), 1--211. 

\bibitem[FG2]{FG2} V. Fock and A. Goncharov, Cluster ensembles,
  quantization and the dilogarithm,  \emph{Ann. Sci. Ec. Norm. Super.\/} (4) 42, (2009), no. 6, 865--930.
%
%
\bibitem[FST]{FST} S. Fomin, M. Shapiro and D. Thurston, Cluster algebras and triangulated surfaces. Part I: Cluster complexes, \emph{Acta Math.} {\bf 201} (2008), 83-146. 


\bibitem[FT]{FT} S. Fomin and D. Thurston, Cluster algebras and triangulated surfaces. Part II: Lambda Lengths, preprint (2008), 
{\tt http://www.math.lsa.umich.edu/$\sim$fomin/Papers/cats2.ps}

\bibitem[FZ1]{FZ1} S. Fomin and A. Zelevinsky, Cluster algebras
I: Foundations, \emph{J. Amer. Math. Soc.} \bf 15 \rm (2002),
497--529.
 \bibitem[FZ2]{FZ2} S. Fomin and A. Zelevinsky, Cluster algebras II. Finite type classification. \emph{Invent. Math.\/} {\bf 154} (2003), no. 1, 63--121.
\bibitem[FZ4]{FZ4} S. Fomin and A. Zelevinsky, Cluster algebras IV: Coefficients, \emph{Compositio Mathematica} \bf 143 \rm (2007), 112-164.
%
%
%
%

\bibitem[GSV]{GSV} M. Gekhtman, M. Shapiro and A. Vainshtein,
Cluster algebras and Weil-Petersson forms, Duke Math. J.
{\bf 127} (2005), 291--311.
%
%
%
%
%
%
%
\bibitem[L1]{Lusztig1}
Lusztig, Canonical bases arising from quantized enveloping
algebras. \emph{J. Amer. Math. Soc} {\bf 3} (1990), 447--498.

\bibitem[L2]{Lusztig2} Lusztig, \emph{Introduction to quantum groups}, Progress in Mathematics 110,
Birkhauser, 1993.


\bibitem[MS]{MS} G. Musiker and R. Schiffler, Cluster expansion formulas and
perfect matchings, \emph{J. Algebraic Combin.} {\bf 32} (2010), no. 2, 187--209.

\bibitem[MSW]{MSW} G. Musiker, R. Schiffler and L. Williams, Positivity for cluster algebras from surfaces,  
\emph{Adv. Math.}  {\bf 227}, (2011), 2241--2308.

\bibitem[MSW2]{MSW2} G. Musiker, R. Schiffler and L. Williams, Bases for cluster algebras from surfaces,  
to appear in \emph{Compos. Math.\/}

\bibitem[MW]{MW} G. Musiker and L. Williams, Matrix formulae and skein relations for cluster algebras from surfaces, preprint, {\tt arXiv:1108.3382}.
%
%
%
%
%
%
%
%
\bibitem[Pr]{Propp} J. Propp,  The combinatorics of frieze patterns and Markoff numbers, preprint (2005), {\tt 
arXiv:math.CO/0511633}
%
\bibitem[S2]{S2} R. Schiffler, A cluster expansion formula ($A_n$
  case), Electron. J. Combin. 15 (2008), \#R64 1.

\bibitem[S3]{S3} R. Schiffler, On cluster algebras arising from
unpunctured surfaces II, 
\emph{Adv. Math.} {\bf 223}, (2010), 1885--1923.  

\bibitem[ST]{ST} R. Schiffler and H. Thomas, On cluster algebras arising from unpunctured surfaces, \emph{Int. Math. Res. Not.\/} no. 17, (2009), 3160--3189.
%
%
%
%
%

\end{thebibliography}
\end{document}